\newtheorem{theorem}{Theorem}[section]
\newtheorem{proposition}[theorem]{Proposition}
\newtheorem{example}[theorem]{Example}
\newtheorem{corollary}[theorem]{Corollary}
\newtheorem{lemma}[theorem]{Lemma}
\newtheorem{remark}[theorem]{Remark}
\title[Self-similar actions]{A correspondence between a class of monoids  and self-similar group actions II}
\author{M.~V.~Lawson}
\address{Department of Mathematics
and the
Maxwell Institute for Mathematical Sciences,
Heriot-Watt University,
Riccarton,
Edinburgh~EH14~4AS,
UK}
\email{markl@ma.hw.ac.uk}
\author{A.~R.~Wallis}
\address{Department of Mathematics
and the
Maxwell Institute for Mathematical Sciences,
Heriot-Watt University,
Riccarton,
Edinburgh~EH14~4AS,
UK}
\email{arw5@hw.ac.uk}
\dedicatory{Dedicated to the memory of David Rees}
\begin{document}
\begin{abstract} 
The first author showed in a previous paper that 
there is a correspondence between self-similar group actions and a class of left cancellative monoids called left Rees monoids. 
These monoids can be constructed either directly from the action using  Zappa-Sz\'ep products,
a construction that ultimately goes back to Perrot,
or as left cancellative tensor monoids from the covering bimodule, 
utilizing a construction due to Nekrashevych,
In this paper, we generalize the tensor monoid construction to arbitrary bimodules.
We call the monoids that arise in this way Levi monoids and show that they are precisely the equidivisible monoids equipped with length functions.
Left Rees monoids are then just the left cancellative Levi monoids.
We single out the class of irreducible Levi monoids and prove that they are determined by an isomorphism
between two divisors of its group of units.
The irreducible Rees monoids are thereby shown to be determined by a partial automorphism of their group of units;
this result turns out to be signficant since it connects irreducible Rees monoids directly with HNN extensions.
In fact, the universal group of an irreducible Rees monoid is an HNN extension of the group of units by a single stable letter
and every such HNN extension arises in this way.\\

\noindent
{\em 2000 AMS Subject Classification:} 20M10, 20M50.
\end{abstract}

\maketitle

\thanks{The first author's research was partially supported by an EPSRC grant (EP/I033203/1)
and the second by an EPSRC Doctoral Training Account (EP/P504945/1). 
Some of the material in this paper appeared in the second author's PhD thesis \cite{AW}.}

\section{Introduction}\setcounter{theorem}{0}

The theory of self-similar groups has two aspects: a group-theoretical and a monoid-theoretical.
The group-theoretical is well-known, being the subject of the 2005 book by Nekrashevych \cite{N1},
and originating in the 1980's.
The monoid-theoretical is less well-known.
The first author showed \cite{Lawson2008a} that self-similar groups were also defined 
in the 1972 thesis of J.-F.~Perrot \cite{P1} (see also \cite{P2}).
They arose as part of a generalization of the theory of polycyclic inverse monoids of \cite{NP}
along the lines of David Rees's 
pioneering paper \cite{Rees}.\footnote{Perrot's theory 
can also be viewed as a special case of a later one developed by McAlister \cite{McAlister74}.}
The starting point for this paper are correspondences, established in \cite{Lawson2008a}, between three classes of mathematical structures:
\begin{enumerate}
\item Self-similar group actions defined in full generality without the assumption that the action be faithful.
\item A class of left cancellative monoids, called left Rees monoids.
\item A class of $0$-bisimple inverse monoids with zero.
\end{enumerate}
This paper will deal with the ramifications and generalizations of these characterizations focusing on the first two.

We shall rely on basic notions from semigroup theory.
Our references for this are \cite{Howie,Lallement2}.
In particular, we recall the definitions of Green's relations in a monoid $S$.
Define $a \, \mathscr{L} \, b$ if and only if $Sa = Sb$,
$a \, \mathscr{R} \, b$ if and only if $aS = bS$
and 
$a \, \mathscr{J} \, b$ if and only if $SaS = SbS$.
The relation $\mathscr{H} = \mathscr{L} \cap \mathscr{R}$.
It can be proved that $\mathscr{L} \circ \mathscr{R} = \mathscr{R} \circ \mathscr{L}$;
define $\mathscr{D} = \mathscr{L} \circ \mathscr{R}$.
If $\mathscr{K}$ is one of Green's relations then $K_{a}$ is the $\mathscr{K}$-class containing $a$.
A semigroup is said to be {\em right stable} if $a \, \mathscr{J} \, ab$ implies that $a \, \mathscr{R} \, ab$.
We define {\em left stability} dually, and a semigroup that is both left and right stable is said to be {\em stable}.
See \cite{Lallement2} for more on this notion.\\

\noindent
{\bf Acknowledgements } This paper was inspired by a few pages in the first edition of Cohn's book \cite{Cohn}.
There he develops a theory of group embeddings of a class of cancellative monoids he calls `rigid'.
The theory reminded us of Bass-Serre theory but defined for cancellative monoids rather than groups and led to the work in Sections~4 and 5.
We would also like to thank Alan Cain and Stuart Margolis for some useful discussions.

As we were putting the finishing touches to this paper, we learnt of the sad news that Prof David Rees FRS had died on 16th August 2013.
Although he only wrote a few papers on semigroup theory, his main contributions were in commutative algebra,
they have proved extremely influential.
The ideas developed in this paper are just one example of that influence.

\section{Levi monoids}

A {\em length function} on a monoid $S$ is a homomorphism $\lambda \colon S \rightarrow \mathbb{N}$ to the additive monoid
of natural numbers such that $\lambda^{-1}(0)$ is the group of units of $S$.

\begin{lemma}\label{le: Greens_relations} 
Let $S$ be a monoid with group of units $G$ equipped with a length-function $\lambda$.
\begin{enumerate}

\item If $aS \subseteq bS$ or $Sa \subseteq Sb$ or  $SaS \subseteq SbS$ then $\lambda (a) \geq \lambda (b)$.

\item $a \, \mathscr{L} \, b \Leftrightarrow Ga = Gb$.

\item $a \, \mathscr{R} \, b \Leftrightarrow aG = bG$.

\item $a \, \mathscr{J} \, b \Leftrightarrow GaG = GbG$.

\item $\mathscr{D} = \mathscr{J}$.

\item $S$ is stable.

\end{enumerate}
\end{lemma}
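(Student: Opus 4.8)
The plan is to let the length function do all the work: convert ideal inclusions into inequalities for $\lambda$, and observe that an element lies in $G$ exactly when it has length $0$, since $S$ need not be cancellative and $\lambda$ is essentially the only tool available for ``dividing''. Part (1) is immediate: if $aS \subseteq bS$ then $a = a1 \in aS$ gives $a = bs$ for some $s \in S$, so $\lambda(a) = \lambda(b) + \lambda(s) \geq \lambda(b)$; the cases $Sa \subseteq Sb$ and $SaS \subseteq SbS$ are identical, writing $a = sb$ and $a = sbt$ respectively.

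For (2)--(4) the backward implications are trivial, since $G \subseteq S$ and multiplying an ideal by a unit leaves it unchanged (e.g.\ $a = gb$ with $g \in G$ yields $Sa = Sgb = Sb$). For the forward implication of (2): from $a \, \mathscr{L} \, b$ we have $Sa = Sb$, so applying part (1) to $Sa \subseteq Sb$ and to $Sb \subseteq Sa$ gives $\lambda(a) = \lambda(b)$; writing $a = ub$ with $u \in S$ then forces $\lambda(u) = 0$, so $u \in G$ and $Ga = Gb$. Parts (3) and (4) are the exact mirror and ternary analogues: from $aS = bS$ write $a = bu$ and use part (1) to get $\lambda(u) = 0$; from $SaS = SbS$ write $a = ubv$ and use part (1) to get $\lambda(u) = \lambda(v) = 0$, hence $GaG = GbG$.

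For (5), the inclusion $\mathscr{D} \subseteq \mathscr{J}$ always holds. Conversely, if $a \, \mathscr{J} \, b$ then by (4) $GaG = GbG$, so $a = ubv$ for some $u, v \in G$. Put $c = ub$. Then $a = cv$ with $v$ a unit, so $aG = cG$ and $a \, \mathscr{R} \, c$ by (3); and $c = ub$ with $u$ a unit, so $Gc = Gb$ and $c \, \mathscr{L} \, b$ by (2). Hence $(a,b) \in \mathscr{R} \circ \mathscr{L} = \mathscr{D}$.

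For (6), to prove right stability suppose $a \, \mathscr{J} \, ab$; then $SaS \subseteq S(ab)S$, so part (1) gives $\lambda(a) \geq \lambda(ab) = \lambda(a) + \lambda(b)$, forcing $\lambda(b) = 0$, i.e.\ $b \in G$. Then $ab \in aS$ and $a = (ab)b^{-1} \in abS$, so $aS = abS$ and $a \, \mathscr{R} \, ab$. Left stability is the mirror argument, deducing $b \in G$ from $a \, \mathscr{J} \, ba$ and concluding $Sa = Sba$. I do not expect a genuine obstacle here; the only points that need care are that every ``cancellation'' must be routed through $\lambda$ or through multiplication by honest units (as $S$ is not assumed cancellative), and the small interpolation trick $c = ub$ in part (5), which is the one step that is not pure bookkeeping.
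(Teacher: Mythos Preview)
Your proof is correct and follows essentially the same line as the paper's: both route every ``cancellation'' through the length function to force the relevant cofactors to be units, and both deduce (5) and (6) from (2)--(4). The only cosmetic differences are that the paper in (2) writes $a = xya$ directly (rather than first applying (1) twice), and it leaves the interpolation in (5) implicit while you spell out $c = ub$.
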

\begin{proof} (1) By assumption, $a = bs$ for some $s$.
Thus $\lambda (a) = \lambda (b) + \lambda (s)$ and so $\lambda (a) \geq \lambda (b)$.
The other cases are proved similarly.

(2) Suppose that  $a \, \mathscr{L} \, b$.
Then $a = xb$ and $b = ya$.
In particular, $a = xb = xya$.
Thus $\lambda (a) = \lambda (x) + \lambda (y) + \lambda (a)$.
It follows that $\lambda (x) = \lambda (y) = 0$ and so both $x$ and $y$ are invertible.
The proof of the converse is immediate.

The proofs of (3) and (4) are similar to the proof of (2).

The proof of (5) follows immediately from (2), (3) and (4).

To prove (6), suppose that $a \, \mathscr{J} \, ab$. 
Then $a = gabh$ for some $g,h \in G$ by (4) above.
It follows that $\lambda (a) = \lambda (a) + \lambda (b)$.
Thus $\lambda (b) = 0$ and so $b$ is invertible.
By (3) above, we have that $a \, \mathscr{R} \, ab$;
the dual case follows by a similar argument.
\end{proof}

Let $S$ be a monoid.
It  is said to be {\em equidivisible} if for all $a,b,c,d \in S$
we have that $ab = cd$ implies $a = cu$ and $d = ub$ for some $u \in S$ or
$c = av$ and $b = vd$ for some $v \in S$.
See \cite{MS} for more information.
An {\em atom} in $S$ is a noninvertible element $a$ such that if $a = bc$ then either $b$ or $c$ is invertible.

We define a {\em Levi monoid} to be an equidivisible monoid equipped with a length function which contains at least one noninvertible element
where the last condition is there simply to exclude groups.

\begin{example}{\em  The most natural examples of equidivisible monoids with length functions are the free monoids.
A {\em free monoid} $X^{\ast}$ on a non-empty set $X$, called the set of {\em letters}, 
consists of all finite sequences of elements of $X$ called {\em strings}, 
including the empty string $\varepsilon$, which we often denote by 1, 
with multiplication given by {\em concatenation}
of strings. 
The {\em length} $|x|$ of a string $x$ is the total number of letters that occur in it.
We denote the set of all strings of length $n$ by $X^{n}$.
If $x = yz$ then $y$ is called a {\em prefix} of $x$.
We write $x \preceq y$ in this case. 
The relation  $\preceq$ is called the {\em prefix ordering}.}
\end{example}

The following is proved in \cite{Lallement2} and attributed to F.~W.~Levi and is the source of our terminology.

\begin{theorem} 
A monoid $S$ is free if and only if it is a Levi monoid with a trivial group of units.
\end{theorem}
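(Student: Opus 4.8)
The plan is to prove the two implications by hand: the direction ``free $\Rightarrow$ Levi with trivial units'' is routine, while the converse rests on a uniqueness-of-factorization argument in which equidivisibility does the work normally done by cancellativity.

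For the easy direction, let $X^{\ast}$ be a free monoid on a non-empty set $X$, as in the Example. The word-length map $\lambda(x) = |x|$ is a homomorphism $X^{\ast} \to \mathbb{N}$, and $\lambda^{-1}(0) = \{\varepsilon\}$ since $yz = \varepsilon$ forces $|y| = |z| = 0$; hence $\lambda$ is a length function and the group of units is trivial. Any letter $x \in X$ has $|x| = 1$, so $X^{\ast}$ contains a noninvertible element. Finally, $X^{\ast}$ is equidivisible: this is the classical ``Levi lemma'' for strings, immediate from the description of $X^{\ast}$ via the prefix ordering, namely that $yz = y'z'$ forces one of $y,y'$ to be a prefix of the other, say $y' = yu$, and then $z = uz'$.

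For the converse, let $S$ be a Levi monoid with trivial group of units and let $X$ be its set of atoms. I would first show, by induction on $\lambda(a)$, that every element of $S$ is a product of atoms: if $\lambda(a) = 0$ then $a$ is a unit, hence $a = 1$, the empty product; if $\lambda(a) \geq 1$ then $a$ is noninvertible, so either $a$ is an atom, or $a = bc$ with $b,c$ both noninvertible, in which case $\lambda(b),\lambda(c) \geq 1$ and $\lambda(b),\lambda(c) < \lambda(b)+\lambda(c) = \lambda(a)$, so $b$ and $c$ are products of atoms by induction. In particular $X \neq \emptyset$, because $S$ has a noninvertible element by hypothesis.

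The heart of the matter is uniqueness: if $a_1 \cdots a_m = b_1 \cdots b_n$ with all $a_i, b_j$ atoms, then $m = n$ and $a_i = b_i$ for all $i$. I would induct on the length of the common value $w$. If $\lambda(w) = 0$ then $w = 1$, and since a nonempty product of atoms has strictly positive length, $m = n = 0$. If $\lambda(w) \geq 1$ then $m,n \geq 1$, and applying equidivisibility to $a_1 \cdot (a_2 \cdots a_m) = b_1 \cdot (b_2 \cdots b_n)$ yields, up to interchanging the two families, an element $u$ with $a_1 = b_1 u$ and $b_2 \cdots b_n = u(a_2 \cdots a_m)$. Since $a_1$ is an atom and $b_1$ is not a unit, $u$ must be a unit, hence $u = 1$ as the units are trivial; so $a_1 = b_1$ and $b_2 \cdots b_n = a_2 \cdots a_m$. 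This last word has length $\lambda(w) - \lambda(a_1) < \lambda(w)$, so the inductive hypothesis completes the step. The map $X^{\ast} \to S$ sending a string of atoms to its product in $S$ is then a monoid homomorphism, surjective by the first claim and injective by the second, so $S \cong X^{\ast}$ is free. I expect the only delicate point to be this uniqueness induction, specifically the observation that $a_1$ being an atom, together with triviality of the units, forces the mediating element produced by equidivisibility to equal the identity — which is exactly what lets one ``cancel'' $a_1$ without assuming $S$ is cancellative.
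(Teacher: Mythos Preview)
Your proof is correct. The paper does not actually prove this theorem inline --- it cites Lallement --- but your argument is exactly the trivial-units specialization of the machinery the paper develops immediately afterwards: your existence step is Lemma~\ref{le: bradbury}(4), and your uniqueness induction is precisely Lemma~\ref{le: uniqueness} with all the interleaving units $g_i$ forced to equal $1$.
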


\subsection{Normalized length functions}

We begin with a simple result.

\begin{lemma} Let $S$ be a Levi monoid.
If $\lambda (a) = 1$ then $a$ is an atom.
\end{lemma}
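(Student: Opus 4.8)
The plan is to use only the defining property of the length function, namely that $\lambda \colon S \to (\mathbb{N},+)$ is a homomorphism with $\lambda^{-1}(0)$ equal to the group of units $G$; equidivisibility plays no role here, so it suffices to work with the ``length function'' half of the Levi hypothesis.

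First I would check that $a$ is noninvertible. This is immediate: since $\lambda(a) = 1 \neq 0$, the element $a$ does not lie in $\lambda^{-1}(0) = G$, so it is not a unit.

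Next I would verify the factorization condition in the definition of an atom. Suppose $a = bc$ with $b, c \in S$. Applying the homomorphism $\lambda$ gives $1 = \lambda(a) = \lambda(b) + \lambda(c)$, an equation in $\mathbb{N}$. Since natural numbers are non-negative, one of $\lambda(b), \lambda(c)$ must be $0$ and the other $1$. If $\lambda(b) = 0$ then $b \in G$ is invertible; if $\lambda(c) = 0$ then $c \in G$ is invertible. In either case one of the two factors is a unit, which is exactly what is required. Hence $a$ is an atom.

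There is essentially no obstacle: the whole argument is a one-line computation with $\lambda$. The only point worth keeping in mind is purely bookkeeping, namely that being an atom bundles together two separate requirements — noninvertibility and the ``every factorization is trivial'' property — so both need to be recorded, even though each drops out instantly from the values of $\lambda$.
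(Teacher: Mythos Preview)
Your argument is correct and essentially identical to the paper's proof: both reduce to the additive equation $1 = \lambda(b) + \lambda(c)$ in $\mathbb{N}$ and conclude that one factor has length $0$, hence is a unit. The only difference is that you explicitly record the noninvertibility of $a$, which the paper leaves implicit.
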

\begin{proof} Suppose that $a = bc$.
Then $1 = \lambda (b) + \lambda (c)$.
It follows that $\lambda (b) = 0$ or $\lambda (c) = 0$ and so either $b$ or $c$ is invertible.
\end{proof}

The converse to the above result is not, however, true in general.

\begin{example} 
{\em Let $S = (a + b)^{\ast}$ be the free monoid on two generators.
Let $\lambda (a) = m$ and $\lambda (b) = n$ where $m$  and $n$ are any fixed non-zero natural numbers.
Then we may extend $\lambda$ to the whole of $S$ to obtain a length function.
This length function will have the property that $\lambda^{-1}(1)$ only consists of atoms when $m = 1 = n$ 
which corresponds to the case where $\lambda$ is the usual length function on the free monoid.}
\end{example}

A length function $\lambda$ on a Levi monoid that satisfies the additional condition that 
$\lambda (a) = 1$ if and only if $a$ is an atom is said to be {\em normalized}.
In this section, we shall prove that every Levi monoid has exactly one normalized length function.

\begin{lemma}\label{le: bradbury} Let $S$ be a monoid with group of units $G$ equipped with a length function.
\begin{enumerate}

\item $aS = S$ if and only if $a$ is invertible.

\item The element $a$ is an atom if and only if $aS$ is a maximal proper principal right ideal if and only if $Sa$
is a maximal proper principal left ideal.

\item Each noninvertible element of $S$ is contained in a maximal proper principal right ideal.

\item Each noninvertible element of $S$ can be written as a product of a finite number of atoms.

\item Let $X$ be a transversal of the generators of the maximal proper principal right ideals.
Then $S = \langle X \rangle G$ where $\langle X \rangle$ is the submonoid generated by $X$.
 
\end{enumerate}
\end{lemma}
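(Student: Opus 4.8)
The plan is to prove the five parts in order, using the length function repeatedly as the engine that forces finiteness and makes "proper" behave well.

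For (1), if $a$ is invertible then obviously $aS = S$; conversely if $aS = S$ then $ab = 1$ for some $b$, so $\lambda(a) + \lambda(b) = 0$, hence $\lambda(a) = 0$ and $a \in G$. For (2), suppose $a$ is an atom and $aS \subseteq bS \subseteq S$ with $bS$ proper. Then $a = bc$; since $bS \neq S$, part (1) says $b$ is not invertible, so by the definition of atom $c$ must be invertible, giving $aS = bS$. This shows $aS$ is maximal proper. Conversely, if $aS$ is maximal proper and $a = bc$, then $aS \subseteq bS \subseteq S$; either $bS = S$, so $b$ is invertible by (1), or $bS = aS$ by maximality, whence $b = ad$ and then $a = bc = adc$, so $\lambda(d) = \lambda(c) = 0$ and $c$ is invertible. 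Either way $a$ is an atom. The left-ideal statement is dual. Part (3) is the one step where equidivisibility or at least an induction on length does real work: given a noninvertible $a$, the set of $\lambda$-values of elements $b$ with $a \in bS$ and $bS$ proper is a nonempty (it contains $\lambda(a)$, since $a \in aS$ and $aS \neq S$ by (1)) subset of the positive integers bounded above by $\lambda(a)$ and bounded below by $1$; pick $b$ realizing the minimum such value. If $bS \subseteq b'S \subseteq S$ with $b'S$ proper, then $b = b'c$ with $b'$ noninvertible, and $\lambda(b') \leq \lambda(b)$ with $a \in b'S$, so by minimality $\lambda(b') = \lambda(b)$, forcing $c$ invertible and $bS = b'S$; thus $bS$ is maximal proper and contains $a$.

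Part (4) follows by induction on $\lambda(a)$: if $a$ is noninvertible, by (3) choose a maximal proper $bS \ni a$, which by (2) means $b$ is an atom; write $a = bc$, so $\lambda(c) = \lambda(a) - 1 < \lambda(a)$. If $c$ is invertible then $a = b(cb^{-1}b)$ — more simply, $a = (bcb^{-1})b$ wait; cleanly: $bc$ with $c \in G$ means $a$ is an atom times a unit, and an atom times a unit is again an atom, so we are done; if $c$ is noninvertible, by induction $c$ is a product of atoms and hence so is $a = bc$. For (5), by definition every maximal proper principal right ideal is $aG \cdot S$-generated, i.e. has the form $aS$ for some atom $a$, and two atoms generate the same right ideal iff they are $\mathscr{R}$-equivalent iff (by Lemma \ref{le: Greens_relations}(3)) they differ by a right unit; choosing one representative atom from each such class gives $X$. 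Now take any $s \in S$. If $s$ is invertible, $s \in \langle X \rangle G$ trivially. Otherwise, by (4) write $s = a_1 \cdots a_k$ with each $a_i$ an atom; replacing $a_1$ by its transversal representative $x_1 \in X$ introduces a unit on the right of $x_1$, which can be absorbed into $a_2$ (a unit times an atom is an atom), and iterating down the product we rewrite $s = x_1 x_2 \cdots x_{k-1} (x_k g)$ with $x_i \in X$ and $g \in G$, so $s \in \langle X \rangle G$.

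The main obstacle is part (3): one must rule out an infinite strictly ascending chain of proper principal right ideals above a given element, and the only tool available is that the generators' $\lambda$-values strictly decrease along such a chain while staying $\geq 1$. Once (3) is in hand, (4) and (5) are routine inductions, and (1), (2) are direct computations with $\lambda$.
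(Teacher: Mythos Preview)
Your proof is correct and follows essentially the same approach as the paper's. One minor slip: in part (4) you write $\lambda(c) = \lambda(a) - 1$, but the length function is not assumed normalized here, so an atom $b$ may have $\lambda(b) > 1$; what you actually have (and all you need) is $\lambda(c) = \lambda(a) - \lambda(b) < \lambda(a)$, which is enough for the induction. Otherwise the only differences are cosmetic: your part (3) picks a $b$ of minimal $\lambda$-value where the paper climbs an ascending chain of right ideals, and in part (5) you verify directly that a unit times an atom is an atom, whereas the paper obtains this by observing $S(gx) = Sx$ and invoking the left-ideal half of (2).
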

\begin{proof} (1) If $a$ is invertible then $aS = S$.
Conversely, suppose that $aS = S$. Then $1 = as$ for some $s$.
Hence $\lambda (as) = 0$.
It follows that $\lambda (a) = 0$ and so $a$ is invertible.

(2) We prove the claim for principal right ideals; the other claim follows by symmetry.
Let $aS$ be a maximal proper principal right ideal.
Suppose that $a = bc$.
Then $aS \subseteq bS$.
By assumption either $aS = bS$ or $bS = S$.
By Lemma~\ref{le: Greens_relations},
the former implies that $a = bg$ for some invertible element $g$.
Thus $bg = bc$.
But then $0 = \lambda (g) = \lambda (c)$ and so $c$ is invertible.
By (1) above,
the latter implies that $b$ is invertible.
We have therefore proved that $a$ is an atom.
Conversely, let $a$ be an atom.
Suppose that $aS \subseteq bS$.
Then $a = bs$ for some $s$.
If $b$ is invertible then $bS = S$ by (1) above.
If $s$ is invertible then $aS = bS$.
We have therefore proved that $aS$ is a maximal proper principal right ideal.

(3) Let $s \in S$ be any noninvertible element.
If $sS$ is a maximal proper principal right ideal then we are done by (2) above.
If not then $sS \subset a_{1}S$ for some $a_{1} \in S$.
We may write $s = a_{1}b$.
If  $\lambda (s) = \lambda (a_{1})$ then $b$ would be invertible and we would have $sS = a_{1}S$.
It follows that $\lambda (s) > \lambda (a_{1})$.
If $a_{1}S$ is a maximal proper principal right ideal then we are done,
or the process continues with $a_{1}$ instead of $a$.
But the length function now tells us that this process must conclude in a finite number of steps,
and this can only happen when we reach a maximal proper principal right ideal containing $s$.

(4) Let $s \in S$ be any noninvertible element.
By (2) and (3), we may write $s = a_{1}s_{1}$ where $a_{1}$ is an atom and $\lambda (s) > \lambda (s_{1})$.
The process may now be repeated, the length function guaranteeing that the process terminates.

(5) Let $s = a_{1} \ldots a_{n}$ be a representation of $s$ as a product of atoms.
For each atom $a$ there exists $x \in X$ and $g \in G$ such that $a = xg$.
Therefore we may write $a_{i} = x_{i}g_{i}$ for some $x_{i} \in X$ and $g_{i} \in G$.
Thus $s = (x_{1}g_{1}) \ldots (x_{n}g_{n})$.
We now come to the key observation.
Let $g \in G$ and $x \in X$ be arbitrary.
Then $Sgx = Sx$ is a maximal proper principal left ideal and so $gx$ is an atom.
It follows that $gx = x'g'$ for some $x' \in X$ and $g' \in G$ by assumption.
Applying this representation in turn from left to right,
we may write $s = y_{1} \ldots y_{n}g$ for some $y_{i} \in X$ and $g \in G$.
\end{proof}

Let $aS \subseteq bS$.
Denote by $[aS,bS]$ the set of all principal right ideals $cS$ such that $aS \subseteq cS \subseteq bS$.

\begin{lemma}\label{le: right_ideals} Let $S$ be a Levi monoid.
\begin{enumerate}

\item The set  $[aS,bS]$  is linearly ordered and finite.

\item $a[bS,S] = [abS,aS]$.

\item $[abS,S] = a[bS,S] \cup [aS,S]$.

\item If $bS \subseteq cS \subseteq S$ and $bS \subseteq dS \subseteq S$
are such that $acS = adS$ then $cS = dS$.

\item $\lambda (a) \geq \left| [aS,S]  \right| - 1$ for any $a \in S$.

\end{enumerate}
\end{lemma}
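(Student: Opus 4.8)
The plan is to prove (1) first, since its conclusion that $[aS,bS]$ is a chain is exactly what drives (3), (4) and (5). For the linear ordering, given $cS,dS \in [aS,bS]$ I would write $a = cs = dt$ and feed $cs = dt$ into equidivisibility: this returns either $c = du$ (giving $cS \subseteq dS$) or $d = cv$ (giving $dS \subseteq cS$). For finiteness, note that a proper inclusion $cS \subsetneq dS$ means $c = ds$ with $s$ noninvertible---else $cS = dS$ by Lemma~\ref{le: Greens_relations}---so $\lambda(c) \geq \lambda(d)+1$; hence $\lambda$ strictly decreases along the chain, which therefore has at most $\lambda(a)-\lambda(b)+1$ members. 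This ``each proper step lowers $\lambda$ by at least one'' observation will be reused in (5).

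Next I would handle (2) and deduce (3) formally. The inclusion $a[bS,S] \subseteq [abS,aS]$ is immediate: multiply $bS \subseteq cS \subseteq S$ by $a$. For the reverse, take $dS$ with $abS \subseteq dS \subseteq aS$, write $d = ae$ and $ab = aef$, and apply equidivisibility to $a\cdot b = a\cdot(ef)$; it yields a unit $u$ with $a = au$ and $b = u^{-1}ef$, or a unit $v$ with $a = av$ and $b = vef$. In the first case put $c = u^{-1}e$: then $b = cf$ so $bS \subseteq cS \subseteq S$, and $a = au$ gives $au^{-1} = a$, whence $acS = aeS = dS$; the second case is the same with $c = ve$ and $av = a$. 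Then (3) is purely formal: $[abS,aS] \cup [aS,S] \subseteq [abS,S]$ is clear, and conversely any $dS \in [abS,S]$ is comparable to $aS$ by (1), so it lies in $[abS,aS]$ or in $[aS,S]$; rewriting $[abS,aS] = a[bS,S]$ via (2) finishes it.

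Part (4) follows from the chain property: since $cS,dS \in [bS,S]$ I may assume $cS \subseteq dS$, say $c = dt$, and then $acS = adS$ forces $ad \in adtS$, so $\lambda(t) = 0$, $t$ is a unit, and $cS = dS$. Part (5) lists $[aS,S]$ as a strict chain $aS = c_0 S \subsetneq \cdots \subsetneq c_{n-1}S = S$ with $n = |[aS,S]|$; each step lowers $\lambda$ by at least one, and $c_{n-1}$ is a unit by Lemma~\ref{le: bradbury}, so $\lambda(a) \geq n-1$. I expect the only real obstacle to be the reverse inclusion in (2): a Levi monoid need not be left cancellative, so I cannot cancel $a$ from $ab = aef$ to compare $b$ with $e$ directly, and must instead extract the factorization from equidivisibility and exploit that the unit it produces satisfies $au = a$, hence also $au^{-1} = a$.
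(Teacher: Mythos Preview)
Your proposal is correct and follows essentially the same strategy as the paper: equidivisibility gives the chain in (1), the length function gives finiteness and the strict-drop estimate reused in (5), and (3), (4) are formal consequences. The one place you diverge is the reverse inclusion in (2): the paper applies equidivisibility directly to $ab = cx$ (with $c$ the generator of the intermediate ideal and $c = ay$), obtaining either $a = cu$, which forces $y$ invertible and hence $cS = aS$, or $c = av$ with $b = vx$, which immediately exhibits $vS \in [bS,S]$ with $a(vS) = cS$; you instead substitute $d = ae$ first and apply equidivisibility to $a\cdot b = a\cdot(ef)$, so both first factors are $a$ and the output is automatically a unit, after which you rebuild the preimage as $u^{-1}e$ or $ve$ using $au = a \Rightarrow au^{-1} = a$. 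Both routes are valid; the paper's is slightly more direct since it avoids the extra substitution and the $au = a \Rightarrow au^{-1} = a$ step, while yours makes explicit why left cancellativity is not needed.
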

\begin{proof} (1) Let $aS \subseteq xS \subseteq bS$ and let $aS \subseteq yS \subseteq bS$.
By assumption, $a \in xS \cap yS$. 
Thus $a = xu = yv$ for some $u,v \in S$.
By equidivisibility, there exists either a $w$ such that
$x = yw$ and $v = wu$ or a $z$ such that $y = xz$ and $u = zv$.
If the former, then $xS \subseteq yS$, and if the latter then $yS \subseteq xS$.
Thus the set  $[aS,bS]$ is linearly ordered. 

Let $aS \subseteq cS \subset dS \subseteq bS$.
Then $\lambda (c) \geq \lambda (d)$.
Let5 $c = ds$ for some $s \in S$.
If $\lambda (c) = \lambda (d)$ then $s$ is invertible and $cS = dS$.
Hence  $\lambda (c) > \lambda (d)$.
It is now immediate that  $[aS,bS]$ is  finite.

(2) Clearly, the lefthand side is contained in the righthand side.
We now show that the righthand side is contained in the lefthand side.
Let $abS \subseteq cS \subseteq aS$.
Then $ab = cx$ and $c = ay$.
Suppose first that $y$ is invertible.
Then $cS = aS$ and we are done.
In what follows, therefore, we shall assume that $y$ is not invertible.
By equidivisibility, the equation $ab = cx$ implies that either there is an element $u$ such that
$a = cu$ and $x = ub$ or an element $v$ such that $c = av$ and $b = vx$.
Suppose first that the former occurs.
Then $a = cu = ayu$.
Using the length function, we deduce that $\lambda (yu) = 0$ and so $y$ is invertible.
This contradicts our assumption.
It follows that  $c = av$ and $b = vx$.
Observe that $vS \in [bS,S]$ since $b = vx$.
But $a(vS) = avS = cS$, as required.

(3) Observe first that $abS \subseteq aS \subseteq S$.
Let $cS$ be an element of the lefthand side.
Since $[abS,S]$ is a linearly ordered set by (1) above, 
we have that $abS \subseteq cS \subseteq aS$ or $aS \subseteq cS \subseteq S$.
But by (1) above, in either case, we have that $cS$ belongs to the righthand side.
The reverse inclusion is immediate.

(4) We have that $cS$ and $dS$ are comparable.
Without loss of generality, we may assume that $cS \subseteq dS$.
But from $acS = adS$ we deduce that $\lambda (c) = \lambda (d)$.
It follows that $cS = dS$, as claimed.

(5) Suppose that  $\left| [aS,S] \right| = 1.$
This of course means that $aS = S$ which occurs if and only if $a$ is invertible by Lemma~\ref{le: bradbury}.
Thus the inequality holds when $a$ is invertible.

Suppose that  $\left| [aS,S] \right| = 2.$
Then $aS \subseteq S$ and there are no principal right ideals inbetween.
This happens if and only if $a$ is an atom by part (1) of Lemma~\ref{le: bradbury}.
By definition, $\lambda (a) \geq 1$ and so the inequality holds again.

Suppose that  $\left| [aS,S] \right|  = n \geq 3.$
Then $aS = a_{1}S \subset a_{2}S \subset \ldots \subset a_{n}S = S$.
We have that $a = a_{1} = a_{2}x_{2}$ where $x_{2}$ is not invertible;
$a_{2} = a_{3}x_{3}$ where $x_{3}$ is not invertible and so on.
We deduce that $a = a_{n}x_{n}x_{n-1} \ldots x_{2}$ where the $x_{i}$ are not invertible and $a_{n}$ is invertible.
It follows that $\lambda (a) = \lambda (x_{2}) + \ldots + \lambda (x_{n})$.
Thus $\lambda (a) \geq n-1$, as claimed.
\end{proof}

The above lemma provides all we need to prove the main result of this section.

\begin{proposition}\label{prop: normalized} Let $S$ be a Levi monoid.
\begin{enumerate}

\item Define the function $\nu \colon S \rightarrow \mathbb{N}$ by 
$$\nu (a) = \left|  [aS,S] \right| - 1$$
for all $a \in S$.
Then $\nu$ is a normalized length function on $S$.

\item The function $\nu$  is the only normalized length function on $S$.

\end{enumerate}
\end{proposition}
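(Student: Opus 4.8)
The plan is to treat the two parts separately, with part (1) carrying essentially all of the work and part (2) following quickly from the factorization results already established. For part (1) I would verify in turn that $\nu$ is a homomorphism into $\mathbb{N}$, that $\nu^{-1}(0)$ is the group of units $G$, and that $\nu (a) = 1$ precisely when $a$ is an atom. The last two are immediate: by Lemma~\ref{le: bradbury}(1) we have $aS = S$ if and only if $a$ is invertible, so $\nu (a) = 0$ iff $\left| [aS,S] \right| = 1$ iff $a \in G$; and $\nu (a) = 1$ iff $\left| [aS,S] \right| = 2$, which by Lemma~\ref{le: right_ideals}(1) together with Lemma~\ref{le: bradbury}(2) says exactly that $aS$ is a maximal proper principal right ideal, i.e. that $a$ is an atom. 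So the crux is additivity.

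For additivity I would compute $\left| [abS,S] \right|$. By Lemma~\ref{le: right_ideals}(3), $[abS,S] = a[bS,S] \cup [aS,S]$, and by Lemma~\ref{le: right_ideals}(2) we have $a[bS,S] = [abS,aS]$; hence $[abS,S] = [abS,aS] \cup [aS,S]$. Since any principal right ideal lying in both $[abS,aS]$ and $[aS,S]$ must equal $aS$, the two sets meet in exactly the singleton $\{ aS \}$, and therefore $\left| [abS,S] \right| = \left| [abS,aS] \right| + \left| [aS,S] \right| - 1$. Next I would argue that $cS \mapsto acS$ is a bijection from $[bS,S]$ onto $[abS,aS] = a[bS,S]$: surjectivity is the content of Lemma~\ref{le: right_ideals}(2), and injectivity is Lemma~\ref{le: right_ideals}(4). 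Hence $\left| [abS,aS] \right| = \left| [bS,S] \right|$, and combining the two displays gives $\left| [abS,S] \right| = \left| [bS,S] \right| + \left| [aS,S] \right| - 1$, which on subtracting $1$ from each count is exactly $\nu (ab) = \nu (a) + \nu (b)$. All the sets involved are finite by Lemma~\ref{le: right_ideals}(1), so the arithmetic is legitimate.

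For part (2), let $\mu$ be any normalized length function on $S$. Every invertible element has $\mu$-value $0$, and every atom has $\mu$-value $1$ by normalization; but by Lemma~\ref{le: bradbury}(4) every noninvertible element is a product of finitely many atoms, so these two facts determine $\mu$ on all of $S$. Since $\nu$ is a normalized length function by part (1), it follows that $\mu = \nu$, which proves uniqueness.

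The step I expect to be the main obstacle is the additivity argument in part (1), and within it the bookkeeping: checking that $cS \mapsto acS$ really is a bijection $[bS,S] \to [abS,aS]$ and that $[abS,aS] \cap [aS,S] = \{ aS \}$, so that the inclusion--exclusion count comes out exactly right. Once that is set up correctly, everything else is a direct appeal to Lemmas~\ref{le: bradbury} and~\ref{le: right_ideals}.
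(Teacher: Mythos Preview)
Your proposal is correct and follows essentially the same route as the paper: the paper's proof of (1) simply cites Lemma~\ref{le: right_ideals} for the homomorphism property and then notes the characterizations of $\nu^{-1}(0)$ and $\nu^{-1}(1)$, while you have (correctly) unpacked the inclusion--exclusion and bijection argument from parts (2)--(4) of that lemma that makes additivity go through. For (2) both you and the paper appeal to Lemma~\ref{le: bradbury}(4).
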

\begin{proof} (1)  It follows by Lemma~\ref{le: right_ideals} that $\nu$ is a homomorphism.
We have that $\nu (a) = 0$ if and only if $aS = S$ if and only if $a$ is invertible.
We have that $\nu (a) = 1$ if and only if $aS$ is a maximal proper principal right ideal if and only if $a$ is an atom. 

(2) This follows from part (4) of Lemma~\ref{le: bradbury}.
\end{proof}

\begin{remark}
{\em We shall assume from now on that the length function we work with on a Levi monoid is normalized.}
\end{remark}

\begin{remark}
{\em In Example~1.8 of Chapter~5 of \cite{Lallement2}, Lallement constructs a cancellative equidivisible monoid with a trivial group of units
that does not have a length function in our sense. 
This is an indication that the class of Levi monoids might be worth generalizing perhaps by using different kinds of length functions.}
\end{remark}

We have seen that every noninvertible element in a Levi monoid can be written as a product of atoms.
The following result describes what kind of uniqueness we can expect in such a product and provides a clue to the underlying structure of Levi monoids
that we shall pursue in the next section.

\begin{lemma}\label{le: uniqueness} Let $S$ be a Levi monoid.
 Suppose that 
$$x = a_{1} \ldots a_{m} = b_{1} \ldots b_{n}$$ 
where the $a_{i}$ and $b_{j}$ are atoms.

\begin{enumerate}

\item $m = n$.

\item There are invertible elements $g_{1}, \ldots, g_{n-1}$ such that
$$a_{1} = b_{1}g_{1}, \quad
a_{2} = g_{1}^{-1}b_{2}g_{2}, \quad
\ldots
\quad
a_{n} = g_{n-1}^{-1}b_{n}.$$ 

\end{enumerate}
\end{lemma}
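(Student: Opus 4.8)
The plan is to obtain (1) immediately from the length function and to prove (2) by induction on $n$, with equidivisibility doing the work at each stage.

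For (1): since we have agreed to work with the normalized length function on $S$, every atom has length $1$, and hence $m = \lambda (a_{1}) + \cdots + \lambda (a_{m}) = \lambda (x) = \lambda (b_{1}) + \cdots + \lambda (b_{n}) = n$.

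For (2) I would induct on $n$. The case $n = 1$ is vacuous: $a_{1} = x = b_{1}$ and the list of $g_{i}$ is empty. For the inductive step, write $x = a_{1}c = b_{1}d$ with $c = a_{2} \cdots a_{n}$ and $d = b_{2} \cdots b_{n}$, and apply equidivisibility to $a_{1}c = b_{1}d$. This gives either $a_{1} = b_{1}u$ and $d = uc$ for some $u$, or $b_{1} = a_{1}v$ and $c = vd$ for some $v$. In the first case $a_{1}$ is an atom and $b_{1}$ is a (non-invertible) atom, so $u$ must be invertible; in the second case the same reasoning forces $v$ to be invertible. Setting $g_{1} := u$ in the first case and $g_{1} := v^{-1}$ in the second, both cases collapse to the single conclusion that $g_{1} \in G$, that $a_{1} = b_{1}g_{1}$, and that $c = g_{1}^{-1}d$, that is,
\[
a_{2} \cdots a_{n} = (g_{1}^{-1}b_{2})\, b_{3} \cdots b_{n}.
\]
The one point that needs checking is that the right-hand side is again a product of atoms: $g_{1}^{-1}b_{2}$ is an atom because $S(g_{1}^{-1}b_{2}) = Sb_{2}$ is a maximal proper principal left ideal by part~(2) of Lemma~\ref{le: bradbury}, $g_{1}^{-1}$ being a unit. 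Both sides of the displayed equation are now products of $n-1$ atoms, so the induction hypothesis supplies invertible elements $g_{2}, \ldots, g_{n-1}$ with $a_{2} = (g_{1}^{-1}b_{2})g_{2}$, $a_{3} = g_{2}^{-1}b_{3}g_{3}$, $\ldots$, $a_{n} = g_{n-1}^{-1}b_{n}$; together with $a_{1} = b_{1}g_{1}$ this is exactly the asserted chain of identities.

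The only genuine obstacle here is bookkeeping: arranging the two equidivisibility alternatives so that they feed the same inductive step, and recognising that absorbing a unit into the leading atom of the tail keeps it an atom. Everything else is a direct appeal to the normalized length function and to Lemma~\ref{le: bradbury}.
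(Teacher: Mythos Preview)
Your argument is correct and is essentially the same as the paper's: both use equidivisibility to peel off the leading atoms, obtain a unit from the fact that atoms are involved, and then repeat on the shorter product. The only cosmetic difference is that you frame the repetition as a formal induction (and absorb the unit into the tail as $g_{1}^{-1}b_{2}$), whereas the paper writes it as an iterative process and keeps the unit on the $a$-side at each step.
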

\begin{proof} (1). This is immediate from the properties of normalized length functions.

(2). We bracket as follows
$$a_{1}(a_{2}  \ldots a_{m}) = b_{1}(b_{1} \ldots b_{m}).$$
By equidivisibility, 
$a_{1} = b_{1}u$ and $b_{2} \ldots b_{m} = u a_{2} \ldots a_{m}$ for some $u$ 
or 
$b_{1} = a_{1}v$ and $a_{2} \ldots a_{m} = vb_{2} \ldots b_{m}$ for some $v$.
In either case, $u$ and $v$ are invertible since both $a_{1}$ and $b_{1}$ are atoms using the length function.
Thus 
$a_{1} = b_{1}g_{1}$, where $v = g_{1}$ is invertible,
and 
$b_{2} \ldots b_{m} = g_{1}a_{2} \ldots a_{m}$.

We now repeat this procedure bracketing thus
$$b_{2}(b_{3} \ldots b_{m}) = g_{1}a_{2}(a_{3} \ldots a_{m}).$$
By the same argument as above, we get that
$g_{1}a_{2} = b_{2}g_{2}$ for some invertible element $g_{2}$
and 
$b_{3} \ldots b_{m} = g_{2}a_{3} \ldots a_{m}$.

The process continues and we obtain the result.
\end{proof}

Result (2) above may usefully be presented by means of the following {\em interleaving diagram}.
$$\spreaddiagramrows{1pc}
\spreaddiagramcolumns{1pc}
\diagram
& 
& \rto^{a_{2}} 
& \rto^{a_{3}}
&
& \ldots
& \rto^{a_{n-1}}
& \drto^{a_{n}}
&
\\
& \urto^{a_{1}} \drto_{b_{1}}  
& 
&
&
&
&
&
&
\\
& 
& \uuto^{g_{1}} \rto_{b_{2}} 
& \uuto^{g_{2}} \rto_{b_{3}}
& \uuto^{g_{3}} 
& \ldots
& \uuto^{g_{n-2}} \rto_{b_{n-1}}
& \uuto^{g_{n-1}} \urto_{b_{n}}
&
\enddiagram$$

\subsection{Tensor monoids}

In this section, we shall describe a procedure for constructing all Levi monoids that generalizes a construction to be found in \cite{N1}.
The proof of the following is immediate from the properties of normalized length functions.

\begin{lemma} Let $S$ be a Levi monoid.
Let $G$ be its group of units and $X$ the set of atoms.
Then for all $g,h \in G$ and $x \in X$ we have that $gxh \in X$.
\end{lemma}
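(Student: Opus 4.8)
The plan is to reduce everything to the characterization of atoms via the normalized length function. Recall that, by the standing assumption, the length function $\lambda$ on $S$ is normalized, so by Proposition~\ref{prop: normalized} an element $a \in S$ is an atom precisely when $\lambda (a) = 1$, and $\lambda^{-1}(0) = G$ by the definition of a length function. Given $g, h \in G$ and $x \in X$, I would simply compute, using that $\lambda$ is a homomorphism,
$$\lambda (gxh) = \lambda (g) + \lambda (x) + \lambda (h) = 0 + 1 + 0 = 1,$$
and conclude that $gxh$ is an atom. This is the whole argument in the length-function language; the only point that needs to be registered along the way is that $gxh$ is noninvertible rather than merely indecomposable, and the computation $\lambda (gxh) = 1 \neq 0$ supplies exactly that.

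If one prefers an argument that does not invoke the length function at all, I would instead work directly from the definition of an atom. First, $gxh$ is noninvertible: were it invertible, then $x = g^{-1}(gxh)h^{-1}$ would be a product of invertible elements, hence invertible, contradicting that $x$ is an atom. Second, suppose $gxh = bc$ for some $b, c \in S$. Then $x = (g^{-1}b)(ch^{-1})$, so since $x$ is an atom either $g^{-1}b \in G$ or $ch^{-1} \in G$; in the first case $b = g(g^{-1}b) \in G$ and in the second $c = (ch^{-1})h \in G$. Thus $gxh$ is an atom.

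I do not expect any genuine obstacle: the statement is immediate, as the authors indicate, and both routes above are short. The one place to be slightly careful is the bookkeeping that separates \emph{noninvertible} from \emph{indecomposable} in the definition of an atom — the length computation $\lambda(gxh)=1$, or equivalently the substitution $x = g^{-1}(gxh)h^{-1}$, settles the noninvertibility at once, so I would make sure that clause is explicitly addressed rather than left implicit.
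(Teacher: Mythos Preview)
Your proposal is correct and your first argument is exactly the approach the paper indicates: the proof is immediate from the normalized length function, since $\lambda(gxh) = \lambda(g) + \lambda(x) + \lambda(h) = 0 + 1 + 0 = 1$ forces $gxh$ to be an atom. Your alternative direct argument is also fine but unnecessary here.
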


In the light of the lemma above, we make the following definitions.
Let $X$ be any non-empty set and $G$ a group such that $G$ acts on $X$ both on the left and the right
in such a way that the two actions are conformable meaning that $(gx)h = g(xh)$ for all $g,h \in G$ and $x \in X$.
Then we say that $X$ is a {\em $(G,G)$-bimodule} or  {\em $G$-bimodule} or just a {\em bimodule (over the group $G$)}.

It follows that with every Levi monoid, we may associate a bimodule over its group of units called its {\em associated bimodule (of atoms)}.

Let $U$ be a $G$-bimodule and $V$ an $H$-bimodule.
A {\em morphism} from $U$ to $V$ is a pair $(\alpha,\beta)$ where $\alpha \colon G \rightarrow H$ is
a group homomorphism and $\beta \colon \: U \rightarrow V$ is a function such that
$\beta (g_{1}ug_{2}) = \alpha (g_{1})\beta (u) \alpha (g_{2})$ for all $g_{1},g_{2} \in G$ and $u \in U$.
If $U$ and $V$ are both bimodules over the same group $G$ then we shall usually require that $\alpha$ is the identity homomorphism.
{\em Isomorphisms}  between bimodules are defined in the obvious way.
A monoid homomorphism between Levi monoids is called {\em atom preserving} if it maps atoms to atoms.
The proof of the following is now immediate.

\begin{lemma}\label{le: functor1} The construction of the associated bimodule is a functor from the category of Levi monoids
and atom preserving monoid homomorphisms to the category of bimodules and their morphisms.
\end{lemma}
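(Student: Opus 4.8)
The plan is to verify the two functor axioms directly: that the construction assigns to each atom-preserving monoid homomorphism a bimodule morphism, and that this assignment respects identities and composition. The raw materials are already in place. By the preceding Lemma, the atoms $X$ of a Levi monoid $S$ form a bimodule over the group of units $G$, with the two actions being restrictions of multiplication in $S$; conformability is just associativity. So on objects the construction sends $S \mapsto (G,X)$, and there is nothing further to check there.

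The substance is the action on morphisms. Given an atom-preserving monoid homomorphism $\phi \colon S \rightarrow T$ between Levi monoids, with groups of units $G$ and $H$ and atom sets $X$ and $Y$, I would first observe that $\phi$ must carry $G$ into $H$: a unit $g \in G$ has an inverse in $S$, and $\phi$ preserves inverses, so $\phi(g)$ is a unit in $T$. Write $\alpha = \phi|_G \colon G \rightarrow H$ for the resulting group homomorphism, and $\beta = \phi|_X \colon X \rightarrow Y$, which lands in $Y$ precisely because $\phi$ is assumed atom preserving. The morphism condition $\beta(g_1 x g_2) = \alpha(g_1)\beta(x)\alpha(g_2)$ for $g_1,g_2 \in G$, $x \in X$ is then immediate, since it is literally an instance of $\phi$ being a homomorphism, both sides being the image under $\phi$ of the product $g_1 x g_2$ computed in $S$. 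Thus $(\alpha,\beta)$ is a bimodule morphism from $(G,X)$ to $(H,Y)$, and we define the functor to send $\phi \mapsto (\alpha,\beta)$.

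Functoriality is then a matter of restriction being compatible with composition: if $\psi \colon T \rightarrow R$ is a second atom-preserving homomorphism, then $(\psi\phi)|_G = (\psi|_H)(\phi|_G)$ and $(\psi\phi)|_X = (\psi|_Y)(\phi|_X)$, so the assigned morphism of $\psi\phi$ is the composite of the assigned morphisms; and the identity homomorphism on $S$ restricts to the identity on $G$ and on $X$, hence maps to the identity bimodule morphism. There is no real obstacle here: every step is a direct unwinding of definitions. The only point requiring a moment's thought — and worth stating explicitly in the write-up — is why an atom-preserving monoid homomorphism automatically carries units to units; but as noted above this follows because homomorphisms of monoids preserve two-sided inverses. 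I would therefore present the proof in two short paragraphs, one establishing that $(\phi|_G,\phi|_X)$ is a well-defined bimodule morphism and one checking the two functor equations.
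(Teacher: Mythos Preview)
Your proposal is correct and is exactly the routine verification the paper has in mind; the paper simply declares the proof ``immediate'' and gives no further argument. Your write-up spells out the only natural approach, and the one point you flag (that monoid homomorphisms send units to units) is indeed the only thing requiring a sentence of justification.
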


\begin{remark}
{\em Let $S$ be a monoid with group of units $G$.
Then $S$ itself becomes a $G$-bimodule under left and right multiplication.
We shall use this construction below.}
\end{remark}

Our goal now is to show that from each bimodule we may construct a Levi monoid.
Our tool for this will be tensor products and the construction of a suitable tensor algebra analogous to the tensor algebra of module theory; 
see Chapter~6 of \cite{Street}, for example.
We recall the key definitions and results we need first.

Let $G$ be a group that acts on the set $X$ on the right and the set $Y$ on the left.
A function $\alpha \colon X \times Y \rightarrow Z$ to a set $Z$ is called a {\em bi-map} or a {\em 2-map}
if $\alpha (xg,y) = (x,gy)$ for all $(xg,y) \in X \times Y$ where $g \in G$.
We may construct a universal such bi-map $\lambda \colon X \times Y \rightarrow X \otimes Y$
in the usual way \cite{Howie}.
However, there is a simplification in the theory due to the fact that we are acting by means of a group.
The element $x \otimes y$ in $X \otimes Y$ is the equivalence class of $(x,y) \in X \times Y$ under the relation $\sim$
where $(x,y) \sim (x',y')$ if and only if $(x',y') = (xg^{-1},gy)$ for some $g \in G$.

Suppose now that $X$ is a $G$-bimodule.
We may therefore define the tensor product $X \otimes X$ as a set.
We may also define $g(x \otimes y) = gx \otimes y$ and $(x \otimes y)g = x \otimes yg$.
Observe that $x \otimes y = x' \otimes y'$ implies that $gx \otimes y = gx' \otimes y'$, and dually.
It follows that $X \otimes X$ is a also a bimodule.
Put $X^{\otimes 2} = X \otimes X$.
More generally, we may define  $X^{\otimes n}$ for all $n \geq 1$ using $n$-maps, and we define $X^{\otimes 0} = G$
where $G$ acts on itself by multiplication on the left and right.
The proof of the following lemma is almost immediate from the definition and the
fact that we are acting by a group.
It should be compared with Lemma~\ref{le: uniqueness}.

\begin{lemma}\label{le: equality_of_tensors}
Let $n \geq 2$.
Then
$$x_{1} \otimes \ldots \otimes x_{n} = y_{1} \otimes \ldots \otimes y_{n}$$
if and only if 
there are elements $g_{1}, \ldots, g_{n-1} \in G$
such that
$y_{1} = x_{1}g_{1}$,
$y_{2} = g_{1}^{-1}x_{2}g_{2}$,
$y_{3} = g_{2}^{-1}x_{3}g_{3}$,
$\ldots$,
$y_{n} = g_{n-1}^{-1}x_{n}$.
\end{lemma}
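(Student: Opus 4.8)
The plan is to prove Lemma~\ref{le: equality_of_tensors} by induction on $n$, using the defining equivalence relation $\sim$ on the tensor product of two $G$-sets as the base case. Recall that by construction $u \otimes v = u' \otimes v'$ in $U \otimes V$ (for $U$ a right $G$-set and $V$ a left $G$-set) if and only if $u' = ug^{-1}$ and $v' = gv$ for some $g \in G$; this is exactly the statement for $n = 2$, where the single group element is $g_{1}$ (take $y_{1} = x_{1}g_{1}^{-1}$... one must be careful with the direction of the exponent, but it matches the displayed claim after renaming $g \mapsto g_{1}$). So the base case is immediate from the definition recalled just before the lemma.

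For the inductive step, I would write $X^{\otimes n} = X \otimes X^{\otimes (n-1)}$, regarding $X$ as a right $G$-set and $X^{\otimes (n-1)}$ as a left $G$-set via the bimodule structure already established. Then the equality
$$x_{1} \otimes (x_{2} \otimes \cdots \otimes x_{n}) = y_{1} \otimes (y_{2} \otimes \cdots \otimes y_{n})$$
holds if and only if there is $g_{1} \in G$ with $y_{1} = x_{1}g_{1}$ and $g_{1}^{-1}(x_{2} \otimes \cdots \otimes x_{n}) = y_{2} \otimes \cdots \otimes y_{n}$, where the left action on the $(n-1)$-fold tensor is $g_{1}^{-1}(x_{2} \otimes \cdots \otimes x_{n}) = (g_{1}^{-1}x_{2}) \otimes x_{3} \otimes \cdots \otimes x_{n}$. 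Now apply the induction hypothesis to the $(n-1)$-fold equality $(g_{1}^{-1}x_{2}) \otimes x_{3} \otimes \cdots \otimes x_{n} = y_{2} \otimes \cdots \otimes y_{n}$: this yields $g_{2}, \ldots, g_{n-1} \in G$ with $y_{2} = (g_{1}^{-1}x_{2})g_{2} = g_{1}^{-1}x_{2}g_{2}$, $y_{3} = g_{2}^{-1}x_{3}g_{3}$, and so on through $y_{n} = g_{n-1}^{-1}x_{n}$. Together with $y_{1} = x_{1}g_{1}$ these are exactly the relations claimed. The converse direction is a routine check that plugging in such $g_{i}$ into either presentation and using the defining relations of the tensor product collapses both sides to the same element; one just runs the same computation backwards.

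I do not expect a serious obstacle here: the only thing requiring care is the bookkeeping of left versus right actions and the placement of inverses, since the tensor product relation is inherently one-sided and one must keep straight which slot absorbs $g$ and which absorbs $g^{-1}$. A secondary point worth spelling out is the verification that the iterated left and right actions on $X^{\otimes n}$ are well defined and conformable — but this is already asserted in the paragraph preceding the lemma ("It follows that $X \otimes X$ is also a bimodule", extended to $X^{\otimes n}$), so I would simply cite it. The structural parallel with Lemma~\ref{le: uniqueness}, noted in the text, is a useful sanity check: the interleaving pattern of the $g_{i}$ is identical, which is the whole point of the tensor construction as a model for Levi monoids.
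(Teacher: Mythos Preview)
Your proposal is correct. The paper itself does not give a detailed proof, stating only that the lemma ``is almost immediate from the definition and the fact that we are acting by a group.'' Your inductive argument via the identification $X^{\otimes n} \cong X \otimes X^{\otimes (n-1)}$ is a perfectly good way to make this precise, and the bookkeeping you flag (the direction of the inverse in the base case) is indeed the only thing requiring care --- the correct substitution is $g \mapsto g_{1}^{-1}$ rather than $g \mapsto g_{1}$, but you clearly see this.

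One small remark: since the paper defines $X^{\otimes n}$ directly via $n$-maps rather than iteratively, an alternative (and arguably more in keeping with the paper's ``immediate'') route is to observe that the equivalence on $X^{n}$ is generated by the adjacent shifts $(\ldots, x_{i}g, x_{i+1}, \ldots) \sim (\ldots, x_{i}, gx_{i+1}, \ldots)$, and then check directly that the relation described in the lemma is an equivalence relation containing these generators (reflexivity, symmetry, transitivity are one-line checks using that $G$ is a group), while conversely any instance of the lemma's relation decomposes into $n-1$ such shifts. Your inductive version packages the same computation through the associativity isomorphism $X^{\otimes p} \otimes X^{\otimes q} \cong X^{\otimes (p+q)}$, which the paper invokes anyway in the proof of Theorem~\ref{the: first_theorem}. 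Either way the content is the same.
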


Let $X$ be a $G$-bimodule.
Define
$$\mathsf{T}(X) = \bigcup_{n = 0}^{\infty} X^{\otimes n}.$$
We shall call this the {\em tensor monoid} associated with the $G$-bimodule $X$.
Observe that we may regard $X$ as a subset of $\mathsf{T}(X)$;
we denote the inclusion map by $\iota$. 
The justification for our terminology will follow from (1) below.

\begin{theorem}\label{the: first_theorem}\mbox{}
\begin{enumerate}

\item Let $X$ be a $G$-bimodule. 
Then $\mathsf{T}(X)$ is a Levi monoid with group of units $G$ whose associated $G$-bimodule is $X$.

\item  Let $X$ be a $G$-bimodule.
Let $S$ be any monoid with group of units $H$.
Let $(\alpha,\beta)$ be a bimodule morphism from $X$ to $S$ where $\alpha \colon G \rightarrow H$
and $\beta \colon X \rightarrow S$.
Then there is a unique monoid homomorphism $\theta \colon \mathsf{T}(X) \rightarrow S$
such that $\theta \iota = \beta$ and which which agrees with $\alpha$ on $G$ and $\beta$ on $X$.

\item Every Levi monoid is isomorphic to the tensor monoid of its associated bimodule.

\end{enumerate}
\end{theorem}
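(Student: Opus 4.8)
The plan is to treat the three parts in order; part~(1) carries essentially all of the work, and the hard part will be entirely organizational: checking that the concatenation product on $\mathsf{T}(X)$ is well defined and associative where the unit layer $X^{\otimes 0} = G$ meets the positive-length layers, and that the result is equidivisible. Each of these reduces to exhibiting the correct interpolating group elements demanded by Lemma~\ref{le: equality_of_tensors}, together with conformability of the two $G$-actions.

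For part~(1) I would view $\mathsf{T}(X) = \bigcup_{n \geq 0} X^{\otimes n}$ as a disjoint union of sets and give it the product that concatenates tensors of positive length,
\[
(x_{1} \otimes \cdots \otimes x_{m})(y_{1} \otimes \cdots \otimes y_{n}) = x_{1} \otimes \cdots \otimes x_{m} \otimes y_{1} \otimes \cdots \otimes y_{n},
\]
and lets $g \in G = X^{\otimes 0}$ multiply a tensor by acting on it through the given left, respectively right, $G$-action (two elements of $G$ multiplying via the group operation). Well-definedness and associativity I would verify using Lemma~\ref{le: equality_of_tensors}: if two tensors of equal length agree, appending the interpolators it provides, padded out by identities, shows the corresponding concatenations agree; and the mixed associativity laws, such as $x_{1} \otimes \cdots \otimes (x_{m}g) \otimes y_{1} \otimes \cdots = x_{1} \otimes \cdots \otimes x_{m} \otimes (gy_{1}) \otimes \cdots$, follow the same way, via an interpolator sequence all of whose entries are trivial except one, which equals $g^{-1}$. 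The identity is $1 \in G$. I would then let $\lambda \colon \mathsf{T}(X) \to \mathbb{N}$ be the grading $w \in X^{\otimes n} \mapsto n$: it is a homomorphism with $\lambda^{-1}(0) = G$, any unit $w$ with inverse $v$ has $\lambda(w) + \lambda(v) = 0$ and so lies in $G$, and $X \neq \emptyset$ supplies a noninvertible element, so $\lambda$ is a length function and $\mathsf{T}(X)$ is a monoid of the right kind.

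Still within part~(1), for equidivisibility suppose $ab = cd$. If one of $a,b,c,d$ is a unit the factorization is a one-line computation (for instance $a = g \in G$ gives $c = a(g^{-1}c)$ and $b = (g^{-1}c)d$, and the other three cases are similar). Otherwise all four factors have positive length; writing the common value as $x_{1} \otimes \cdots \otimes x_{N} = y_{1} \otimes \cdots \otimes y_{N}$ with $a,b$ splitting it after position $p$ and $c,d$ after position $r$, and assuming $p \leq r$ (the reverse case being symmetric), I would take the interpolators $g_{1}, \ldots, g_{N-1}$ supplied by Lemma~\ref{le: equality_of_tensors}, set $v = g_{p}y_{p+1} \otimes y_{p+2} \otimes \cdots \otimes y_{r}$ (read as $g_{p} \in G$ when $p = r$), and confirm, by a second application of Lemma~\ref{le: equality_of_tensors}, that $c = av$ and $b = vd$. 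This shows $\mathsf{T}(X)$ is a Levi monoid. To identify its associated bimodule, note that an element of $X^{\otimes 0}$ is a unit while one in $X^{\otimes n}$ with $n \geq 2$ factors nontrivially as $x_{1} \cdot (x_{2} \otimes \cdots \otimes x_{n})$, so, with the earlier lemma that a length-one element is an atom, the atoms of $\mathsf{T}(X)$ are exactly the elements of $X^{\otimes 1} = X$; hence $\lambda$ is normalized, and the bimodule of atoms of $\mathsf{T}(X)$, with the $G$-action it inherits, is $X$.

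For part~(2) I would regard $S$ as an $H$-bimodule under multiplication and define $\theta$ by $\theta(g) = \alpha(g)$ on $G = X^{\otimes 0}$ and $\theta(x_{1} \otimes \cdots \otimes x_{n}) = \beta(x_{1}) \cdots \beta(x_{n})$. Well-definedness on each layer follows from Lemma~\ref{le: equality_of_tensors} together with the morphism identity $\beta(g_{1}ug_{2}) = \alpha(g_{1})\beta(u)\alpha(g_{2})$, which makes the resulting product telescope; that $\theta$ is then a homomorphism and that $\theta\iota = \beta$ are immediate, and uniqueness holds because $G \cup X$ generates $\mathsf{T}(X)$. For part~(3), given a Levi monoid $S$ with group of units $G$ and bimodule of atoms $X$, the pair consisting of the identity on $G$ and the inclusion $X \hookrightarrow S$ is a bimodule morphism (a unit times an atom, on either side, is again an atom), so part~(2) yields a homomorphism $\theta \colon \mathsf{T}(X) \to S$ with $\theta(x_{1} \otimes \cdots \otimes x_{n}) = x_{1} \cdots x_{n}$ and $\theta$ fixing $G$. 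Since atoms have normalized length one on both sides, $\theta$ preserves length, so it restricts to a map $X^{\otimes n} \to \lambda_{S}^{-1}(n)$ for each $n$; surjectivity of $\theta$ then follows from Lemma~\ref{le: bradbury}(4) (every element of $S$ is a unit or a product of atoms), and for injectivity, if $\theta(x_{1} \otimes \cdots \otimes x_{n}) = \theta(y_{1} \otimes \cdots \otimes y_{m})$ then $n = m$ by length, and Lemma~\ref{le: uniqueness} provides units $g_{i}$ with $x_{1} = y_{1}g_{1}$, $x_{i} = g_{i-1}^{-1}y_{i}g_{i}$, $x_{n} = g_{n-1}^{-1}y_{n}$; putting $h_{i} = g_{i}^{-1}$, this is precisely the condition of Lemma~\ref{le: equality_of_tensors} for $x_{1} \otimes \cdots \otimes x_{n} = y_{1} \otimes \cdots \otimes y_{n}$ (the cases $n \leq 1$ being trivial), so $\theta$ is a bijection and hence an isomorphism.
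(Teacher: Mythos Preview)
Your proposal is correct and follows essentially the same approach as the paper: the product on $\mathsf{T}(X)$ is concatenation with $G$ acting on the ends, the length function is the grading, equidivisibility is established by extracting an interpolating element from the $g_i$'s supplied by Lemma~\ref{le: equality_of_tensors}, the universal map in (2) is defined on generators and shown well-defined by the telescoping cancellation of the $\alpha(g_i)$'s, and (3) follows by combining Lemma~\ref{le: bradbury}(4) for surjectivity with Lemma~\ref{le: uniqueness} and Lemma~\ref{le: equality_of_tensors} for injectivity. You are somewhat more explicit than the paper about the boundary cases (one factor a unit, $p=r$) and about well-definedness and associativity of the product, but the substance is identical.
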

\begin{proof} (1)  Multiplication is tensoring of sequences and left and right actions by elements of $G$.
We define $\lambda (g) = 0$ where $g \in G$ and $\lambda (x_{1} \otimes \ldots \otimes x_{n}) = n$.
Formally, we are using the fact that there is a canonical isomorphism 
$$X^{\otimes p} \otimes X^{\otimes q} \cong X^{\otimes (p+q)}.$$

Let $\mathbf{a},\mathbf{b},\mathbf{c},\mathbf{d} \in \mathsf{T}(X)$
and suppose that $\mathbf{a} \otimes \mathbf{b} = \mathbf{c} \otimes \mathbf{d}$.
Let $\mathbf{a} = a_{1} \otimes \ldots \otimes a_{m}$,
$\mathbf{b} = b_{m+1} \otimes \ldots \otimes b_{r}$,
$\mathbf{c} = c_{1} \otimes \ldots \otimes c_{n}$
and
$\mathbf{d} = d_{n+1} \otimes \ldots \otimes d_{r}$.
Without loss of generality, we assume that $m < n$ and put $s = n - m$.
We now apply Lemma~\ref{le: equality_of_tensors}.
Let the invertible elements involved be $g_{1}, \ldots, g_{r-1}$.
Define $\mathbf{v} = b_{m+1} \otimes \ldots \otimes b_{m+s}g_{m+s}^{-1}$.
Then $\mathbf{c} = \mathbf{a} \otimes \mathbf{v}$ and $\mathbf{b} = \mathbf{v} \otimes \mathbf{d}$.
Thus the tensor monoid is equidivisible.

The elements of length 0 are precisely the elements of $G$ and so the invertible elements;
the elements of length 1 are precisely the elements of $X$.
It follows that $\mathsf{T}(X)$ is a Levi monoid and that $\lambda$ is its normalized length function.

(2) Define $\theta \colon \mathsf{T}(X) \rightarrow S$ as follows.
If $g \in G$ then $\theta (g) = \alpha (g)$.
Otherwise $\theta(x_{1} \otimes \ldots  \otimes x_{m}) = \beta (x_{1}) \ldots \beta (x_{m})$.
We need to show that this is well-defined and for this we shall use Lemma~\ref{le: equality_of_tensors}.
Suppose that 
$$x_{1} \otimes \ldots \otimes x_{m} = y_{1} \otimes \ldots \otimes y_{m}.$$
Then there are elements $g_{1}, \ldots, g_{m-1} \in G$
such that
$y_{1} = x_{1}g_{1}$,
$y_{2} = g_{1}^{-1}x_{2}g_{2}$,
$y_{3} = g_{2}^{-1}x_{3}g_{3}$,
$\ldots$
$y_{m-1} = g_{m-2}^{-1}x_{m-1}g_{m-1}$,
$y_{m} = g_{m-1}^{-1}x_{m}$.
We have that
$$\beta (y_{1}) \ldots \beta (y_{m})
=
\beta(x_{1}g_{1}) \beta (g_{1}^{-1}x_{2}g_{2}) \ldots \beta (g_{m-2}^{-1}x_{m-1}g_{m-1})\beta (g_{m-1}^{-1}x_{m}).$$
But now we use the fact that $\beta$ is part of a morphism.
Thus $\beta (x_{1}g_{1}) = \beta (x_{1})\alpha (g_{1})$ and so on.
The terms in $\alpha$ cancel out and we are left with $\beta (x_{1}) \ldots \beta (x_{m})$.
It follows that
$$\theta (x_{1} \otimes \ldots  \otimes x_{m})
=
\theta (y_{1} \otimes \ldots \otimes y_{m}),$$
as required.
By construction $\theta$ is a monoid map and agrees with $\alpha$ and $\beta$ as indicated.
Uniqueness follows from the fact that $G$ and $X$ together constitute a generating set for the
tensor monoid.

(3) Let $S$ be a Levi monoid with group of units $G$ and set of atoms $X$.
Then by (2) above there is a monoid homomorphism $\Theta \colon \mathsf{T}(X) \rightarrow S$ 
which induces a bijection between the group of units of  $\mathsf{T}(X)$ and the group of units of $G$
and between the atoms of  $\mathsf{T}(X)$ and the atoms of $S$.
By part (4) of Lemma~\ref{le: bradbury}, this homomorphism is surjective.
By Lemma~\ref{le: uniqueness} and Lemma~\ref{le: equality_of_tensors},
this homomorphism is injective.
\end{proof}

\begin{example} 
{\em If we let $X$ be any non-empty set and let $G$ be the trivial group
then the tensor monoid constructed from the trivial bimodule that arises is nothing other than the free monoid on $X$.}
\end{example}

\begin{lemma}\label{le: functor2} The construction of the tensor monoid is a functor from the category of bimodules and their morphisms 
to the category of Levi monoids and atom preserving monoid homomorphisms.
\end{lemma}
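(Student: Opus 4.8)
The plan is to obtain the morphism part of the functor directly from the universal property in Theorem~\ref{the: first_theorem}(2), and then to deduce both atom-preservation and functoriality from the uniqueness clause of that universal property. Fix a morphism of bimodules $(\alpha ,\beta )\colon X \rightarrow Y$, where $X$ is a $G$-bimodule, $Y$ is an $H$-bimodule, $\alpha \colon G \rightarrow H$ is a group homomorphism, and $\beta (g_{1}xg_{2}) = \alpha (g_{1})\beta (x)\alpha (g_{2})$. By Theorem~\ref{the: first_theorem}(1), $\mathsf{T}(Y)$ is a Levi monoid with group of units $H$, and, recalling that a monoid with group of units $H$ is itself an $H$-bimodule under multiplication, I would view $\mathsf{T}(Y)$ as an $H$-bimodule. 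The first step is to check that the pair $(\alpha ,\iota_{Y}\beta )$, with $\iota_{Y}\colon Y \rightarrow \mathsf{T}(Y)$ the inclusion, is a morphism of bimodules from $X$ to $\mathsf{T}(Y)$: for $h \in H$ and $y \in Y$, the monoid product $hy$ in $\mathsf{T}(Y)$ is exactly the $H$-bimodule action on the atom $y$ (this is built into the construction of $\mathsf{T}(Y)$, via $Y^{\otimes 0}\otimes Y^{\otimes 1}\cong Y^{\otimes 1}$), and similarly on the right, so $\iota_{Y}\beta (g_{1}xg_{2}) = \alpha (g_{1})(\iota_{Y}\beta (x))\alpha (g_{2})$ computed in $\mathsf{T}(Y)$. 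Applying Theorem~\ref{the: first_theorem}(2) with $S = \mathsf{T}(Y)$ then produces a unique monoid homomorphism, which I would call $\mathsf{T}(\alpha ,\beta )\colon \mathsf{T}(X) \rightarrow \mathsf{T}(Y)$, restricting to $\alpha$ on $G$ and to $\beta$ on $X$.

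Next I would verify that $\mathsf{T}(\alpha ,\beta )$ is atom preserving. By Theorem~\ref{the: first_theorem}(1) the normalized length function on $\mathsf{T}(X)$ sends $x_{1}\otimes \ldots \otimes x_{n}$ to $n$, so by the definition of a normalized length function the atoms of $\mathsf{T}(X)$ are precisely the elements of $X$, and likewise the atoms of $\mathsf{T}(Y)$ are precisely the elements of $Y$. Since $\mathsf{T}(\alpha ,\beta )$ restricts to $\beta$ on $X$ and $\beta$ has image in $Y$, it carries atoms to atoms.

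Finally I would establish functoriality. For the identity bimodule morphism $(\mathrm{id}_{G},\mathrm{id}_{X})$, the identity map $\mathrm{id}_{\mathsf{T}(X)}$ is a monoid homomorphism restricting to $\mathrm{id}_{G}$ on $G$ and $\mathrm{id}_{X}$ on $X$, so the uniqueness clause of Theorem~\ref{the: first_theorem}(2) forces $\mathsf{T}(\mathrm{id}_{G},\mathrm{id}_{X}) = \mathrm{id}_{\mathsf{T}(X)}$. For composition, let $(\gamma ,\delta )\colon Y \rightarrow Z$ be a further bimodule morphism, with $Z$ a $K$-bimodule. Both $\mathsf{T}(\gamma ,\delta )\mathsf{T}(\alpha ,\beta )$ and $\mathsf{T}(\gamma \alpha ,\delta \beta )$ are monoid homomorphisms $\mathsf{T}(X)\rightarrow \mathsf{T}(Z)$; the first restricts to $\gamma \alpha$ on $G$ (since $\mathsf{T}(\alpha ,\beta )$ sends $G$ into $H$ by $\alpha$ and then $\mathsf{T}(\gamma ,\delta )$ sends $H$ into $K$ by $\gamma$) and sends each $x \in X$ to $\delta (\beta (x))$, so it agrees with $\mathsf{T}(\gamma \alpha ,\delta \beta )$ on the generating set $G \cup X$ of $\mathsf{T}(X)$; uniqueness in Theorem~\ref{the: first_theorem}(2) then gives $\mathsf{T}(\gamma ,\delta )\mathsf{T}(\alpha ,\beta ) = \mathsf{T}(\gamma \alpha ,\delta \beta )$.

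The only point that needs genuine care is the compatibility claim in the first step: that the $H$-bimodule structure carried abstractly by the set $Y$ coincides with the one it inherits as the set of atoms of $\mathsf{T}(Y)$. Once that is noted, everything else is a formal consequence of the universal property and the fact that $G$ together with $X$ (respectively $Y$) generates the tensor monoid, so uniqueness does all the work. I do not anticipate any serious obstacle beyond keeping the three groups $G$, $H$, $K$ and their roles straight.
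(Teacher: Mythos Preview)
Your proposal is correct and follows essentially the same route as the paper: view $(\alpha,\beta)$ as a bimodule morphism into $\mathsf{T}(Y)$ and invoke the universal property of Theorem~\ref{the: first_theorem}(2) to obtain the induced atom-preserving monoid homomorphism. The paper's proof is terse and omits the explicit verification of identity and composition preservation that you supply; your additional detail is sound and fills in what the paper leaves to the reader.
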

\begin{proof} Let $(\alpha,\beta)$ be a morphism from the $G$-bimodule $X$ to the $H$-bimodule $Y$.
The morphism $(\alpha,\beta)$ can equally well be regarded as a morphism from $X$ to $\mathsf{T}(Y)$.
By part (2) of Theorem~\ref{the: first_theorem}, this extends to a monoid homomorphism
$\theta \colon \mathsf{T}(X) \rightarrow \mathsf{T}(Y)$ that is atom preserving.
\end{proof}

From Lemma~\ref{le: functor1} and Lemma~\ref{le: functor2} together with part (2) of Theorem~\ref{the: first_theorem},
we have in fact proved the following.

\begin{theorem} 
The category of Levi monoids and atom preserving monoid homomorphisms 
and the category of bimodules and their morphisms are equivalent.
\end{theorem}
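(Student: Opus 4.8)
The plan is to read this statement as a formal consequence of the two functoriality lemmas and the universal property, by exhibiting the required natural isomorphisms directly. Write $\mathbf{L}$ for the category of Levi monoids and atom preserving homomorphisms, $\mathbf{B}$ for the category of bimodules and their morphisms, $\mathsf{A} \colon \mathbf{L} \rightarrow \mathbf{B}$ for the associated-bimodule functor of Lemma~\ref{le: functor1}, and $\mathsf{T} \colon \mathbf{B} \rightarrow \mathbf{L}$ for the tensor-monoid functor of Lemma~\ref{le: functor2}. It suffices to produce natural isomorphisms $\mathsf{A}\mathsf{T} \cong \mathrm{id}_{\mathbf{B}}$ and $\mathsf{T}\mathsf{A} \cong \mathrm{id}_{\mathbf{L}}$.

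First I would dispose of the composite $\mathsf{A}\mathsf{T}$, which I expect to be the identity on the nose up to the canonical identification of $X$ with $\iota(X)$. By part~(1) of Theorem~\ref{the: first_theorem}, for a $G$-bimodule $X$ the monoid $\mathsf{T}(X)$ has group of units $G$ and its atoms are exactly the length-one elements, i.e.\ the elements of $\iota(X)$; moreover the left and right actions of $G$ on these atoms induced by multiplication inside $\mathsf{T}(X)$ are precisely the original actions on $X$, so $\mathsf{A}\mathsf{T}(X) = X$. For a bimodule morphism $(\alpha,\beta) \colon X \rightarrow Y$, the functor $\mathsf{T}$ produces, by part~(2) of Theorem~\ref{the: first_theorem}, the unique monoid homomorphism $\theta$ agreeing with $\alpha$ on $G$ and with $\beta$ on $X$; applying $\mathsf{A}$ merely restricts $\theta$ to the units and atoms, returning $(\alpha,\beta)$. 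Hence $\mathsf{A}\mathsf{T} = \mathrm{id}_{\mathbf{B}}$.

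The substantive point is the composite $\mathsf{T}\mathsf{A}$. For a Levi monoid $S$ with group of units $G$ and atom bimodule $X = \mathsf{A}(S)$, part~(2) of Theorem~\ref{the: first_theorem} applied to the inclusion $(\mathrm{id}_G, \iota_S)$ of $X$ into $S$ (using the bimodule structure on $S$ from the Remark above) yields a monoid homomorphism $\Theta_S \colon \mathsf{T}(X) \rightarrow S$ that is the identity on $G$ and on $X$, and by part~(3) of the same theorem $\Theta_S$ is an isomorphism. I would then check naturality of $\Theta$: given an atom preserving homomorphism $\phi \colon S \rightarrow S'$, both $\phi \circ \Theta_S$ and $\Theta_{S'} \circ \mathsf{T}(\mathsf{A}(\phi))$ are monoid homomorphisms $\mathsf{T}(\mathsf{A}(S)) \rightarrow S'$, so by the uniqueness clause of part~(2) of Theorem~\ref{the: first_theorem} it is enough to see that they agree on the generating set consisting of the units and atoms of $\mathsf{T}(\mathsf{A}(S))$. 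On a unit $g$ the first sends $g \mapsto \phi(g)$ while the second sends $g \mapsto \mathsf{A}(\phi)(g) \mapsto \phi(g)$; on an atom $x$ the first sends $x \mapsto \phi(x)$ while the second sends $x \mapsto \mathsf{A}(\phi)(x) = \phi(x) \mapsto \phi(x)$. Hence the two composites coincide, $\Theta \colon \mathsf{T}\mathsf{A} \Rightarrow \mathrm{id}_{\mathbf{L}}$ is a natural isomorphism, and $\mathbf{L} \simeq \mathbf{B}$.

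I expect the only step needing care to be this naturality verification, and even there the uniqueness half of the universal property reduces it to an inspection on generators, so no genuine obstacle should arise. As an alternative route one could instead verify that $\mathsf{T}$ is essentially surjective (Theorem~\ref{the: first_theorem}(3)) and fully faithful — faithfulness being immediate since a bimodule morphism is recovered from its extension by restriction to units and atoms, and fullness following because an atom preserving homomorphism $\mathsf{T}(X) \rightarrow \mathsf{T}(Y)$ restricts to a bimodule morphism $X \rightarrow Y$ (the defining identity $\beta(g_1 x g_2) = \alpha(g_1)\beta(x)\alpha(g_2)$ being just the homomorphism property applied to the atom $g_1 x g_2$, atoms being closed under the unit action) whose canonical extension, again by uniqueness, is the given homomorphism.
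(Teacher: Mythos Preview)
Your proposal is correct and is precisely the argument the paper leaves implicit: the paper offers no proof beyond the sentence ``From Lemma~\ref{le: functor1} and Lemma~\ref{le: functor2} together with part~(2) of Theorem~\ref{the: first_theorem}, we have in fact proved the following,'' and your construction of the natural isomorphisms $\mathsf{A}\mathsf{T} \cong \mathrm{id}_{\mathbf{B}}$ and $\mathsf{T}\mathsf{A} \cong \mathrm{id}_{\mathbf{L}}$, with naturality checked on generators via the uniqueness clause of the universal property, is exactly the standard way to unpack that claim.
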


We now look at some important special cases of our construction.
Let $X$ be a $G$-bimodule.
If whenever $xg = x$ we have that $g$ is an identity, 
then we say the action is {\em right free}.
A bimodule which is right free is called a {\em covering bimodule}; 
this terminology is taken from \cite{N1}.
We define {\em left free} dually.
A bimodule which is both left and right free is said to be {\em bifree}.

\begin{lemma} Let $S$ be a Levi monoid with group of units $G$ and set of atoms $X$.
\begin{enumerate}

\item The $G$-bimodule $X$ is right free if and only if $S$ is left cancellative.

\item The $G$-bimodule $X$ is left free if and only if $S$ is right cancellative.

\item The $G$-bimodule $X$ is bifree if and only if $S$ is  cancellative.

\end{enumerate}
\end{lemma}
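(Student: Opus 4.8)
The plan is to prove the three equivalences by relating the cancellativity of $S$ directly to the freeness of the action on the bimodule $X$ of atoms, using the uniqueness-of-factorization result (Lemma~\ref{le: uniqueness}) to reduce products of atoms to the one-step picture. Since part (3) is immediate from parts (1) and (2), and part (2) is dual to part (1), the only real work is part (1): I would establish that $X$ is right free if and only if $S$ is left cancellative.

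For the forward direction of (1), suppose $X$ is right free and that $sa = sb$ in $S$. Writing $s$, $a$, $b$ as products of atoms via part (4) of Lemma~\ref{le: bradbury}, we get two factorizations of the same element of $S$, so by Lemma~\ref{le: uniqueness} they have the same length and differ by an interleaving sequence of units. The terms coming from the common prefix $s$ must be matched up against each other, and the unit $g$ threaded through at the junction between (the copy of) $s$ and the tail satisfies an equation of the form $x_{k}g = x_{k}$ at the last atom of $s$; right freeness forces $g = 1$, and then the tails of $a$ and $b$ are literally equal atom-string by atom-string after absorbing the trivial unit, giving $a = b$. Conversely, if $S$ is left cancellative and $xg = x$ for an atom $x$ and a unit $g$, then directly $x \cdot g = x \cdot 1$ (reading $x$ as a left multiplier) — more carefully, pick any atom $x$; then $xg = x$ means $x$ acts the same on $g$ and on $1$, but to use left cancellativity I want to cancel $x$ on the left, which needs $g = 1$ — this is exactly cancelling the atom $x$ in the equation $xg = x1$, valid since $S$ is left cancellative. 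Part (2) is the mirror image, cancelling on the right and using left freeness; part (3) is the conjunction.

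The step I expect to be the main obstacle is the forward direction of (1): carefully tracking the interleaving units of Lemma~\ref{le: uniqueness} across the boundary between the prefix $s$ and the suffixes $a$, $b$, and checking that right freeness of the action is precisely what is needed to collapse the boundary unit to the identity. One has to argue that the first $\lambda(s)$ atoms of the two factorizations can be taken to be identical (after choosing the same atom factorization of $s$ on both sides), so that the interleaving units $g_{1}, \ldots, g_{\lambda(s)-1}$ are all trivial; then the unit $g_{\lambda(s)}$ at the junction satisfies $x g_{\lambda(s)} = x$ where $x$ is the last atom of $s$, whence $g_{\lambda(s)} = 1$ by right freeness, and an easy induction down the tail shows all remaining interleaving units vanish, so $a = b$ as strings of atoms, hence in $S$. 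Everything else is routine bookkeeping and dualization, and the reduction of (3) to (1) and (2) is a one-line remark.
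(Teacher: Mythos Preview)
Your overall strategy for (1) is correct and close in spirit to the paper's, but there is a slip in the forward direction. After you correctly obtain $g_{\lambda(s)} = 1$ at the junction, you claim that ``an easy induction down the tail shows all remaining interleaving units vanish, so $a = b$ as strings of atoms.'' This is not true: from $a_1 = b_1 g_{m+1}$ (writing $m = \lambda(s)$) you cannot conclude $g_{m+1} = 1$ by right freeness, because $a_1$ and $b_1$ may be different atoms. What \emph{is} true is that the tail equations
\[
a_1 = b_1 g_{m+1},\quad a_2 = g_{m+1}^{-1} b_2 g_{m+2},\quad \ldots,\quad a_p = g_{m+p-1}^{-1} b_p
\]
telescope when multiplied together to give $a_1 \cdots a_p = b_1 \cdots b_p$ in $S$, i.e.\ $a = b$. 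So the conclusion survives, but not for the reason you give; the atom strings of $a$ and $b$ need not agree term by term.

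For comparison, the paper avoids this tail bookkeeping altogether. Instead of factoring $s$, $a$, $b$ into atoms at the outset, it applies equidivisibility directly to $sa = sb$ to produce (using the length function) a unit $u$ with $s = su$ and $b = ua$. The problem then reduces to the single claim that $s = su$ forces $u = 1$; this follows by writing $s = s_1 \cdots s_m$ as a product of atoms, applying Lemma~\ref{le: uniqueness} to $s_1 \cdots s_m = s_1 \cdots s_{m-1}(s_m u)$, and using right freeness at each step to kill the interleaving units and finally $u$ itself. The upshot is the same, but the reduction via equidivisibility is cleaner: it isolates a comparison of two atom factorizations that agree in all but the last slot, so right freeness really does force every interleaving unit to vanish.
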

\begin{proof} Clearly, we need only prove (1).
It is immediate that if $S$ is left cancellative then $X$ is right free.
We prove the converse.
Suppose that $X$ is right free.
We prove that $S$ is left cancellative.
Let $ab = ac$.
Then by equidivisibility and the fact that we have a length function there is, without loss of generality,
an invertible element $u$ such that $a = au$ and $c = ub$.
Our claim will be proved if we can show that $a = au$ implies that $u$ is the identity.
If $a$ is an atom then this is immediate by our assumption that $X$ is right free.
More generally,  by part (4) of Lemma~\ref{le: bradbury}we may write $a = a_{1} \ldots a_{m}$ where the $a_{i}$ are atoms.
Thus $ a_{1} \ldots a_{m} =  a_{1} \ldots a_{m}u$.
We now use part (2) of Lemma ~\ref{le: uniqueness} to deduce that $u$ is the identity, as required.
\end{proof}

Left cancellative Levi monoids are called {\em left Rees monoids} and cancellative Levi monoids are called {\em Rees monoids}.
We see that left Rees monoids are constructed from covering bimodules.

\begin{remark}
{\em The tensor monoid of a covering bimodule, without any further properties, is constructed in \cite{N2}.
The discussion there of the Fock tree and its symbolic labelling is `really' about the structure of the $\mathscr{R}$-classes of the monoid.}
\end{remark}

A monoid is said to be {\em right rigid} if 
any two principal right ideals that intersect are comparable.
The notion of `rigidity' is defined in \cite{Cohn}.
Birget \cite{Birget} uses the terminology {\em $\mathscr{R}$-unambiguous}.
The proof of the following is easy or see Lemma~2.1 of \cite{Lawson2008a}.

\begin{lemma} 
A left cancellative monoid is right rigid if and only if it is equidivisible.
\end{lemma}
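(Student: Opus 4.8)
The plan is to prove both directions of the equivalence ``left cancellative and right rigid $\Leftrightarrow$ left cancellative and equidivisible'', since the statement quantifies over left cancellative monoids. Actually, reading it carefully, the cleanest route is to prove directly that for a left cancellative monoid, right rigidity and equidivisibility are equivalent properties, independently of any length function.

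First I would prove that equidivisibility implies right rigidity (this direction needs no cancellativity). Suppose $aS \cap bS \neq \emptyset$, so there is $c$ with $c = ax = by$ for some $x, y \in S$. Applying equidivisibility to the equation $ax = by$, we get either $a = bu$ and $y = ux$ for some $u$, or $b = av$ and $x = vy$ for some $v$. In the first case $aS \subseteq bS$; in the second $bS \subseteq aS$. Hence the two principal right ideals are comparable, which is exactly right rigidity. Note this half does not use left cancellativity at all.

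For the converse, assume $S$ is left cancellative and right rigid, and suppose $ab = cd$. Then $ab \in aS \cap cS$, so this intersection is nonempty, and by right rigidity $aS$ and $cS$ are comparable. Without loss of generality say $cS \subseteq aS$, so $c = au$ for some $u \in S$. Substituting into $ab = cd$ gives $ab = aud$, and here is where left cancellativity enters: cancelling $a$ on the left yields $b = ud$. So we have produced $u$ with $c = au$ and $b = ud$, which is one of the two alternatives in the definition of equidivisibility; the other case $aS \subseteq cS$ is symmetric and produces the other alternative. Therefore $S$ is equidivisible.

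The main obstacle — really the only subtlety — is making sure left cancellativity is genuinely needed and correctly used in the second direction: without it, from $c = au$ and $ab = aud$ one cannot conclude $b = ud$, so the argument would break. The first direction is essentially formal. I would also remark that this is why the statement is phrased for left cancellative monoids rather than arbitrary ones, and refer the reader to Lemma~2.1 of \cite{Lawson2008a} for an alternative treatment, as the excerpt already suggests.
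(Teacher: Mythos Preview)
Your argument is correct and is exactly the natural proof one would give; the paper itself does not spell out a proof but simply declares the result easy and refers to Lemma~2.1 of \cite{Lawson2008a}. Your observation that the implication ``equidivisible $\Rightarrow$ right rigid'' requires no cancellativity, while the converse genuinely uses left cancellation to pass from $ab = aud$ to $b = ud$, is precisely the point.
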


The following is proved as Theorem~2.6 of \cite{Lawson2008a}.

\begin{proposition} A monoid $S$ is a left Rees monoid if and only if it satisfies the following conditions:
\begin{description}
\item[{\rm (LR1)}] $S$ is a left cancellative monoid.

\item[{\rm (LR2)}]  $S$ is right rigid.

\item[{\rm (LR3)}] Each principal right ideal is properly contained in only a finite number of principal right ideals.

\item[{\rm (LR4)}] There is at least one non-invertible element.

\end{description}
\end{proposition}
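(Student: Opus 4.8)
The plan is to establish the equivalence by showing that the four conditions (LR1)--(LR4) characterise precisely the left cancellative Levi monoids, using the results already assembled in the excerpt. Since the paper has defined a left Rees monoid to be a left cancellative Levi monoid, one direction is essentially bookkeeping: if $S$ is a left Rees monoid, then (LR1) is immediate, (LR4) holds because a Levi monoid by definition contains a noninvertible element, and (LR3) is exactly part (1) of Lemma~\ref{le: right_ideals}, which says every interval $[aS,S]$ is finite (so $aS$ lies below only finitely many principal right ideals). For (LR2), I would invoke the lemma stating that a left cancellative monoid is right rigid if and only if it is equidivisible; since a Levi monoid is equidivisible and $S$ is left cancellative, right rigidity follows at once.

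For the converse, suppose $S$ satisfies (LR1)--(LR4). The work is to produce a (necessarily normalized) length function and to verify equidivisibility. Equidivisibility is the easy half again: by the same lemma, a left cancellative right rigid monoid is equidivisible, so (LR1) and (LR2) give equidivisibility directly. The substantive step is the length function. Here I would follow the template of Proposition~\ref{prop: normalized} and Lemma~\ref{le: right_ideals} in reverse: define $\nu(a) = |[aS,S]| - 1$, which is a well-defined natural number precisely because of (LR3), and then show $\nu$ is a homomorphism with $\nu^{-1}(0)$ equal to the group of units. The homomorphism property amounts to reproving parts (2) and (3) of Lemma~\ref{le: right_ideals} — that $a[bS,S] = [abS,aS]$ and $[abS,S] = a[bS,S] \cup [aS,S]$ — but those proofs in the excerpt used only equidivisibility (already in hand) together with a length function, so they must be rederived without presupposing $\lambda$. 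This is the main obstacle: one has to argue, using left cancellativity and right rigidity alone, that if $abS \subseteq cS \subseteq aS$ then $cS = avS$ for a unique $vS \in [bS,S]$, and that the map $vS \mapsto avS$ is injective on intervals. Injectivity is where left cancellativity does real work: if $avS = awS$ then $av = awu$ for a unit-or-not $u$, and pushing this through requires knowing $S$ is left cancellative and that the interval $[bS,S]$ is a chain, so that $vS$ and $wS$ are comparable and hence one obtains $v = wu'$, and then cancelling $a$ gives $vS \subseteq wS$ and symmetrically the reverse.

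Once $\nu$ is shown to be additive on products, that it separates units from non-units follows from part (1) of Lemma~\ref{le: bradbury}'s analogue (i.e.\ $aS = S$ iff $a$ has a right inverse, and by left cancellativity a right inverse in a monoid with this structure is a two-sided inverse), combined with (LR4) to guarantee the monoid is not a group. At that point $S$ is an equidivisible monoid with a length function and a noninvertible element, hence a Levi monoid, and being left cancellative it is a left Rees monoid, completing the cycle.

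I would also remark that the cleanest exposition simply cites Theorem~2.6 of \cite{Lawson2008a} for the converse, since the statement is quoted from there; but the point worth making explicitly in this paper is that the forward direction is now transparent given the Levi monoid machinery — (LR2) is equidivisibility repackaged via the left cancellative--right rigid equivalence, (LR3) is the finiteness of $[aS,S]$ from Lemma~\ref{le: right_ideals}(1), and (LR4) is built into the definition of a Levi monoid — so the proposition is essentially a dictionary entry translating the older axiomatic description into the new bimodule-theoretic language. The only genuinely new content is checking that conditions (LR1)--(LR4), which make no reference to a length function, nonetheless force one to exist, and that is handled by the $\nu(a) = |[aS,S]|-1$ construction above.
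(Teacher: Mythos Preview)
Your argument is correct, and the observation that left cancellativity can stand in for the length function at the two critical points in Lemma~\ref{le: right_ideals} is exactly right: from $a = ayu$ one cancels to get $1 = yu$, and then $y(uy) = y$ forces $uy = 1$, so $y$ is a unit; likewise $acS = adS$ yields $c = df$ and $d = cg$ by cancelling $a$, hence $cS = dS$. The injectivity step you describe can in fact be done more directly than you indicate --- $avS = awS$ gives $av = aws$ and $aw = avt$, so cancelling $a$ immediately yields $vS = wS$ --- but your longer route via comparability also works.

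The paper, however, does not prove this proposition at all: it simply records that the result is Theorem~2.6 of \cite{Lawson2008a} and moves on. Your proposal therefore takes a genuinely different route, supplying a self-contained derivation from the Levi monoid machinery assembled in Section~2. What you gain is exactly what you say in your final paragraph: the proposition becomes a dictionary entry internal to the present paper rather than an external citation, and the only real content --- that (LR1)--(LR3) force the existence of a length function via $\nu(a) = |[aS,S]| - 1$ --- is made visible. What the paper's approach buys is brevity, since the result was already established in the predecessor paper and is being quoted here only to connect the new definition of left Rees monoid (left cancellative Levi monoid) with the old axiomatic one.
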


\begin{remark}
{\em  Condition (LR4) is assumed in \cite{Lawson2008a} though not stated in this form.}
\end{remark}

\subsection{The structure of irreducible bimodules}

We begin with a result about maximal proper principal ideals in Levi monoids.

\begin{lemma}\label{le: molly} Let $S$ be a Levi monoid and let $a \in S$.
Then $SaS$ is a maximal proper principal ideal if and only if $a$ is an atom.
\end{lemma}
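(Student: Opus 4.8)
The plan is to prove both implications directly, using nothing beyond the length function, equidivisibility, Lemma~\ref{le: Greens_relations} and the one-sided characterization of atoms in Lemma~\ref{le: bradbury}; the argument is essentially the two-sided analogue of Lemma~\ref{le: bradbury}(2).

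\emph{Atom $\Rightarrow$ maximal.} Assuming $a$ is an atom, I first note that $a$ is noninvertible, so $SaS \neq S$ (an equation $1 = sat$ would force $\lambda(a)=0$, hence $a \in G$). Now I take an arbitrary principal ideal with $SaS \subseteq SbS \subseteq S$ and aim to show $SbS \in \{SaS, S\}$. From $a \in SbS$ I write $a = sbt$. Reading this factorization as $a = (sb)t$ and using that $a$ is an atom shows $sb$ or $t$ is invertible; reading it as $a = s(bt)$ shows $s$ or $bt$ is invertible. If $sb$ is invertible then $\lambda(b)=0$, so $b$ is a unit and $SbS = S$; similarly if $bt$ is invertible. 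Otherwise $sb$ and $bt$ are both noninvertible, which forces $t$ and $s$ respectively to be units, whence $b = s^{-1}at^{-1} \in SaS$ and so $SbS = SaS$. These cases are exhaustive.

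\emph{Maximal $\Rightarrow$ atom.} Assuming $SaS$ is a maximal proper principal ideal, $a$ is not invertible since $SaS = S$ would contradict properness. Given a factorization $a = bc$, I have $SaS \subseteq SbS$, and since $SbS$ is a principal ideal and $SaS$ is maximal among proper ones, either $SbS = S$ or $SbS = SaS$. In the first case $b$ is a unit (an equation $1 = sbt$ forces $\lambda(b)=0$). In the second case $b \in SbS = SaS = SbcS$ gives $b = sbct$ for some $s,t$, and applying $\lambda$ forces $\lambda(c)=0$, so $c$ is a unit. Hence every factorization of the noninvertible element $a$ has an invertible factor, i.e.\ $a$ is an atom.

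I do not expect a genuine obstacle here; the only point requiring care is the convention that ``maximal proper principal ideal'' means maximal among \emph{principal} two-sided ideals strictly contained in $S$, which is precisely what makes it enough, in each direction, to compare $SaS$ only with ideals of the form $SbS$ rather than with arbitrary two-sided ideals. The remainder is the same style of length-function bookkeeping used throughout Section~2, and the result can be compared with Lemma~\ref{le: bradbury}(2) in the same way that Lemma~\ref{le: uniqueness} is compared with Lemma~\ref{le: equality_of_tensors}.
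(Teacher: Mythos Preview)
Your proof is correct and follows essentially the same route as the paper's: both directions are handled by length-function bookkeeping on a factorization $a = sbt$ (paper) or $a = bc$ and $b = sbct$ (you). The only cosmetic differences are that the paper invokes the \emph{normalized} length function (so an atom has $\lambda(a)=1$, making the case split in the ``atom $\Rightarrow$ maximal'' direction a one-liner), and proves ``maximal $\Rightarrow$ atom'' by contrapositive using Lemma~\ref{le: bradbury}(4) to extract an atomic left factor, whereas you argue both directions directly from the raw atom definition; your version is marginally more self-contained in that it does not appeal to Proposition~\ref{prop: normalized}.
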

\begin{proof} Suppose that $a$ is not an atom.
If $a$ is invertible then $SaS = S$ and so $SaS$ is not a proper principal ideal.
If $a$ is not invertible and not an atom, then we may write $a = bc$ where $b$ is an atom and $c$ is not invertible.
It follows that $SaS \subseteq SbS$.
If $SaS = SbS$ then $a$ and $b$ would have the same length and so since $b$ is an atom it would follow that $a$ is an atom.
Thus  $SaS \subset SbS$ and $SaS$ is not maximal.
Conversely, suppose now that $a$ is an atom.
Suppose that $SaS \subseteq SbS$.
Then $a = xby$.
But $a$ has length 1 and so $xby$ has length 1.
If $b$ has length 1 then both $x$ and $y$ are invertible and $SaS = SbS$.
If $x$ or $y$ has length 1 then $b$ is invertible and $SbS = S$.
It follows that $SaS$ is a maximal proper principal ideal.
\end{proof}

A $G$-bimodule $X$ is said to be  {\em irreducible} if for all $x,y \in X$ we have that $y = gxh$ for some $g,h \in G$.
We shall write $(X,x)$ to mean that the irreducible bimodule $X$ is taken with respect to the point $x \in X$.

\begin{lemma}\label{le: max_ideal} 
A Levi monoid has a maximum proper principal ideal if and only if its associated bimodule of atoms is irreducible.
\end{lemma}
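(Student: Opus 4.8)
The plan is to exploit Lemma~\ref{le: molly}, which says that the maximal proper principal ideals of a Levi monoid $S$ are exactly the ideals $SaS$ with $a$ an atom. So $S$ has a \emph{maximum} proper principal ideal if and only if there is an atom $a$ with $SaS = SbS$ for every atom $b$; and by part (4) of Lemma~\ref{le: Greens_relations} (or directly from Lemma~\ref{le: bradbury}) this happens precisely when $GaG = GbG$ for all atoms $a,b$. That last condition is word-for-word the statement that the bimodule of atoms $X$ is irreducible. This gives the whole equivalence almost immediately, so the work is really just in assembling these pieces cleanly.

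More concretely, first I would show the forward direction: suppose $SaS$ is the maximum proper principal ideal. Every atom $b$ has $SbS$ a maximal proper principal ideal by Lemma~\ref{le: molly}, and $SbS \subseteq SaS$ by maximality of $SaS$; since $SbS$ is itself maximal this forces $SbS = SaS$. Now $SbS = SaS$ means $b = gah$ for some $g,h \in G$ — here one uses that the only factorizations are by units, i.e. part (4) of Lemma~\ref{le: Greens_relations} applied to $\mathscr{J}$, together with the fact that an expression $b = s_1 a s_2$ with $b,a$ atoms forces $s_1,s_2 \in G$ by the length function. Hence any two atoms are related by left/right multiplication by units, which is exactly irreducibility of $X$.

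For the converse, suppose $X$ is irreducible and pick any atom $a$ (atoms exist since a Levi monoid has a noninvertible element, hence by Lemma~\ref{le: bradbury}(4) a product of atoms). I claim $SaS$ is the maximum proper principal ideal. It is a maximal proper principal ideal by Lemma~\ref{le: molly}. Given any proper principal ideal $ScS$ with $c$ noninvertible, write $c = b_1 \cdots b_m$ as a product of atoms (Lemma~\ref{le: bradbury}(4)); then $ScS \subseteq Sb_1S$, and since $b_1$ is an atom, irreducibility gives $b_1 = g a h$, whence $Sb_1S = SaS$. Therefore $ScS \subseteq SaS$, so $SaS$ contains every proper principal ideal and is the maximum one. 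One should also note $SaS$ is itself proper, which holds because $a$ is noninvertible.

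The only mildly delicate point — and the step I expect to need the most care — is the translation $SbS = SaS \iff b = gah$ with $g,h \in G$. The inclusion $\supseteq$ is obvious; for $\subseteq$, from $SbS = SaS$ we get $b = s_1 a s_2$ and $a = t_1 b t_2$, so $b = s_1 t_1 b t_2 s_2$, and the length function forces $\lambda(s_1) + \lambda(t_1) + \lambda(t_2) + \lambda(s_2) = 0$, hence all four factors are units; in particular $s_1, s_2 \in G$. This is essentially the computation already performed in the proof of Lemma~\ref{le: Greens_relations}(6) and in Lemma~\ref{le: molly}, so I would either cite those or reproduce the one-line argument. Everything else is bookkeeping with the results already established.
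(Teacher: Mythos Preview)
Your proof is correct and follows essentially the same approach as the paper: both directions hinge on Lemma~\ref{le: molly} (maximal proper principal ideals are generated by atoms), part (4) of Lemma~\ref{le: Greens_relations} (so $SaS = SbS$ iff $GaG = GbG$), and part (4) of Lemma~\ref{le: bradbury} (to factor a noninvertible element through an atom). Your final paragraph on the ``mildly delicate point'' is exactly the content of Lemma~\ref{le: Greens_relations}(4), so you can simply cite it rather than rederive it.
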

\begin{proof} Suppose first that the bimodule of atoms is irreducible.
Let $a$ be some fixed atom and let $b$ be any atom.
By assumption, $a = gbh$ where $g,h \in G$.
Thus $SaS = SbS$.
Let $SxS$ be any principal ideal where $x$ is not an atom and not invertible.
Then $x = by$ for some atom $b$.
Then $SxS \subseteq SbS = SaS$.
It follows that $SaS$ is the maximum proper principal ideal.
Conversely, suppose now that the monoid has a maximum proper principal ideal.
Let this be $SaS$.
Maximum is certainly maximal and so $a$ has to be an atom by Lemma~\ref{le: molly}.
Given any other atom $b$ we have $SbS \subseteq SaS$ and so $SbS = SaS$.
The result follows now by part (4) of Lemma~2.1.
\end{proof}

A Levi monoid whose associated bimodule of atoms is irreducible is itself said to be {\em irreducible}.
We shall now show how to construct all irreducible bimodules by generalizing a construction from \cite{N1}.
First some terminology.
A relation $\rho \subseteq A \times B$ will be said to {\em project onto both components}
if for each $a \in A$ there exists $b \in B$ such that $(a,b) \in \rho$
and for each $b \in B$ there exists $a \in A$ such that $(a,b) \in \rho$.

Let $X$ be an irreducible $G$-bimodule.
Choose $x \in X$.
Define the following two subsets of $G$
$$G_{x}^{+} = \{g \in G \colon gx \in xG\} \mbox{ and } G_{x}^{-} = \{g \in G \colon xg \in Gx \}$$
define also the subset, and so relation, $\gamma \subseteq G_{x}^{+} \times G_{x}^{-}$ by
$$g \, \gamma \, h \Leftrightarrow gx = xh.$$

\begin{lemma} With the above definitions, $G_{x}^{+}$ and $G_{x}^{-}$ are both subgroups of $G$ and 
$\gamma$ is a subgroup of $G_{x}^{+} \times G_{x}^{-}$ which projects onto both components.
Thus $\gamma$ is a subdirect product.
\end{lemma}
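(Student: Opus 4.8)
The plan is to verify each assertion directly from the defining equation $gx = xh$ for the relation $\gamma$, together with the fact that the left and right $G$-actions on $X$ are conformable. First I would check that $G_x^+$ is a subgroup. It clearly contains $1$ since $1\cdot x = x = x\cdot 1$. For closure and inverses, if $g_1 x = x h_1$ and $g_2 x = x h_2$ with $h_1, h_2 \in G$, then $(g_1 g_2) x = g_1 (g_2 x) = g_1 (x h_2) = (g_1 x) h_2 = x h_1 h_2$, using conformability; so $g_1 g_2 \in G_x^+$. Similarly from $gx = xh$ we get $x = g^{-1} x h$, hence $x h^{-1} = g^{-1} x$, i.e. $g^{-1} x = x h^{-1}$, so $g^{-1} \in G_x^+$. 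The argument for $G_x^-$ is the mirror image: from $xg = kx$ with $k \in G$ one deduces in the same way that $G_x^-$ is closed under products and inverses.

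Next I would observe that these computations have in fact established more: the element $h$ appearing in $gx = xh$ is uniquely determined by $g$ (and conversely $g$ by $h$), so $\gamma$ is the graph of a function, and the computations above show this function is a homomorphism both ways. Concretely, if $g_1 \,\gamma\, h_1$ and $g_2 \,\gamma\, h_2$ then the display $(g_1 g_2) x = x (h_1 h_2)$ just derived shows $g_1 g_2 \,\gamma\, h_1 h_2$, and $g^{-1} x = x h^{-1}$ shows $g^{-1}\,\gamma\,h^{-1}$; also $1\,\gamma\,1$. Hence $\gamma$ is a subgroup of $G_x^+ \times G_x^-$.

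Finally, $\gamma$ projects onto both components essentially by definition: if $g \in G_x^+$ then by definition of $G_x^+$ we have $gx \in xG$, say $gx = xh$, and then $h \in G_x^-$ since $xh = gx \in Gx$, and $(g,h) \in \gamma$; symmetrically, if $h \in G_x^-$ then $xh = gx$ for some $g$, which then lies in $G_x^+$. So every element of $G_x^+$ occurs as a first coordinate and every element of $G_x^-$ as a second coordinate of some pair in $\gamma$, which is exactly the statement that $\gamma$ is a subdirect product of $G_x^+$ and $G_x^-$.

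I do not expect any real obstacle here; the only point requiring the slightest care is making sure that in each manipulation the element produced on the right genuinely lies in $G$ (so that membership in $G_x^{\pm}$ is legitimate), which is immediate because $G$ is a group, and that conformability is invoked correctly when sliding a group element from one side of $x$ to the other through a product. In other words this is a routine verification, and the substance of the lemma is simply that the two one-sided "stabilizer-type" subgroups are glued together by a well-defined isomorphism-like correspondence $\gamma$ — a fact that will be upgraded in the sequel to an actual isomorphism between divisors of $G$.
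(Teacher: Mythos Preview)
Your core computations are correct and constitute exactly the routine verification the paper has in mind (the paper states the lemma without proof). The arguments for closure under product and inverse in $G_x^{\pm}$, for $\gamma$ being a subgroup of the product, and for the surjectivity of both projections are all fine.

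There is, however, one genuine error embedded in your commentary. You assert that ``the element $h$ appearing in $gx = xh$ is uniquely determined by $g$ (and conversely $g$ by $h$), so $\gamma$ is the graph of a function.'' This is \emph{false} in general. Nothing in the definition of a $G$-bimodule forces the right action to be free, and if $xh = x$ for some $h \neq 1$ then $(1,1)$ and $(1,h)$ both lie in $\gamma$. The paper makes exactly this point later: $\gamma$ is the graph of a function if and only if the right $G$-action is free (equivalently, the associated Levi monoid is left cancellative). In the general setting of this lemma $\gamma$ is merely a subdirect product, and the later Goursat-type analysis shows it encodes an isomorphism between \emph{quotients} $G_x^+/N_H \cong G_x^-/N_K$, not between $G_x^+$ and $G_x^-$ themselves.

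Fortunately your actual proof of the lemma never uses this uniqueness claim: the closure computation $(g_1g_2)x = x(h_1h_2)$ and the inverse computation $g^{-1}x = xh^{-1}$ are valid for arbitrary pairs $(g_i,h_i)\in\gamma$ regardless of whether $h_i$ is determined by $g_i$. So the lemma is proved; just delete the sentence claiming $\gamma$ is a graph, and temper the closing remark about an ``isomorphism-like correspondence.''
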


Now let $G$ be an arbitrary group, $H,K \leq G$ arbitrary subgroups and $\gamma \subseteq H \times K$
a subgroup that projects onto both components.
In lieu of better terminology, we shall call $(H,\gamma,K)$  {\em group data (derived from $G$)}.
Thus each point of an irreducible $G$-bimodule gives rise to group data.
The proof of the following is straightfoward.

\begin{lemma} Let $X$ be an irreducible $G$-bimodule and let $x,y \in X$. 
Let $\gamma$ be the group data arising from $x$ and $\delta$ the group data arising from $y$.
Suppose that  $x = uyv$.
Then if $g \in G_{x}^{+}$ and $h \in G_{x}^{-}$ are such that $(g,h) \in \gamma$ 
then $u^{-1}gu \in G_{y}^{+}$ and $vhv^{-1} \in G_{y}^{-}$ and $(u^{-1}gu,vhv^{-1}) \in \delta$.
\end{lemma}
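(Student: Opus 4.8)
The plan is to take the defining relation of $\gamma$, namely $gx = xh$, substitute $x = uyv$, and then conjugate the resulting identity into a relation about $y$. First I would rewrite $gx = xh$ as $g(uyv) = (uyv)h$. Using the conformability of the left and right $G$-actions on $X$ (so that all bracketings agree) together with the fact that both are genuine actions (so $g_1(g_2w) = (g_1g_2)w$ and $(wg_1)g_2 = w(g_1g_2)$, and the identity acts trivially), this becomes
$$(gu)\, y\, v = u\, y\, (vh).$$

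Next I would act on this equation on the left by $u^{-1}$ and on the right by $v^{-1}$, once more invoking conformability, to obtain the single clean identity
$$(u^{-1}gu)\, y = y\, (vhv^{-1}).$$

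From this identity the three assertions follow immediately and in one stroke. The left-hand side exhibits $(u^{-1}gu)y$ as an element of $yG$, so by definition $u^{-1}gu \in G_{y}^{+}$; the right-hand side exhibits $y(vhv^{-1})$ as an element of $Gy$, so $vhv^{-1} \in G_{y}^{-}$; and the identity itself is precisely the statement that $(u^{-1}gu)\,\delta\,(vhv^{-1})$, i.e.\ $(u^{-1}gu, vhv^{-1}) \in \delta$, since $\delta$ is the group data arising from $y$.

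There is essentially no obstacle here: the whole content is the elementary computation above, and the only thing that needs attention is bookkeeping with the bimodule axioms to justify the rebracketing and the left/right multiplications. (If one wishes, one can also remark at the end that the map $(g,h) \mapsto (u^{-1}gu, vhv^{-1})$ is thereby seen to be a group isomorphism from $\gamma$ to $\delta$, which is the substantive reason the group data attached to distinct points of an irreducible bimodule are canonically identified up to conjugacy.)
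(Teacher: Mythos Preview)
Your proof is correct and is precisely the straightforward computation the paper has in mind; indeed, the paper omits the proof entirely, merely remarking that it is straightforward. The only content is the substitution $x=uyv$ into $gx=xh$ followed by conjugation, exactly as you carry out.
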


Let  $(H_{1},\gamma_{1},K_{1})$  and  $(H_{2},\gamma_{2},K_{2})$ be group data where $H_{1},H_{2},K_{1},K_{2} \leq G$.
We say that they are {\em conjugate} if there are inner automorphisms $\alpha,\beta \colon G \rightarrow G$
such that $\alpha (H_{1}) = H_{2}$ and $\beta (K_{1}) = K_{2}$ and
$(h,k) \in \gamma_{1} \Leftrightarrow (\alpha (h),\beta (k)) \in \gamma_{2}$.
The above lemma may now be phrased as follows.

\begin{corollary}\label{cor: pinky} 
The group data arising from the choice of two points in an irreducible bimodule are conjugate.
\end{corollary}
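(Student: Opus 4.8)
The plan is to use irreducibility to connect the two chosen points by a pair of group elements, turn those elements into inner automorphisms of $G$, and then apply the preceding lemma \emph{twice} — once for each direction of the conjugacy — so that the one-sided containments the lemma supplies get upgraded to the equalities and the biconditional required by the definition of conjugate group data.

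In more detail, let $x,y \in X$ and let $\gamma$ be the group data arising from $x$ and $\delta$ the group data arising from $y$. Since $X$ is irreducible there are $u,v \in G$ with $x = uyv$; this equation also gives $y = u^{-1}xv^{-1}$. Define $\alpha \colon G \rightarrow G$ by $\alpha(g) = u^{-1}gu$ and $\beta \colon G \rightarrow G$ by $\beta(g) = vgv^{-1}$, both inner automorphisms of $G$, with $\alpha^{-1}(g) = ugu^{-1}$ and $\beta^{-1}(g) = v^{-1}gv$. Applying the preceding lemma to the factorization $x = uyv$ shows that for every $(g,h) \in \gamma$ we have $\alpha(g) \in G_{y}^{+}$, $\beta(h) \in G_{y}^{-}$ and $(\alpha(g),\beta(h)) \in \delta$; in particular $\alpha(G_{x}^{+}) \subseteq G_{y}^{+}$ and $\beta(G_{x}^{-}) \subseteq G_{y}^{-}$, and one implication of the desired equivalence holds.

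For the reverse inclusions, apply the same lemma to $y = u^{-1}xv^{-1}$, interchanging the roles of $x$ and $y$ and using $u^{-1},v^{-1}$ in place of $u,v$. This yields, for every $(g',h') \in \delta$, that $ug'u^{-1} \in G_{x}^{+}$, $v^{-1}h'v \in G_{x}^{-}$ and $(ug'u^{-1},v^{-1}h'v) \in \gamma$, i.e. $(\alpha^{-1}(g'),\beta^{-1}(h')) \in \gamma$. Hence $\alpha^{-1}(G_{y}^{+}) \subseteq G_{x}^{+}$ and $\beta^{-1}(G_{y}^{-}) \subseteq G_{x}^{-}$, which combined with the first application give $\alpha(G_{x}^{+}) = G_{y}^{+}$ and $\beta(G_{x}^{-}) = G_{y}^{-}$. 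Taking $g' = \alpha(g)$ and $h' = \beta(h)$ in the second application shows $(\alpha(g),\beta(h)) \in \delta \Rightarrow (g,h) \in \gamma$, and together with the first application this establishes $(g,h) \in \gamma \Leftrightarrow (\alpha(g),\beta(h)) \in \delta$. Therefore the two pieces of group data are conjugate.

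I do not expect a genuine obstacle: the mathematical content is carried entirely by the preceding lemma, and the only thing needing care is the bookkeeping — choosing $\alpha$ and $\beta$ so that the lemma's conclusions line up with the definition of conjugacy, and noticing that feeding $y = u^{-1}xv^{-1}$ back into the lemma produces exactly the inverse automorphisms, so that the two applications together yield honest equalities and an equivalence rather than mere inclusions and a single implication.
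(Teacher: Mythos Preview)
Your proposal is correct and follows the same route as the paper: the corollary is deduced from the preceding lemma together with the fact that irreducibility furnishes elements $u,v$ with $x = uyv$. The paper itself offers no proof beyond the sentence ``The above lemma may now be phrased as follows,'' so your argument is in fact more complete than the paper's, since you make explicit the point that the lemma as stated only gives one-sided inclusions and one implication, and that a second application with $y = u^{-1}xv^{-1}$ is needed to upgrade these to the equalities $\alpha(G_x^+) = G_y^+$, $\beta(G_x^-) = G_y^-$ and the biconditional demanded by the definition of conjugate group data. One small remark: when you pass from ``for every $(g,h)\in\gamma$ we have $\alpha(g)\in G_y^+$'' to ``$\alpha(G_x^+)\subseteq G_y^+$'' you are tacitly using that $\gamma$ projects onto $G_x^+$; this is part of the definition of group data, so it is fine, but it would not hurt to say so.
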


\begin{lemma} Let $X$ and $Y$ be isomorphic irreducible bimodules over $G$ and choose $x \in X$ and $y \in Y$.
Then the group data associated with $(X,x)$ is conjugate to the group data associated with $(Y,y)$.
\end{lemma}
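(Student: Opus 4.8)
The plan is to use the bimodule isomorphism to transport the group data from $(X,x)$ to a specific point of $Y$, and then to invoke Corollary~\ref{cor: pinky} to absorb the change of base point within $Y$. Since $X$ and $Y$ are bimodules over the \emph{same} group $G$, the isomorphism has the form $(\mathrm{id}_{G},\beta)$, where $\beta \colon X \rightarrow Y$ is a bijection satisfying $\beta(guh) = g\,\beta(u)\,h$ for all $g,h \in G$ and $u \in X$. Set $y' = \beta(x) \in Y$.

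The first step is to show that the group data associated with $(Y,y')$ is not merely conjugate to, but literally equal to, the group data associated with $(X,x)$. For $g \in G$ we have $gx \in xG$ if and only if $\beta(gx) \in \beta(xG) = \beta(x)G$ (using that $\beta$ is a bijection intertwining the right $G$-actions), that is, if and only if $gy' \in y'G$; hence $G_{y'}^{+} = G_{x}^{+}$. Dually $G_{y'}^{-} = G_{x}^{-}$. Similarly, for $g,h \in G$ we have $gx = xh$ if and only if $\beta(gx) = \beta(xh)$ (again because $\beta$ is a bijection commuting with both actions), i.e. if and only if $gy' = y'h$; so the relation $\gamma$ defined using $x$ coincides with the one defined using $y'$. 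Thus the two triples of group data agree.

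The second step is immediate: $Y$ is an irreducible $G$-bimodule containing the two points $y'$ and $y$, so by Corollary~\ref{cor: pinky} the group data arising from $y'$ is conjugate to the group data arising from $y$. Concatenating the two steps, the group data of $(X,x)$ equals that of $(Y,y')$, which is conjugate to that of $(Y,y)$; since equal group data are conjugate via the identity inner automorphism, and conjugacy of group data is clearly transitive, we conclude that the group data of $(X,x)$ is conjugate to the group data of $(Y,y)$.

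The only point requiring any care is the first step, namely verifying that an equivariant bijection forces the three constituents $G_{x}^{+}$, $G_{x}^{-}$, $\gamma$ computed at $x$ to match those computed at $\beta(x)$; but this is a routine consequence of bijectivity together with the intertwining property, so there is no genuine obstacle here.
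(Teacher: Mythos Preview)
Your proof is correct and follows exactly the same approach as the paper's: transport the group data along the isomorphism to the point $\beta(x)\in Y$ (the paper calls it $\theta(x)$), observe that the data at $(X,x)$ and $(Y,\beta(x))$ coincide, and then apply Corollary~\ref{cor: pinky} within $Y$. The only difference is that you spell out the verification of equality of group data in detail, whereas the paper simply declares it ``immediate from the definition of an isomorphism between bimodules over $G$''.
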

\begin{proof} It is immediate from the definition of an isomorphism between bimodules over $G$ that
$(X,x)$ and $(Y,\theta (x))$ have the same group data.
We now use Corollary~\ref{cor: pinky}, to deduce that the group data associated with $(Y,y)$ is conjugate to
the group data associated with $(Y,\theta (x))$.
\end{proof}

We shall show how to construct an irreducible $G$-bimodule from the group data $(H,\gamma,K)$.
On the set $G \times G$, define the relation $\equiv$ as follows
$$(g_{1},h_{1}) \equiv (g_{2},h_{2}) \Leftrightarrow g_{2}^{-1}g_{1} \in H, h_{2}h_{1}^{-1} \in K, 
\mbox{and }
g_{2}^{-1}g_{1} \, \gamma \, h_{2}h_{1}^{-1}.$$
The proof that $\equiv$ is an equivalence relation follows readily from the assumption that $H$, $K$ and $\gamma$
are all subgroups.
We denote the $\equiv$-equivalence class containing the pair $(g,h)$ by $[g,h]$.
The set of $\equiv$-equivalence classes is denoted by $\overline{X} = \overline{X}(H,\gamma,K)$.
Now the set $G \times G$ is a $G$-bimodule in the obvious way, and 
the equivalence $\equiv$ satisfies the following two conditions for any $g \in G$
$$(g_{1},h_{1}) \equiv (g_{2},h_{2}) \mbox{ implies } (gg_{1},h_{1}) \equiv (gg_{2},h_{2})$$
and 
$$(g_{1},h_{1}) \equiv (g_{2},h_{2}) \mbox{ implies } (g_{1},h_{1}g) \equiv (g_{2},h_{2}g).$$
We may therefore turn $\overline{X}$ into a $G$-bimodule by defining
$$g[g_{1},h_{1}] = [gg_{1},h_{1}] \mbox{ and } [g_{1},h_{1}]g = [g_{1},h_{1}g].$$

Let $x = [1,1] \in \overline{X}$.
Observe that $[g,h] = g[1,1]h$ and so the bimodule $\overline{X}$ is irreducible.
We shall calculate the subgroup of $G$ that consists of those elements $g \in G$ such that $gx \in xG$.
Let $h \in H$.
By assumption, there is $k \in K$ such that $h \, \gamma \, k$.
Now 
$$h[1,1] = [h,1] = [1,k] = [1,1]k$$
and so $H \subseteq G_{x}^{+}$.
It is easy to check that this is equality.
Together with the dual result we have proved part (1) of the following.

\begin{lemma} \mbox{}
\begin{enumerate}
\item From group data $(H,\gamma,K)$ we may construct an irreducible $G$-bimodule $\overline{X}(H,\gamma,K)$.
Furthermore, there is a point of $\overline{X}(H,\gamma,K)$ whose associated group data is  $(H,\gamma,K)$.

\item Let  $(H_{1},\gamma_{1},K_{1})$  and  $(H_{2},\gamma_{2},K_{2})$ be conjugate group data where $H_{1},H_{2},K_{1},K_{2} \leq G$
and the inner automorphisms  $\alpha,\beta \colon G \rightarrow G$
such that $\alpha (H_{1}) = H_{2}$ and $\beta (K_{1}) = K_{2}$ and
$(h,k) \in \gamma_{1} \Leftrightarrow (\alpha (h),\beta (k)) \in \gamma_{2}$
are given by $\alpha (g) = a^{-1}ga$ and $\beta (g) = b^{-1}gb$.
Then the bimodules $\overline{X}(H_{1},\gamma_{1},K_{1})$  and $\overline{X}(H_{2},\gamma_{2},K_{2})$ are isomorphic.

\end{enumerate}
\end{lemma}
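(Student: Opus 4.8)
The plan is to write down an explicit isomorphism $(\mathrm{id}_G, \phi)$ from $\overline{X}(H_1,\gamma_1,K_1)$ to $\overline{X}(H_2,\gamma_2,K_2)$, where $\phi$ is induced by a suitable translation on $G \times G$. The natural guess is to use the conjugating elements $a$ and $b$: define $\phi([g_1,h_1]_1) = [g_1 a, b^{-1} h_1]_2$, where $[\,\cdot\,]_i$ denotes the $\equiv$-class in $\overline{X}(H_i,\gamma_i,K_i)$. The first task is to check this is well-defined, i.e.\ that $(g_1,h_1) \equiv_1 (g_2,h_2)$ implies $(g_1 a, b^{-1}h_1) \equiv_2 (g_2 a, b^{-1}h_2)$. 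Unravelling the definition of $\equiv_1$, we have $g_2^{-1}g_1 \in H_1$, $h_2 h_1^{-1} \in K_1$, and $g_2^{-1}g_1 \,\gamma_1\, h_2 h_1^{-1}$. For $\equiv_2$ we must verify $(g_2 a)^{-1}(g_1 a) = a^{-1}(g_2^{-1}g_1)a = \alpha(g_2^{-1}g_1) \in H_2$ — which holds since $\alpha(H_1)=H_2$ — and similarly $(b^{-1}h_2)(b^{-1}h_1)^{-1} = b^{-1}(h_2 h_1^{-1})b = \beta(h_2 h_1^{-1}) \in K_2$ since $\beta(K_1)=K_2$; finally $\bigl(\alpha(g_2^{-1}g_1), \beta(h_2 h_1^{-1})\bigr) \in \gamma_2$ follows from the hypothesis $(h,k)\in\gamma_1 \Leftrightarrow (\alpha(h),\beta(k))\in\gamma_2$ applied to the pair $(g_2^{-1}g_1, h_2 h_1^{-1}) \in \gamma_1$.

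Next I would check that $(\mathrm{id}_G,\phi)$ is a bimodule morphism, i.e.\ $\phi(g\,[g_1,h_1]_1\,g') = g\,\phi([g_1,h_1]_1)\,g'$ for all $g,g' \in G$. Using the module structure on $\overline{X}$, the left side is $\phi([g g_1, h_1 g']_1) = [g g_1 a, b^{-1} h_1 g']_2$, and the right side is $g\,[g_1 a, b^{-1}h_1]_2\,g' = [g g_1 a, b^{-1} h_1 g']_2$; these agree on the nose, so no computation beyond bookkeeping is needed. Then I would exhibit the inverse: the map $\psi([g_2,h_2]_2) = [g_2 a^{-1}, b h_2]_1$ is well-defined by the symmetric argument (using $\alpha^{-1}(H_2)=H_1$, $\beta^{-1}(K_2)=K_1$, which follow from $\alpha,\beta$ being automorphisms), and $\psi\phi$ and $\phi\psi$ are clearly identities. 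Hence $(\mathrm{id}_G,\phi)$ is an isomorphism of $G$-bimodules, and $\overline{X}(H_1,\gamma_1,K_1) \cong \overline{X}(H_2,\gamma_2,K_2)$.

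The only point requiring genuine care — and the step I expect to be the main (if modest) obstacle — is getting the left/right placement of the conjugating factors correct so that well-definedness works out. The asymmetry in the definition of $\equiv$ ($g_2^{-1}g_1$ on the left but $h_2 h_1^{-1}$ on the right) forces the translation to act by right multiplication by $a$ in the first coordinate and by left multiplication by $b^{-1}$ in the second; with any other convention the cosets would not match up with $\alpha(g)=a^{-1}ga$ and $\beta(g)=b^{-1}gb$. Once the placements are pinned down, everything else is a routine verification that the subgroup and $\gamma$-relation conditions transport correctly under $\alpha$ and $\beta$, which is exactly what the conjugacy hypothesis provides. I would present the well-definedness check in full and merely remark that the morphism property and the construction of the inverse are analogous.
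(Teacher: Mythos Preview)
Your proposal is correct and follows exactly the same approach as the paper: the paper defines the isomorphism by $\theta[g,h] = [ga,\,b^{-1}h]$ and simply asserts that it is an isomorphism of $G$-bimodules, whereas you have supplied the routine verifications (well-definedness, bimodule compatibility, explicit inverse) that the paper leaves to the reader. Your discussion of why the conjugating factors must appear as right multiplication by $a$ and left multiplication by $b^{-1}$ is a helpful clarification of a point the paper does not comment on.
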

\begin{proof} It only remains to prove (2).
Define $\theta \colon \overline{X}(H_{1},\gamma_{1},K_{1}) \rightarrow \overline{X}(H_{2},\gamma_{2},K_{2})$ 
by $\theta [g,h] = [ga,b^{-1}h]$.
Then $\theta$ defines an isomorphism of $G$-bimodules.
\end{proof}

The key result is the following.

\begin{proposition}\label{prop: perky} 
Let $X$ be an irreducible $G$-bimodule.
Choose $x \in X$ and let the group data associated with $x$ be $(H,\gamma,K)$.
Then $X$ is isomorphic to $\overline{X}(H,\gamma,K)$ as a $G$-bimodule.
\end{proposition}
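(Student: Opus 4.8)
The plan is to exhibit an explicit bimodule isomorphism $\phi \colon X \rightarrow \overline{X}(H,\gamma,K)$ over $G$, using the fact that both bimodules are irreducible and that they share the point $x$ (respectively $[1,1]$) with the same associated group data. Since $X$ is irreducible, every element of $X$ has the form $gxh$ for some $g,h \in G$, so the obvious candidate is to set $\phi(gxh) = [g,h] = g[1,1]h$. The map in the other direction, $[g,h] \mapsto gxh$, is the natural candidate for the inverse. Everything then reduces to checking that both assignments are well defined; once that is done, the fact that they are mutually inverse $G$-bimodule morphisms is immediate from the formulas $\phi(g'(gxh)h') = [g'g, hh']$ and the corresponding identity on $\overline{X}$.

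The first step is to show $\phi$ is well defined: if $g_1 x h_1 = g_2 x h_2$ in $X$, I must show $(g_1,h_1) \equiv (g_2,h_2)$, i.e. $g_2^{-1}g_1 \in H$, $h_2 h_1^{-1} \in K$, and $g_2^{-1}g_1 \,\gamma\, h_2 h_1^{-1}$. From $g_1 x h_1 = g_2 x h_2$ we get $(g_2^{-1}g_1) x = x (h_2 h_1^{-1})$. Writing $g = g_2^{-1}g_1$ and $h = h_2 h_1^{-1}$, this says $gx = xh$; in particular $gx \in xG$, so $g \in G_x^+ = H$ and dually $xh = gx \in Gx$ so $h \in G_x^- = K$, and the relation $gx = xh$ is precisely $g \,\gamma\, h$ by the definition of $\gamma$. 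The converse computation — that $(g_1,h_1) \equiv (g_2,h_2)$ forces $g_1 x h_1 = g_2 x h_2$ — is the same chain of equalities read backwards, using that membership in $H$ and $K$ together with the $\gamma$-relation exactly encodes $gx = xh$. This simultaneously shows $[g,h] \mapsto gxh$ is well defined.

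The remaining steps are routine: $\phi$ is surjective because $\overline{X}$ is irreducible (every class is $g[1,1]h$) and injective by the well-definedness of its inverse; and $\phi$ is a $G$-bimodule morphism since $\phi(g'(gxh)) = \phi((g'g)xh) = [g'g,h] = g'[g,h] = g'\phi(gxh)$, with the right action handled symmetrically. The one point requiring genuine care — and the step I expect to be the main obstacle, though a mild one — is the well-definedness argument above, specifically making sure the "if and only if" in the definition of $\equiv$ matches up exactly with the single equation $gx = xh$ and that nothing is lost when translating between the subset conditions ($g \in H$, $h \in K$) and the relational condition ($g \,\gamma\, h$); the earlier lemma identifying $G_x^+$ with $H$, $G_x^-$ with $K$, and $\gamma$ as the graph of the induced correspondence is exactly what closes this gap.
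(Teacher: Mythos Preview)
Your proposal is correct and follows essentially the same approach as the paper: define $\theta(y) = [g,h]$ whenever $y = gxh$, verify well-definedness by reducing $g_1xh_1 = g_2xh_2$ to the single equation $(g_2^{-1}g_1)x = x(h_2h_1^{-1})$ and reading off $g_2^{-1}g_1 \in H$, $h_2h_1^{-1} \in K$, and $(g_2^{-1}g_1)\,\gamma\,(h_2h_1^{-1})$ directly from the definitions of $G_x^{+}$, $G_x^{-}$, and $\gamma$. The paper's proof is terser (it leaves the bimodule-morphism and bijectivity checks as routine), but your expanded treatment of the inverse map and the left/right equivariance is exactly what that routine verification amounts to.
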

\begin{proof} We define a function $\theta \colon X \rightarrow \overline{X}$ by
$\theta (y) = [g_{1},h_{1}]$ if $y = g_{1}xh_{1}$.
This is well-defined because 
$g_{1}xh_{1} = g_{2}xh_{2}$ if and only if $g_{2}^{-1}g_{1}x = xh_{2}h_{1}^{-1}$
if and only if $g_{2}^{-1}g_{1} \in H$, $h_{2}h_{1}^{-1} \in K$ and $g_{2}g_{1}^{-1} \, \gamma \, h_{2}h_{1}^{-1}$.
It follows that $(g_{1},h_{1}) \equiv (g_{2},h_{2})$.
It is now routine to check that $\theta$ is actually an isomorphism of $G$-bimodules.
\end{proof}

The nature of group data is a little elusive but we shall now characterize them in a way that is more intuitive.
A {\em divisor} of a group $G$ is a homomorphic image of a subgroup of $G$.
Thus it is a group that is isomorphic to one of the form $H/N$ where $H$ is a subgroup of $G$ and $N$ is a normal subgroup of $H$.
The following lemma can easily be derived from Goursat's lemma \cite{G}, 
though we give the details\footnote{ The authors are grateful to Zoe O'Connor (Heriot-Watt) for providing a reference for what we had originally taken to be folklore.}.

\begin{lemma} Let $G$ be a group.
Then group data derived from $G$ corresponds to isomorphisms between divisors of $G$.
\end{lemma}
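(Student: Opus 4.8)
The plan is to set up a correspondence in both directions between group data $(H,\gamma,K)$ derived from $G$ and isomorphisms between divisors of $G$, and then check the two constructions are mutually inverse. Throughout I would think of $\gamma$ not as an abstract subgroup of $H\times K$ but, via the fact that it projects onto both components, as the graph of a relation; the content of Goursat's lemma is precisely that such a subgroup is the graph of an isomorphism between a quotient of $H$ and a quotient of $K$.

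First I would go from group data to a divisor isomorphism. Given $(H,\gamma,K)$ with $\gamma\leq H\times K$ projecting onto both components, set
$$N = \{h \in H \colon (h,1) \in \gamma\}, \qquad M = \{k \in K \colon (1,k) \in \gamma\}.$$
Because $\gamma$ is a subgroup, $N$ is a subgroup of $H$ and $M$ a subgroup of $K$; because $\gamma$ projects onto both components, a short computation (conjugating $(h,1)\in\gamma$ by an arbitrary $(h',k')\in\gamma$) shows $N\trianglelefteq H$ and dually $M\trianglelefteq K$. Then the rule $hN \mapsto kM$ whenever $(h,k)\in\gamma$ is well-defined and injective (that is exactly what $N$ and $M$ were cooked up to guarantee) and is surjective by the projection property; it is a homomorphism because $\gamma$ is closed under multiplication. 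This yields an isomorphism $\varphi\colon H/N \to K/M$ between two divisors of $G$.

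Conversely, given subgroups $H,K\leq G$, normal subgroups $N\trianglelefteq H$, $M\trianglelefteq K$, and an isomorphism $\varphi\colon H/N \to K/M$, I would define
$$\gamma = \{(h,k) \in H\times K \colon \varphi(hN) = kM\}.$$
One checks directly that $\gamma$ is a subgroup of $H\times K$ (using that $\varphi$ is a homomorphism) and that it projects onto $H$ (since $\varphi$ is total) and onto $K$ (since $\varphi$ is surjective), so $(H,\gamma,K)$ is group data. Finally I would verify the two passages are inverse to one another: starting from $\varphi$, forming $\gamma$ and then recovering $N' = \{h : (h,1)\in\gamma\}$ gives back $N$ since $(h,1)\in\gamma$ iff $\varphi(hN)=M$ iff $h\in N$, and similarly $M' = M$, and the recovered isomorphism agrees with $\varphi$ on each coset; starting from $(H,\gamma,K)$, passing to $\varphi$ and back reconstructs exactly the same set of pairs by the same coset bookkeeping. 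I would be slightly informal about the phrase ``corresponds to'' in the statement, reading it as this bijection between group data and (isomorphism classes of, or literally pinned-down) divisor isomorphisms.

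The main obstacle is entirely in the first direction: proving that $N$ and $M$ are \emph{normal} in $H$ and $K$ respectively, rather than merely subgroups. This is where the hypothesis that $\gamma$ projects onto both components is indispensable — without it one only gets that $N\trianglelefteq$ the first-coordinate projection of $\gamma$. The rest is the routine verification, familiar from the proof of Goursat's lemma, that a subgroup of a direct product which surjects on both factors is the graph of an isomorphism of the evident quotients; since the paper explicitly permits citing Goursat's lemma, the write-up can be kept short, spelling out only the normality argument and the well-definedness of $hN\mapsto kM$ in detail.
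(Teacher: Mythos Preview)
Your proposal is correct and follows essentially the same route as the paper: both define $N=\{h:(h,1)\in\gamma\}$ and $M=\{k:(1,k)\in\gamma\}$, show these are normal in $H$ and $K$, build the isomorphism $H/N\to K/M$ via $hN\mapsto kM$ when $(h,k)\in\gamma$, and for the converse pull back an isomorphism $\varphi$ to $\gamma=\{(h,k):\varphi(hN)=kM\}$. Your write-up is in fact a bit more thorough than the paper's, which asserts normality and well-definedness without argument and does not explicitly verify that the two passages are mutually inverse.
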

\begin{proof}
Let $(H,\gamma,K)$ be group data where $H,K \leq G$.
Define
$$N_{H} = \{ h \in H \colon (h,1) \in \gamma \}
\mbox{ and } 
N_{K} = \{ k \in K \colon (1,k) \in \gamma \}.$$
Then $N_{H} \trianglelefteq H$ and $N_{K} \trianglelefteq K$.
We denote by $\alpha \colon H \rightarrow H/N_{H}$ 
and
$\beta \colon K \rightarrow K/N_{K}$
the associated natural maps.
Define $\theta \colon H/N_{H} \rightarrow K/N_{K}$
by $hN_{H} \mapsto kN_{K}$ if $(h,k) \in \gamma$.
Then in fact $\theta$ is an isomorphism.
Observe that $(h,k) \in \gamma$ if and only if $\theta (\alpha (h)) = \beta (k)$.

Conversely, 
let 
$N_{H} \trianglelefteq H$ and $N_{K} \trianglelefteq K$
and denote by 
$\alpha \colon H \rightarrow H/N_{H}$ 
and
$\beta \colon K \rightarrow K/N_{K}$
the associated natural maps
and let 
$\theta \colon H/N_{H} \rightarrow K/N_{K}$
be an isomorphism.
Define $\gamma \subseteq H \times K$ by
$(h,k) \in \gamma$ if and only if $\theta (\alpha (h)) = \beta (k)$.
It is routine to check that $\gamma$ is a subgroup of $H \times K$ and projects onto each component.
\end{proof}

We shall examine some special cases of our construction but first, we need some definitions.
A {\em partial endomorphism} $\phi$ of a group $G$ is a surjective homomorphism $\phi \colon H \rightarrow K$
where $H$ and $K$ are subgroups of $G$.
If $H$ has finite index in $G$ it is usual to call it a {\em virtual endomorphism}.
The subgroup $H$ is called the {\em domain of definition} of $\phi$.
If $\phi$ is also an isomorphism, we say that it is a {\em partial automorphism} of $G$.

\begin{lemma} Let $X$ be an irreducible $G$-bimodule.
Choose $x \in X$ and let $(H,\gamma,K)$ be the associated group data.
\begin{enumerate}

\item The right $G$-action is free if and only if $\gamma$ is the graph of a partial endomorphism of $G$ from $H$ onto $K$.

\item  The left and right $G$-actions are free if and only if $\gamma$ is the graph of a partial automorphism of $G$ from $H$ onto $K$.

\end{enumerate}
\end{lemma}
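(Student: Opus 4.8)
The plan is to unwind the definitions of the two freeness conditions in terms of the group data $(H,\gamma,K)$ and match them against the condition that $\gamma$ be the graph of a (single-valued, respectively bijective) homomorphism. Recall that $\gamma = \{(g,h) \in G_x^+ \times G_x^- : gx = xh\}$, that $H = G_x^+$ and $K = G_x^-$, and that $\gamma$ is already known to be a subgroup of $H \times K$ projecting onto both components. Since $\gamma$ projects onto both components, the only thing standing between $\gamma$ and being the graph of an isomorphism $H \to K$ is single-valuedness in each direction: $\gamma$ is the graph of a function $H \to K$ iff $(h,1) \in \gamma$ forces $h = 1$, and it is the graph of an \emph{injective} function iff additionally $(1,k) \in \gamma$ forces $k = 1$. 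Using the lemma above relating group data to isomorphisms of divisors, $N_H = \{h \in H : (h,1)\in\gamma\}$ and $N_K = \{k \in K : (1,k)\in\gamma\}$, so $\gamma$ is the graph of a partial endomorphism precisely when $N_H$ is trivial, and a partial automorphism precisely when both $N_H$ and $N_K$ are trivial.

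So I would first translate the right $G$-action being free into a statement about $N_H$. Suppose $(h,1) \in \gamma$, i.e.\ $h \in H$ and $hx = x1 = x$. If the right action is free this says nothing directly, so instead I run the argument the other way: right-freeness means $xg = x \Rightarrow g = 1$. The connection is: $(h,1) \in \gamma$ means $hx = x$, and I want to show this implies the right action has a nontrivial stabilizer unless $h=1$. Actually the cleaner route is: the right action is free iff for all $y \in X$ and $g \in G$, $yg = y$ implies $g = 1$; by irreducibility it suffices to check this at the single point $x$, because $y = uxv$ and $yg = y$ gives $uxvg = uxv$, hence $x(vgv^{-1}) = x$, so the stabilizer of $y$ is a conjugate of the stabilizer of $x$. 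Thus the right action is free iff $\mathrm{Stab}_r(x) := \{g : xg = x\}$ is trivial. Now if $(h,1), (h',1) \in \gamma$ both hold, i.e.\ $hx = x = h'x$, then $(h^{-1}h')x = x$ — wait, that is a \emph{left} stabilizer statement, not a right one. Let me fix the bookkeeping: if $(h,1)\in\gamma$ then $hx = x$; if also $(h,k)\in\gamma$ then $hx = xk$, so $xk = x$, i.e.\ $k \in \mathrm{Stab}_r(x)$. Hence $\gamma$ fails to be single-valued as a partial function $H \to K$ exactly when there exist $h$ and $k \neq 1$ with $(h,k), (h,1) \in \gamma$; since $\gamma$ is a subgroup, $(h,k)(h,1)^{-1} = (1, k) \in \gamma$, and $(1,k) \in \gamma$ means $x = xk$. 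So $N_H$ being trivial is equivalent to $N_K$ being trivial is equivalent to... no — $(1,k)\in\gamma$ gives $xk = x$, controlled by the right stabilizer; and I should track $N_H$ via $(h,1)\in\gamma \Leftrightarrow hx = x$, controlled by the \emph{left} stabilizer. This is the place where care is needed.

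Here is the resolution, which I would write out cleanly: $N_H = \{h : hx = x\} = \mathrm{Stab}_\ell(x)$ and $N_K = \{k : xk = x\} = \mathrm{Stab}_r(x)$. The relation $\gamma$, as a subgroup projecting onto both components, is the graph of a well-defined homomorphism $H \to K$ iff for each $h$ there is a unique $k$ with $(h,k)\in\gamma$; uniqueness fails iff $(1,k)\in\gamma$ for some $k\neq 1$, i.e.\ iff $N_K = \mathrm{Stab}_r(x) \neq 1$. So $\gamma$ is the graph of a (necessarily surjective, since $\gamma$ projects onto $K$) homomorphism $H \to K$ iff the right action is free at $x$ iff (by the conjugacy reduction above) the right action is free. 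That proves (1). For (2), $\gamma$ is additionally the graph of an \emph{injective} map iff for each $k$ there is a unique $h$, which fails iff $(h,1)\in\gamma$ for some $h\neq 1$, i.e.\ iff $N_H = \mathrm{Stab}_\ell(x)\neq 1$, i.e.\ iff the left action fails to be free. Combining, both actions are free iff $\gamma$ is the graph of a bijective homomorphism $H\to K$, i.e.\ a partial automorphism. The surjectivity onto $K$ in both parts is free because $\gamma$ projects onto $K$ by the earlier lemma, so no extra work is needed there.

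The main obstacle is purely the left/right bookkeeping: one must be scrupulous that $G_x^+$ is defined by $gx \in xG$ and $G_x^-$ by $xg \in Gx$, that $N_H$ involves $(h,1)\in\gamma$ (a left-stabilizer condition $hx=x$) while $N_K$ involves $(1,k)\in\gamma$ (a right-stabilizer condition $xk=x$), and hence that \emph{right}-freeness kills $N_K$ (giving single-valuedness $H\to K$) while \emph{left}-freeness kills $N_H$ (giving injectivity). The only genuinely substantive small lemma to record is that freeness of an action on an irreducible bimodule can be tested at the single chosen point $x$, since stabilizers at other points are conjugates of the stabilizer at $x$; this follows immediately from irreducibility ($y = uxv$) and conformability of the two actions. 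Everything else is unwinding definitions and invoking the previous lemma identifying group data with divisor isomorphisms.
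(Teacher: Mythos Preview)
Your argument is correct and follows essentially the same approach as the paper: both identify single-valuedness of $\gamma$ with triviality of $N_K = \{k : xk = x\}$ (right-freeness at $x$) and injectivity with triviality of $N_H = \{h : hx = x\}$ (left-freeness at $x$). You are in fact more careful than the paper on one point --- you explicitly invoke irreducibility to pass from freeness at the single point $x$ to global freeness via conjugacy of stabilizers, whereas the paper's converse tacitly checks freeness only at $x$; do note, however, that your opening paragraph has the roles of $N_H$ and $N_K$ swapped (single-valuedness corresponds to $N_K$ trivial, not $N_H$), which you correctly untangle in the later paragraphs.
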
 
\begin{proof} (1) Suppose first that the right $G$-action is free.
Let $(g,h_{1}),(g,h_{2}) \in \gamma$.
Then
$gx = xh_{1} = xh_{2}$.
Thus $xh_{1}h_{2}^{-1} = x$ and so by assumption $h_{1}h_{2}^{-1} = 1$ giving $h_{1} = h_{2}$.
It follows that $\gamma$ is the graph of a function.
To prove the converse, assume that $\gamma$ is the graph of a function.
Suppose that $xh = x$.
Then $(1,h) \in \gamma$.
But $(1,1) \in \gamma$.
It follows that $h = 1$ and the right action is free.
The proof of (2) is now immediate by symmetry.
\end{proof}

The following theorem pulls together what we have proved in this section and the previous one.

\begin{theorem} \mbox{}
\begin{enumerate}

\item Each irreducible Levi monoid with group of units $G$ can be constructed from an isomorphism between two divisors of $G$. 

\item Each irreducible left Rees monoid with group of units $G$ can be constructed from a partial endomorphism of $G$.

\item Each irreducible Rees monoid with group of units $G$ can be constructed from a partial automorphism of $G$.

\end{enumerate}
\end{theorem}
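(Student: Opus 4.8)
The plan is to assemble this theorem directly from the three correspondences already established, so that very little new work is needed. First I would recall the chain of identifications: by Theorem~\ref{the: first_theorem}(3) every Levi monoid is (isomorphic to) the tensor monoid $\mathsf{T}(X)$ of its associated bimodule of atoms $X$, and being \emph{irreducible} means precisely (by Lemma~\ref{le: max_ideal}) that $X$ is an irreducible $G$-bimodule, where $G$ is the group of units. Thus constructing an irreducible Levi monoid is the same thing as constructing an irreducible $G$-bimodule and then forming its tensor monoid. So part (1) reduces to the claim that irreducible $G$-bimodules are ``the same as'' isomorphisms between divisors of $G$.

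For that reduction I would invoke Proposition~\ref{prop: perky}: every irreducible $G$-bimodule $X$, once a point $x\in X$ is chosen, is isomorphic to $\overline{X}(H,\gamma,K)$ for the group data $(H,\gamma,K)$ associated with $x$; and conversely every group data $(H,\gamma,K)$ gives rise to the irreducible bimodule $\overline{X}(H,\gamma,K)$. Then the penultimate lemma of the section (``group data derived from $G$ corresponds to isomorphisms between divisors of $G$'') translates group data into an isomorphism $\theta\colon H/N_H \to K/N_K$ between divisors of $G$, and vice versa. Stringing these together: an isomorphism between two divisors of $G$ yields group data, hence an irreducible $G$-bimodule $\overline{X}$, hence the irreducible Levi monoid $\mathsf{T}(\overline{X})$ with group of units $G$; and by the two structure theorems every irreducible Levi monoid with group of units $G$ arises this way. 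That proves (1).

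For (2) and (3) I would simply feed in the last lemma of the section, which characterizes freeness of the actions in terms of $\gamma$ being a graph. A left Rees monoid is exactly a left cancellative Levi monoid, which by the Lemma on cancellativity of tensor monoids corresponds to $X$ being a covering bimodule, i.e.\ right free; and by that final lemma right freeness of the action is equivalent to $\gamma$ being the graph of a partial endomorphism $H\to K$ of $G$. Under the divisor dictionary a partial endomorphism is just the case $N_K = 1$, i.e.\ the ``isomorphism of divisors'' is actually a surjection $H \twoheadrightarrow K$ onto an honest subgroup. So each irreducible left Rees monoid with group of units $G$ is built from a partial endomorphism of $G$, giving (2). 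Similarly a Rees monoid is a cancellative Levi monoid, corresponding to a bifree bimodule, which by the same lemma forces $\gamma$ to be the graph of a partial \emph{automorphism} $H\to K$; this is the case $N_H = N_K = 1$, so (3) follows.

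The only point that needs a little care — and the closest thing to an obstacle — is the bookkeeping about \emph{choices}: the group data, and hence the divisor isomorphism, depends on the chosen point $x\in X$, so ``constructed from'' must be understood up to the conjugacy of group data established in Corollary~\ref{cor: pinky} (equivalently, up to composing with inner automorphisms of $G$), and one should note that conjugate group data yield isomorphic bimodules and hence isomorphic Levi monoids. I would state the theorem with that understanding, so that the correspondence is genuinely a bijection between isomorphism classes of irreducible Levi monoids with group of units $G$ and conjugacy classes of divisor isomorphisms of $G$; everything else is a direct citation of the lemmas and propositions proved above, with no new computation required.
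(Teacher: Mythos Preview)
Your proposal is correct and matches the paper's approach exactly: the paper presents this theorem without a separate proof, stating only that it ``pulls together what we have proved in this section and the previous one,'' and your assembly of Theorem~\ref{the: first_theorem}(3), Lemma~\ref{le: max_ideal}, Proposition~\ref{prop: perky}, the Goursat-type lemma on divisors, and the final lemma on freeness is precisely that synthesis. Your remark on the conjugacy ambiguity is a welcome clarification, though note the theorem as stated only claims ``can be constructed from,'' not a bijective correspondence, so this bookkeeping is not strictly required.
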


\section{Left Rees monoids and self-similar group actions}

For the remainder of this paper, we restrict our attention to left Rees monoids.
In this section, we shall recall the main results from  \cite{Lawson2008a} that will be needed and then we shall examine some simple
consequences.
Our main tool is the {\em Zappa-Sz\'ep} product of two monoids (see \cite{Brin} and the references there) which we shall define below.

\subsection{The correspondence}

Let $G$ be a group, $X$ a set, $G \times X^{\ast} \rightarrow X^{\ast}$ an operation, called the {\em action}, denoted by
$(g,x) \mapsto g \cdot x$, and $G \times X^{\ast} \rightarrow G$ an operation, called the {\em restriction}, denoted by $(g,x) \mapsto g|_{x}$,
such that the following eight axioms hold:
\begin{description}
\item[{\rm (SS1)}] $1 \cdot x = x$.
\item[{\rm (SS2)}] $(gh) \cdot x = g \cdot (h \cdot x)$.
\item[{\rm (SS3)}] $g \cdot 1 = 1$.
\item[{\rm (SS4)}] $g \cdot (xy) = (g \cdot x)(g|_{x} \cdot y)$.
\item[{\rm (SS5)}] $g|_{1} = g$.
\item[{\rm (SS6)}] $g|_{xy} = (g|_{x})|_{y}$.
\item[{\rm (SS7)}] $1|_{x} = 1$.
\item[{\rm (SS8)}] $(gh)|_{x} = g|_{h \cdot x} h|_{x}$.
\end{description}
Then we say that there is a {\em self-similar action of the group $G$ on the free monoid $X^{\ast}$}.
When we refer to a `self-similar group action $(G,X)$', we shall assume that the action and restriction have been chosen and are fixed.
It is easy to show that such an action is {\em length-preserving}, in the sense that $|g \cdot x| = |x|$ for all $x \in X^{\ast}$,
and {\em prefix-preserving}, in the sense that $x \preceq y$ implies that $g \cdot x \preceq g \cdot y$. 
If $x \in G$ then $G_{x}$ is the stabilizer of $x$ in $G$ with respect to the action and so a subgroup of $G$.

\begin{remark} {\em Our definition of what is meant by a self-similar group action is more general
than the one most visible in \cite{N1}.
We shall explain in what way below.}
\end{remark}

\begin{remark} {\em The axioms above fall into three groups.
Axioms (SS1), (SS2) and (SS3) say that the group $G$ acts on the left on the pointed set $(X^{\ast},1)$
fixing the distinguished element $1 = \varepsilon$.
Axioms (SS5), (SS6) and (SS7) say that the monoid $X^{\ast}$ acts on the right on the pointed set $(G,1)$
fixing the distinguished element $1$.
Axioms (SS4) and (SS8) link the two actions together.
The former shows how the left group action interacts with the monoid product in $X^{\ast}$,
whereas the latter shows how the right monoid action interacts with the group product in $G$.
If we define $g \circ x = g|_{x}$ then there is a perfect left-right symmetry between $\cdot$ and $\circ$ in the axioms.
However, there is an asymmetry in that $G$ is a group whereas $X^{\ast}$ is a cancellative monoid.}
\end{remark}

The following theorem was proved in detail in \cite{Lawson2008a} and so we shall only sketch it here.

\begin{theorem}\label{the: correspondence} 
There is a correspondence between the class of self-similar group actions and left Rees monoids.
\end{theorem}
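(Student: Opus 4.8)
**The plan is to construct a correspondence in both directions and check that the two constructions are mutually inverse (up to isomorphism of the relevant structures).**

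First I would go from a self-similar group action $(G,X)$ to a left Rees monoid. The natural tool, as the introduction announces, is the Zappa--Sz\'ep product $X^{\ast} \bowtie G$: its underlying set is $X^{\ast} \times G$, with multiplication $(x,g)(y,h) = (x(g\cdot y), g|_{y}h)$. The eight axioms (SS1)--(SS8) are precisely what is needed for this to be associative with identity $(1,1)$; axioms (SS1)--(SS3) and (SS5)--(SS7) give that $X^{\ast}$ and $G$ embed as submonoids, and every element factors uniquely as $(x,1)(1,g)$. One then verifies (LR1)--(LR4) of the proposition characterizing left Rees monoids: left cancellativity comes from left cancellativity of $X^{\ast}$ together with the fact that $G$ acts by injections (prefix-preservation and length-preservation, noted in the text); right rigidity (LR2) follows because principal right ideals are controlled by the prefix order on $X^{\ast}$ lifted through the action; (LR3) is the finiteness of the set of prefixes of a string; and (LR4) holds since $X$ is nonempty. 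Equivalently one could exhibit the associated bimodule directly: the atoms are the pairs $(x,g)$ with $x \in X$, and the $(G,G)$-bimodule structure is $h(x,g) = (h\cdot x, h|_{x}g)$ and $(x,g)h = (x,gh)$; right freeness of this bimodule (hence left cancellativity, by the lemma relating bifreeness to cancellativity) is immediate from the definition.

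Conversely, given a left Rees monoid $S$ with group of units $G$ and set of atoms $X_{0}$, I would first choose, as in part (5) of Lemma~\ref{le: bradbury}, a transversal $X \subseteq X_{0}$ of the generators of the maximal proper principal right ideals, so that $S = \langle X\rangle G$ and every atom is uniquely $xg$ with $x \in X$, $g \in G$. The submonoid $\langle X \rangle$ is then free on $X$ (it is a Levi monoid with trivial group of units, by the Levi theorem quoted in the excerpt), so $\langle X\rangle \cong X^{\ast}$. The key structural fact, coming from left cancellativity and equidivisibility, is that for $g \in G$ and $x \in X$ the element $gx$ is again an atom, so there are unique $y \in X$ and $h \in G$ with $gx = yh$; define $g \cdot x = y$ and $g|_{x} = h$, then extend to $X^{\ast}$ by the forced recursions $g\cdot(xw) = (g\cdot x)(g|_{x}\cdot w)$ and $g|_{xw} = (g|_{x})|_{w}$, with $g\cdot 1 = 1$, $g|_{1} = g$. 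Checking (SS1)--(SS8) is then a matter of pushing the identity $g(xw) = (g\cdot x)(g|_{x}\cdot w)\,\big(\text{restriction}\big)$ around and using uniqueness of normal forms; this is the routine-but-essential verification step. Finally $S \cong X^{\ast}\bowtie G$ via $xg \mapsto (x,g)$.

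**The main obstacle** is not any single hard inequality but the bookkeeping needed to show the two passages are mutually inverse, and in particular that the self-similar action recovered from $X^{\ast}\bowtie G$ is the original one — this requires that the transversal $X$ chosen inside $X^{\ast}\bowtie G$ can be taken to be $\{(x,1): x\in X\}$, which in turn uses that $(x,1)$ generates a maximal proper principal right ideal, i.e. is an atom, which we verified above. The one genuine subtlety to flag is that the correspondence depends on the choice of transversal $X$: different transversals give isomorphic left Rees monoids but literally different (though equivalent) self-similar actions, so the theorem is a correspondence of isomorphism classes rather than a strict bijection, and I would state it that way. Since the excerpt says the full proof is in \cite{Lawson2008a} and only a sketch is wanted, I would present the two constructions and the verification that they compose to the identity on objects, citing \cite{Lawson2008a} for the complete axiom-chase.

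\begin{proof}
See \cite{Lawson2008a} for the details; we sketch the two constructions.

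Given a self-similar group action $(G,X)$, form the Zappa--Sz\'ep product $S = X^{\ast}\bowtie G$, whose underlying set is $X^{\ast}\times G$ with product $(x,g)(y,h) = (x(g\cdot y),\, g|_{y}h)$ and identity $(1,1)$. Axioms (SS1)--(SS8) make this an associative monoid in which $x \mapsto (x,1)$ embeds $X^{\ast}$ and $g\mapsto (1,g)$ embeds $G$, and every element is uniquely $(x,1)(1,g)$. Its group of units is $G$ and its atoms are the $(x,g)$ with $x \in X$; the associated bimodule has $h(x,g) = (h\cdot x, h|_{x}g)$ and $(x,g)h = (x,gh)$, which is right free because the action of $G$ on $X^{\ast}$ is length- and prefix-preserving. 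Hence $S$ is left cancellative, and it is right rigid since its principal right ideals are governed by the prefix order on $X^{\ast}$ transported through the action; each such ideal is properly contained in only finitely many others because a string has only finitely many prefixes; and $X \neq \emptyset$ gives a noninvertible element. By the characterization of left Rees monoids, $S$ is a left Rees monoid.

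Conversely, let $S$ be a left Rees monoid with group of units $G$. By part (5) of Lemma~\ref{le: bradbury} choose a transversal $X$ of the generators of the maximal proper principal right ideals, so $S = \langle X\rangle G$ and every atom is uniquely of the form $xg$ with $x \in X$ and $g \in G$. Since $\langle X\rangle$ is a Levi monoid with trivial group of units it is free on $X$, so $\langle X\rangle \cong X^{\ast}$. For $g \in G$ and $x \in X$, the element $gx$ is an atom, so there are unique $y \in X$ and $h \in G$ with $gx = yh$; set $g\cdot x = y$ and $g|_{x} = h$, extend to $X^{\ast}$ by $g\cdot 1 = 1$, $g|_{1} = g$, $g\cdot(xw) = (g\cdot x)(g|_{x}\cdot w)$ and $g|_{xw} = (g|_{x})|_{w}$. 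Uniqueness of factorizations of atoms, together with left cancellativity and equidivisibility, forces (SS1)--(SS8), so $(G,X)$ is a self-similar group action, and $xg \mapsto (x,g)$ is an isomorphism $S \to X^{\ast}\bowtie G$.

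The two constructions are mutually inverse up to isomorphism: starting from $X^{\ast}\bowtie G$, the submodule of atoms $\{(x,1):x\in X\}$ serves as a transversal and recovers the original action and restriction, while the preceding paragraph shows every left Rees monoid is recovered from the action it determines. (The action obtained from a left Rees monoid depends on the chosen transversal $X$, but different choices yield equivalent self-similar actions, so the correspondence is one of isomorphism classes.)
\end{proof}
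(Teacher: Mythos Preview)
Your proposal is correct and follows essentially the same approach as the paper's sketch: the Zappa--Sz\'ep product in one direction, and the choice of a transversal $X$ with the unique factorization $S = X^{\ast}G$ in the other. The only minor presentational difference is that the paper defines $g\cdot x$ and $g|_{x}$ directly for all $x \in X^{\ast}$ at once (using the unique factorization $gx = x'g'$ in $S$), whereas you define them first on letters and extend recursively; these agree, and your version makes (SS4) and (SS6) hold by construction rather than by verification.
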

\begin{proof}(Sketch)
Let $S$ be a left Rees monoid, with group of units $G$,
let $X$ be a transversal of the generators of the maximal proper principal right ideals,
and denote by $X^{\ast}$ the submonoid generated by the set $X$.
By part (5) of Lemma~\ref{le: bradbury}, we have that $S = X^{\ast}G$.
Then in fact $X^{\ast}$ is free,  and each element of $S$ can be written uniquely as a product of an element of $X^{\ast}$ and an element of $G$.
Let $g \in G$ and $x \in X^{\ast}$.
Then $gx \in S$ and so can be written uniquely in the form
$gx = x'g'$ where $x' \in X^{\ast}$ and $g' \in G$.
Define $x' = g \cdot x$ and $g' = g|_{x}$.
Then it is easy to check that this defines a self-similar action of $G$ on $X^{\ast}$.

Let $(G,X)$ be an arbitrary self-similar group action.
On the set $X^{\ast} \times G$ define a binary operation by
$$(x,g)(y,h) = (x(g \cdot y), g|_{y}h).$$ 
Then $X^{\ast} \times G$ is a left Rees monoid containing copies of $X^{\ast}$ and $G$
such that $X^{\ast} \times G$ can be written as a unique product of these copies.
This monoid is called the {\em Zappa-Sz\'ep product of $X^{\ast}$ and $G$} and is denoted $X^{\ast} \bowtie G$.

It follows that a monoid is a left Rees monoid 
if and only if it is isomorphic to a Zappa-Sz\'ep
product of a free monoid by a group. 
In turn, Zappa-Sz\'ep products of free monoids by groups determine,
and are determined by, self-similar group actions.
We have therefore set up a correspondence between left Rees monoids
and self-similar group actions in which
each determines the other up to isomorphism.
\end{proof}

\begin{remark}
{\em If we replace $G$ and $X^{\ast}$ by arbitrary monoids $S$ and $T$ respectively, then the axioms (SS1)--(SS8)
define the Zappa-Sz\'ep product of $S$ and $T$ denoted by $T \bowtie S$.
This monoid contains isomorphic copies of $S$ and $T$ and, by abusing notation a little,
we may write $T \bowtie S$ as simply $TS$.
Conversely, if a monoid $U$ contains submonoids $S$ and $T$ such that $U = TS$, uniquely,
then left and right actions are determined by deconstructing the associativity law and the properties of the identity.
It can then be proved that $U$ is isomorphic to  $T \bowtie S$.
If $S$ and $T$ are both groups then their Zappa-Sz\'ep product is also a group.}
\end{remark}

In a left Rees monoid $S$, the set $X$ is a transversal of the generators of the maximal proper principal right ideals.
Any such set will be called a {\em (left) basis} of the left Rees monoid $S$.
As long as the group of units of $S$ is not trivial, there will be many such bases.

\begin{remark} {\em If $S$ is a left Rees monoid with group of units $G$ then we write $S = X^{\ast}G$ 
to mean that a basis $X$ has been chosen and the data determining a self-similar action of $G$ on $X^{\ast}$.}
\end{remark}

If $S = X^{\ast}G$ is a left Rees monoid,
we may define the normalized length function $\lambda \colon S \rightarrow \mathbb{N}$ by
$\lambda (xg) = \left| x \right|$.

Let $S$ be a left Rees monoid with group of units $G$.
Define
$$K(S) = \{g \in G \colon \: gs \in sG \mbox{ for all } s \in S \},$$
a definition due to Rees \cite{Rees}.
This is a normal subgroup of $G(S)$
which we call the {\em kernel} of the left Rees monoid.
It can be checked that $K(S) = \bigcap_{x \in X^{\ast}}G_{x}$.

\begin{remark} 
{\em We have used the term `kernel' because this definition of Rees
given above agrees with the definition given in \cite{N1} on page 43.}
\end{remark}

\begin{remark} 
{\em Let $S$ be a left Rees monoid.
A subgroup $N \leq G$ is said to be a {\em right normal divisor}
if for all $s \in S$ we have that $Ns \subseteq sN$.
Such subgroups are clearly normal and subgroups of the kernel.
It can be shown, see Lemma~4.4 of \cite{Lawson2008a}, that $g \in N$ if and only if
for all $x \in X^{\ast}$ we have that $g \cdot x = x$ and $g|_{x} \in N$.
The kernel is therefore the largest right normal divisor.
These definitions go back to Rees's paper \cite{Rees}. 
We shall return to the significance of right normal divisors in the next section.}
\end{remark}

Left Rees monoids $S$ for which $K(S) = \{1 \}$ are said to be {\em fundamental}.\footnote{The inverse monoid associated with such a left Rees monoid is 
fundamental in the usual sense of inverse semigroup theory.}
We therefore have the following.

\begin{corollary}
A left Rees monoid is fundamental iff the corresponding group action is faithful.
\end{corollary}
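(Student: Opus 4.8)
The plan is to unwind the definitions and lean on the identity $K(S) = \bigcap_{x \in X^{\ast}} G_{x}$ recorded just above. Recall that $S$ is fundamental exactly when $K(S) = \{1\}$, and that the self-similar action of $G$ on $X^{\ast}$ is faithful exactly when the only $g \in G$ satisfying $g \cdot x = x$ for all $x \in X^{\ast}$ is the identity, i.e.\ exactly when $\bigcap_{x \in X^{\ast}} G_{x} = \{1\}$. So once $K(S)$ is identified with this intersection of stabilizers, the corollary drops out immediately.

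For completeness I would first check that identity. Fix a basis $X$, so that $S = X^{\ast}G$ with every element written uniquely as $xh$ (with $x \in X^{\ast}$ and $h \in G$), and recall that $gx = (g \cdot x)(g|_{x})$ for $g \in G$ and $x \in X^{\ast}$. If $g \in K(S)$ then in particular $gx \in xG$ for each $x \in X^{\ast}$; comparing this with the normal form $gx = (g \cdot x)(g|_{x})$ and invoking uniqueness of the decomposition forces $g \cdot x = x$, whence $g \in \bigcap_{x} G_{x}$. Conversely, if $g \cdot x = x$ for all $x \in X^{\ast}$ and $s = xh \in S$ is arbitrary, then $gs = (gx)h = (g \cdot x)(g|_{x})h = x\bigl((g|_{x})h\bigr) \in xG = sG$, so $g \in K(S)$. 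Hence $K(S) = \bigcap_{x \in X^{\ast}} G_{x}$.

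Stringing the equivalences together, $S$ is fundamental $\iff K(S) = \{1\} \iff \bigcap_{x \in X^{\ast}} G_{x} = \{1\} \iff$ the associated action is faithful. I expect no genuine obstacle here; the only point worth a remark is that ``the corresponding group action'' depends a priori on the choice of basis, whereas $K(S)$ is defined intrinsically, so the same computation also shows that faithfulness of the action does not depend on which basis is chosen.
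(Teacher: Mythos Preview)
Your proposal is correct and follows exactly the route the paper intends: the corollary is stated without proof because it is immediate from the definition of \emph{fundamental} together with the identity $K(S)=\bigcap_{x\in X^{\ast}}G_{x}$ recorded just before it. You have in addition supplied a clean verification of that identity (which the paper only asserts), and your closing remark on basis-independence is a nice bonus.
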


\begin{remark} {\em We now pick up our remark above on the relationship between our definition of self-similar
group actions and the one to be found in \cite{N1}.
In the theory of self-similar group actions,
the actions are usually assumed to be faithful;
see, for example, Definition~1.51 of \cite{N1}.
However, in places in \cite{N1}, it is not assumed to be faithful: such as in the remark following Definition~2.11.1.
This is only a mild inconsistency, but it is, in any event, rectified by our general definition of a self-similar group action.
In the case where the action of $G$ on $X^{\ast}$ is faithful, the axioms (SS1)--(SS8) are not independent:
axioms (SS5)--(SS8) can be derived from (SS1)--(SS4).
The axioms (SS1)--(SS4) then constitute the usual, classical, definition of a faithful self-similar group action.}
\end{remark}

\subsection{The arithmetic of left Rees monoids}\setcounter{theorem}{0}

The goal of this section is to explore the way in which  the structure of a left Rees monoid reflects the properties of its associated self-similar group action.
We begin with a straightforward portmanteau lemma.

\begin{lemma}\label{le: two} Let $(G,X)$ be a self-similar group action.
\begin{enumerate}

\item $(g|_{x})^{-1} = g^{-1}|_{g \cdot x}$ for all $x \in X^{\ast}$ and $g \in G$.

\item The function $\phi_{x} \colon \: G_{x} \rightarrow G$ given by $g \mapsto g|_{x}$ is a homomorphism.

\item Let $y = g \cdot x$. 
Then $G_{y} = gG_{x}g^{-1}$
and
$$\phi_{y}(h) = g|_{x}\phi_{x}(g^{-1}hg) (g|_{x})^{-1}.$$

\item If $\phi_{x}$ is injective then $\phi_{g \cdot x}$ is injective.

\item $\phi_{x}$ is injective for all $x \in X$ iff $\phi_{x}$ is injective for all $x \in X^{\ast}$.

\item The function from $G$ to $G$ defined by $g \mapsto g|_{x}$ is injective for all $x \in X$ iff it is injective for all $x \in X^{\ast}$.

\item The function from $G$ to $G$ defined by $g \mapsto g|_{x}$ is injective for all $x \in X$ iff for all $x \in X$, 
if $g|_{x} = 1$ then $g = 1$.

\item $\phi_{x}$ surjective for all $x \in X$ iff $\phi_{x}$ is surjective for all $x \in X^{\ast}$.

\item The function from $G$ to $G$ given by $g \mapsto g|_{x}$ is surjective for all $x \in X$ iff it is surjective for all $x \in X^{\ast}$.

\end{enumerate}
\end{lemma}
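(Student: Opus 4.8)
The plan is to run through the nine parts in order, repeatedly feeding earlier parts into later ones; every assertion is a short manipulation of the axioms (SS1)--(SS8), so the only decisions to make are which axiom to apply where. For part~(1), apply (SS8) to the product $gg^{-1}$ and use (SS7): from $1 = 1|_{x} = (gg^{-1})|_{x} = g|_{g^{-1}\cdot x}\,g^{-1}|_{x}$ we read off $g^{-1}|_{x} = (g|_{g^{-1}\cdot x})^{-1}$, and replacing $x$ by $g\cdot x$ and simplifying $g^{-1}\cdot(g\cdot x) = x$ via (SS2) and (SS1) gives the stated formula. Part~(2) is immediate: if $g,h\in G_{x}$ then $(gh)\cdot x = g\cdot(h\cdot x) = x$ by (SS2), and (SS8) with $h\cdot x = x$ gives $(gh)|_{x} = g|_{x}h|_{x}$.

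For part~(3) the stabilizer identity $G_{y} = gG_{x}g^{-1}$ follows directly from (SS1)--(SS2). For the conjugation formula, write $h = (hg)g^{-1}$ and apply (SS8) to get $h|_{g\cdot x} = (hg)|_{x}\,g^{-1}|_{g\cdot x}$ (using $g^{-1}\cdot(g\cdot x) = x$); then rewrite $hg = g(g^{-1}hg)$ and apply (SS8) again, noting that $g^{-1}hg\in G_{x}$ so $(g^{-1}hg)\cdot x = x$, to obtain $(hg)|_{x} = g|_{x}\,\phi_{x}(g^{-1}hg)$; finally part~(1) replaces $g^{-1}|_{g\cdot x}$ by $(g|_{x})^{-1}$. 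Part~(4) is then a one-line corollary: part~(3) exhibits $\phi_{g\cdot x}$ as the composite of the bijection $h\mapsto g^{-1}hg$ from $G_{g\cdot x}$ onto $G_{x}$, the map $\phi_{x}$, and conjugation by $g|_{x}$, so injectivity is preserved.

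The four ``$X$ versus $X^{\ast}$'' statements, parts~(5), (6), (8) and (9), I would dispatch uniformly by induction on word length, the base cases being $w = 1$ (where $\phi_{1} = \mathrm{id}_{G}$ by (SS5), and similarly for the unrestricted map) and $w\in X$ (the hypothesis). In the inductive step factor $w = xy$ with $x\in X$. For the unrestricted maps (parts~(6) and (9)) one has $g|_{xy} = (g|_{x})|_{y}$ by (SS6), so $g\mapsto g|_{xy}$ is a composite and inherits injectivity (resp.\ surjectivity) from its factors. For the restricted maps (parts~(5) and (8)) one has $\phi_{xy} = \phi_{y}\circ\phi_{x}$ on $G_{xy}$, but this requires first checking $G_{xy}\subseteq G_{x}$ and $\phi_{x}(G_{xy})\subseteq G_{y}$, both of which follow from (SS4) and the uniqueness of the factorization $h\cdot(xy) = (h\cdot x)(h|_{x}\cdot y)$ in the free monoid $X^{\ast}$. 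For the surjectivity claim in part~(8) there is one extra wrinkle: given $k\in G$, pull it back via $\phi_{y}$ to some $m\in G_{y}$, then via $\phi_{x}$ to some $h\in G_{x}$, and verify $h\in G_{xy}$ using (SS4) together with $h\cdot x = x$, $h|_{x} = m$ and $m\cdot y = y$.

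Part~(7) is the only one with any real content. One implication is trivial from (SS7), since $g|_{x} = 1 = 1|_{x}$. For the converse I would argue with $gh^{-1}$ rather than with $g$ and $h$ separately, and --- this is the one genuinely clever point --- evaluate it at the point $h\cdot x$ rather than at $x$: by (SS8), $(gh^{-1})|_{h\cdot x} = g|_{x}\,h^{-1}|_{h\cdot x}$, and part~(1) rewrites $h^{-1}|_{h\cdot x} = (h|_{x})^{-1}$, so the assumption $g|_{x} = h|_{x}$ forces $(gh^{-1})|_{h\cdot x} = 1$; since the action is length-preserving, $h\cdot x$ is again a letter, and the hypothesis applied at that letter yields $gh^{-1} = 1$, i.e.\ $g = h$. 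The main obstacle is nothing more than this alignment issue: in parts~(3) and~(7) the naive evaluation point does not make (SS8) collapse, and one must spot that evaluating at $g\cdot x$ (respectively $h\cdot x$) is what does. Everything else is routine.
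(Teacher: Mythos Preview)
Your proposal is correct and follows essentially the same route as the paper: direct axiom manipulations for (1)--(4) and (7), and induction on word length via (SS6) for the ``$X$ versus $X^{\ast}$'' parts (5), (6), (8), (9). The only cosmetic differences are that you supply an explicit proof of (1) where the paper cites an earlier work, and that in the inductions you factor $w = xy$ with the letter on the left whereas the paper factors $y = zx$ with the letter on the right --- both orderings work equally well. Your handling of (7), evaluating $(gh^{-1})|_{h\cdot x}$ and invoking (1), matches the paper's argument exactly.
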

\begin{proof}
(1) This was proved in \cite{Lawson2008a}.

(2) Let $g,h \in G_{x}$.
Then 
$$\phi_{x}(gh) = (gh)|_{x} = g|_{h \cdot x} h|_{x} = g|_{x} h|_{x} = \phi_{x}(g) \phi_{x}(h),$$
using (SS8), as required.

(3) We have that $h \cdot y = y$ iff $h \cdot (g \cdot x) = g \cdot x$ iff $g^{-1}hg \cdot x = x$ iff $g^{-1}hg \in G_{x}$.
Hence iff $h \in gG_{x}g^{-1}$.
The proof of the second claim follows by calculating $\phi_{x}(g^{-1}hg)$ using (SS8) and (1) above.

(4) This follows by (3) above.

(5) We obviously need only prove one direction.
We prove the result by induction on the length of $y$.
The result is true for strings of length one by assumption.
We assume the result true for strings of length $n$.
We now prove it for strings of length $n+1$.
Let $y$ be of length $n+1$.
Then $y = zx$ where $z$ has length $n$ and $x$ has length one.
We prove that $\phi_{y}$ is injective on $G_{y}$.
Let $h,k \in G_{y}$.
Then $h \cdot y = y = k \cdot y$.
By comparing lengths, it follows that 
$h \cdot z = z = k \cdot z$
and
$h|_{z} \cdot x = x = k|_{z} \cdot x$.
Suppose that $\phi_{y}(h) = \phi_{y}(k)$.
Then $h|_{y} = k|_{y}$.
By axiom (SS6), we have that
$(h|_{z})|_{x} = (k|_{z})|_{x}$.
But $h|_{z}, k|_{z} \in G_{x}$, and so
by injectivity for letters $h|_{z} = k|_{z}$.
Also $h,k \in G_{z}$, and so by the induction hypothesis $h = k$, as required.

(6) Again, only one direction needs to be proved and follows by induction using axiom (SS6).

(7) One direction is clear.
We prove the other direction.
Suppose that for all $x \in X$, if $g|_{x} = 1$ then $g = 1$.
We prove that the function from $G$ to $G$ defined by $g \mapsto g|_{x}$ is injective for all $x \in X$.
Suppose that $g|_{x} = h|_{x}$.
Then $g|_{x}(h|_{x})^{-1} = 1$.
By (1) above, $(h|_{x})^{-1} = h^{-1}|_{h \cdot x}$.
Put $y = h \cdot x$.
Then 
$$1 = g|_{x}(h|_{x})^{-1} = (g|_{h^{-1} \cdot y})( h^{-1}|_{y}) = (gh^{-1})|_{y}$$
by (SS8).
By assumption $gh^{-1} = 1$ and so $g = h$.

(8) Only one direction needs to be proved.
Let $y$ be a string of length $n+1$.
Then $y = zx$ where $x$ is a letter and $z$ has length $n$.
Let $g \in G$.
Then because $\phi_{x}$ is surjective, 
there exists $h \in G_{x}$ such that $\phi_{x}(h) = g$.
By the induction hypothesis, there exists $k \in G_{z}$ such that $\phi_{z}(k) = h$.
We now calculate 
$$k \cdot y = k \cdot (zx) = (k \cdot z)(k|_{z} \cdot x) = zx = y.$$
Thus $k \in G_{y}$ and $\phi_{y}(k) = k|_{zx} = (k|_{z})|_{x} = h|_{x} = g$,
as required using axiom (SS6).

(9) Only one direction needs to be proved and follows by induction and axiom (SS6).
\end{proof}

The structure of the principal right, left and two-sided ideals will play an important role in our calculations.

\begin{lemma}\label{le: greens} Let $S = X^{\ast}G$ be a left Rees monoid. 
\begin{enumerate}

\item $xh \,\mathscr{R}\, yk$ iff $x = y$. 

\item $xg\, \mathscr{L} \,yh$ iff there exists an invertible element $k$ such that
$k \cdot y = x$ and $k|_{y} = gh^{-1}$.

\item $xg \, \mathscr{H} \, yh$
iff
$x = y$
and there exists $k \in G_{x}$ such that $k|_{x} = gh^{-1}$.

\item  Let $x,y \in X^{\ast}$. 
Then $x \, \mathscr{J} \, y$ 
if and only if 
$x = g \cdot y$ for some $g \in G$.

\end{enumerate}
\end{lemma}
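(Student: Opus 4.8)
The plan is to treat all four parts uniformly: first invoke Lemma~\ref{le: Greens_relations} to replace each of Green's relations by an equality of $G$-translates (left coset, right coset, or double coset with the group of units $G$), and then exploit the unique normal form $S = X^{\ast}G$ together with the defining equation $gx = (g\cdot x)(g|_{x})$ of the self-similar action to read off the stated conditions. Throughout, $G\cdot x$ denotes the orbit $\{g\cdot x : g\in G\}$ of $x\in X^{\ast}$.

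For part (1), Lemma~\ref{le: Greens_relations}(3) gives $xh\,\mathscr{R}\,yk$ iff $xhG = ykG$, i.e. $xG = yG$. If $xG = yG$ then $x = yg$ for some $g\in G$; comparing with the normal form $x = x\cdot 1$ and using uniqueness of the decomposition $S = X^{\ast}G$ forces $x = y$ (and $g=1$), while the converse is trivial. For part (2), Lemma~\ref{le: Greens_relations}(2) gives $xg\,\mathscr{L}\,yh$ iff $xg = k(yh)$ for some invertible $k$ (the reverse equation $yh = k^{-1}(xg)$ is then automatic). Rewriting $k(yh) = (k\cdot y)(k|_{y})h = (k\cdot y)(k|_{y}h)$, which is in normal form, and comparing with $xg$, uniqueness yields $x = k\cdot y$ and $g = k|_{y}h$, that is $k|_{y} = gh^{-1}$; conversely such a $k$ exhibits the required $\mathscr{L}$-equivalence. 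Part (3) is then immediate since $\mathscr{H} = \mathscr{R}\cap\mathscr{L}$: part (1) forces $x = y$, and part (2) with $y$ replaced by $x$ supplies an invertible $k$ with $k\cdot x = x$, i.e. $k\in G_{x}$, and $k|_{x} = gh^{-1}$.

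For part (4), Lemma~\ref{le: Greens_relations}(4) gives $x\,\mathscr{J}\,y$ iff $GxG = GyG$. The key computation is $g_{1}xg_{2} = (g_{1}\cdot x)(g_{1}|_{x}g_{2})$, whence, letting $g_{1},g_{2}$ range over $G$, one gets $GxG = \{uh : u\in G\cdot x,\ h\in G\}$, each such $uh$ already being in normal form because the action is length-preserving. Thus $GxG = GyG$ forces, via uniqueness of the normal form, $G\cdot x = G\cdot y$, and in particular $x = g\cdot y$ for some $g\in G$; the converse holds since $x = g\cdot y$ makes the orbits of $x$ and $y$ coincide.

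None of the four parts presents a genuine obstacle once Lemma~\ref{le: Greens_relations} and the normal form $S = X^{\ast}G$ are in hand; the only point requiring care is the left--right asymmetry of a left Rees monoid, namely that the group of units acts on the left of $X^{\ast}$ through the pair (action, restriction) while on the right it is plain multiplication. The bookkeeping must therefore always push the group elements to the right-hand end, rewriting any expression $g\,w$ with $w\in X^{\ast}$ as $(g\cdot w)(g|_{w})$, so that the comparisons of normal forms are carried out on the correct side; the length-preservation of the action is what legitimizes treating the elements of $G\cdot x$ as genuine strings in these comparisons.
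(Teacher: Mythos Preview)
Your proof is correct and follows essentially the same approach as the paper: reduce each Green's relation to a $G$-coset condition via Lemma~\ref{le: Greens_relations}, then compare normal forms using $gx=(g\cdot x)(g|_{x})$ and the uniqueness of the decomposition $S=X^{\ast}G$. Your treatment of (4) is slightly more explicit than the paper's, computing the whole double coset $GxG=(G\cdot x)G$ before comparing, whereas the paper simply writes $x=gyh=(g\cdot y)g|_{y}h$ and reads off $x=g\cdot y$ directly; but this is a cosmetic difference, not a substantive one.
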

\begin{proof} (1) Straightfoward.

(2) Suppose that such a $k$ exists.
Then
$$k(yh) = (k \cdot y)k|_{y}h = xgh^{-1}h = xg.$$
The result now follows by Lemma~\ref{le: Greens_relations}.
Conversely, suppose that $xg \, \mathscr{L} \, yh$.
Then by Lemma~\ref{le: Greens_relations}, there exists an invertible element $k$ such that $k(yh) = xg$.
The result is now immediate.

(3) Immediate by (1) and (2).

(4)  Suppose that $x \, \mathscr{J} \, y$. 
Then there are group elements $g,h \in G$ such that $x = gyh$.
But $gyh = (g \cdot y)g|_{y}h$.
By uniqueness, we have that $x = g \cdot y$.
Conversely, suppose that $x = g \cdot y$.
Observe that $SgyS = S(g \cdot y)g|_{y}S = S(g \cdot y)S = SxS$.
Thus $SxS = SyS$ and so $x\, \mathscr{J} \, y$, as required.
\end{proof}

We shall now relate the behaviour of the action of $G$ on $X^{\ast}$ with the properties of the principal two-sided ideals.
We know that this action is length-preserving.
If the action of $G$ on $X^{n}$ is transitive for all $n$, then we say that the action is {\em level transitive}.

\begin{proposition}\label{prop: level_transitivity} Let $S = X^{\ast}G$ be a left Rees monoid.
\begin{enumerate}

\item There is a bijective correspondence between the orbits arising from the action of $G$ on $X$ and the maximal proper principal ideals.

\item The action of $G$ on $X$ is transitive if and only if $S$ has a maximum proper principal two-sided ideal if and only if $S$ is irreducible.

\item The action of $G$ on $X^{\ast}$ is level-transitive if and only if $S/\mathscr{J}$ is an infinite descending chain
order-isomorphic to the natural numbers with their reverse order.
This is also equivalent to $x\, \mathscr{J} \, y$ iff $|x| = |y|$. 

\end{enumerate}
\end{proposition}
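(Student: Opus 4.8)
The plan is to reduce each part to the structure theory already in hand, using the normal form $S = X^{\ast}G$, the length function $\lambda(xg) = |x|$, Lemma~\ref{le: molly} and Lemma~\ref{le: max_ideal} on maximal/maximum proper principal ideals, and the computation of $\mathscr{J}$ on $X^{\ast}$ in Lemma~\ref{le: greens}(4). I would use throughout that the atoms of $S$ are exactly the elements $xg$ with $x \in X$, $g \in G$, and that $SxgS = SxS$ since $g$ is a unit.

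For (1), by Lemma~\ref{le: molly} every maximal proper principal ideal is $SaS$ with $a$ an atom, hence of the form $SxS$ with $x \in X$. For $x, y \in X$ we have $SxS = SyS$ iff $x \, \mathscr{J} \, y$, which by Lemma~\ref{le: greens}(4) holds iff $x = g \cdot y$ for some $g \in G$; as the action is length-preserving this says precisely that $x$ and $y$ lie in one $G$-orbit on $X$. Thus $O \mapsto SxS$ (any $x \in O$) is a well-defined bijection from the orbits of $G$ on $X$ to the maximal proper principal ideals. For (2), "$S$ has a maximum proper principal ideal" is equivalent to "$S$ is irreducible" by the definition of irreducibility together with Lemma~\ref{le: max_ideal}. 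If $G$ acts transitively on $X$ then by (1) there is a single maximal proper principal ideal $SaS$; any noninvertible $s$ factors as a product of atoms by Lemma~\ref{le: bradbury}(4), so $SsS$ lies inside the ideal generated by its first atomic factor, which must be $SaS$, making $SaS$ the maximum. Conversely a maximum proper principal ideal is maximal and contains every maximal proper principal ideal, so there is only one such, and by (1) only one orbit.

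For (3), I would first record two general facts: $SaS \subseteq SbS$ gives $\lambda(a) \ge \lambda(b)$, and the inclusion is proper exactly when the inequality is strict, since $\lambda(a) = \lambda(b)$ in $a = ubv$ forces $u, v$ to be units and $SaS = SbS$; consequently $\lambda$ induces a strictly order-reversing, surjective map $S/\mathscr{J} \to \mathbb{N}$, the surjectivity because the powers of an atom realize every length. Moreover $xg \, \mathscr{J} \, x$, so every $\mathscr{J}$-class meets $X^{\ast}$, and by Lemma~\ref{le: greens}(4) two elements of $X^{n}$ are $\mathscr{J}$-related iff they lie in one $G$-orbit on $X^{n}$. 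Now if the action is level transitive, then for each $n$ the whole of $X^{n}$ is a single $\mathscr{J}$-class $J_{n}$, and these are all the $\mathscr{J}$-classes; comparability holds because a string of length $m \ge n$ has a prefix $x' \in X^{n}$, giving $SxS \subseteq Sx'S$. Hence $S/\mathscr{J}$ is the chain $J_{0} > J_{1} > J_{2} > \cdots$, infinite since $S$ has a noninvertible element, and order-isomorphic to $\mathbb{N}$ with the reverse order via $\lambda$; the equivalence with "$x \, \mathscr{J} \, y$ iff $|x| = |y|$" is then immediate.

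For the converse, assume $S/\mathscr{J}$ is order-isomorphic to $\mathbb{N}$ with reverse order and list it from its top element downward as $e_{0} > e_{1} > e_{2} > \cdots$, where $e_{0}$ is the class of the units, so $\lambda(e_{0}) = 0$. The strictly order-reversing surjection $\lambda$ then satisfies $\lambda(e_{0}) < \lambda(e_{1}) < \lambda(e_{2}) < \cdots$ and attains every natural number, which forces $\lambda(e_{k}) = k$; thus there is exactly one $\mathscr{J}$-class of each length, in particular $X^{n}$ is a single $\mathscr{J}$-class, and Lemma~\ref{le: greens}(4) gives transitivity of $G$ on $X^{n}$ for every $n$. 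The main obstacle is precisely this last implication — promoting the abstract chain condition to level transitivity — and everything hinges on the observation that proper containment of principal two-sided ideals strictly lowers the length, which is what makes $\lambda$ injective on $\mathscr{J}$-classes and hence, together with surjectivity onto $\mathbb{N}$, an order-isomorphism that pins down exactly one class per level.
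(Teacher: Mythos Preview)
Your proof is correct and follows essentially the same route as the paper's: parts (1) and (2) are argued identically via Lemma~\ref{le: molly}, Lemma~\ref{le: max_ideal}, and Lemma~\ref{le: greens}(4), and the forward direction of (3) is the same prefix/length argument. The only real difference is the converse of (3): the paper inducts level by level to show that the $n$th ideal in the chain is generated by any string of length $n$, whereas you observe globally that $\lambda$ descends to a strictly order-reversing surjection $S/\mathscr{J}\to\mathbb{N}$, which on a chain forces $\lambda(e_k)=k$; your version is a cleaner packaging of the same idea and avoids the inductive bookkeeping.
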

\begin{proof} (1) Let $s \in S$.
In Lemma~\ref{le: max_ideal},  
we saw that $SsS$ is a maximal proper principal ideal if and only if $s$ is an atom.
The maximal proper principal ideals are therefore of the form $SxS$ where $x \in X$.
We have that $SxS = SyS$ if and only if $x = g \cdot y$ for some $g \in G$.
The result now follows.

(2).  The first statement follows by (1) above.
The second statement follows by Lemma~\ref{le: max_ideal}.

(3) Suppose that the action is level-transitive.
Then $x\, \mathscr{J}\, y$ if and only if $|x| = |y|$. 
We prove that the principal two-sided ideals form an infinite desecending chain.
Let $I_{n} = SxS$ where $x \in X^{n}$ is a string of length $n$.
By our assumption, we have that $I_{n} = SyS$ where $y$ is any string of length $n$.
If now $|x| = n+1$ and $x = yz$ where $|y| = n$,
then $SxS \subseteq SyS$.
Hence $I_{n+1} \subseteq I_{n}$, and we have our descending chain.

Assume now that the principal two-sided ideals form an infinite descending chain
order-isomorphic to the natural numbers with their reverse order.
We shall prove that the action is level-transitive.
We denote the principal two-sided ideals by $I_{n} = Sx_{n}S$ for some $x_{n} \in X^{\ast}$
where $I_{n+1} \subseteq I_{n}$.
Since $x_{n+1} \in Sx_{n}S$ we know that $|x_{n+1}| \geq |x_{n}|$ by Lemma~\ref{le: Greens_relations}.
We also know that $S = I_{0}$.
By Lemma~\ref{le: molly}, we have that $x_{1}$, the generator of $I_{1}$ has length one.
Suppose now that for all $m \leq n$ we have proved that $Sx_{m}S$ is equal to $SxS$ where $x$ is any string of length $m$.
We now prove the same for $I_{n+1}$.
We prove first that $x_{n+1}$ has length $n+1$.
It cannot have length $n$ or less thus we can write it
as $x_{n+1} = xyz$ where $x$ has length $n$, and $y$ has length one.
Then $Sx_{n+1}S \subseteq SxyS$.
If they are not equal, then $SxyS$ has got to equal one of the earlier ideals in the chain.
But that would mean $xy$ would have length at most $n$ which is a contradiction.
Thus $Sx_{n+1}S = SxyS$.
It follows that $x_{n+1}$ has length $n+1$.
Now let $SxS$ be any ideal where $x$ has length $n+1$.
It cannot be equal to any of the earlier ideals and so it is equal either to $I_{n+1}$ or to a later ideal in the chain.
In any event, $SxS \subseteq Sx_{n+1}S$.
But $x$ and $x_{n+1}$ have the same length and so we have that $x = g \cdot x_{n+1}$.
We conclude that $I_{n}$ is generated by any element of $X^{n}$ and that $G$ acts transitively on $X^{n}$ for any $n$.
\end{proof}

We now turn to the $\mathscr{R}$-relation.
Observe that if $x,y \in X^{\ast}$ then $xS \subseteq yS$ iff $x = yz$ for some $z \in X^{\ast}$.
Combined with Lemma~\ref{le:  greens}, this tells us that the partially ordered set $S/\mathscr{R}$ of
$\mathscr{R}$-classes is order-isomorphic to the set $X^{\ast}$ equipped with the prefix ordering.
We shall now consider the relationship between the $\mathscr{R}$-relation on left Rees monoids and certain kinds of congruences.

\begin{proposition} Let $\theta \colon S \rightarrow T$ be a monoid homomorphism between left Rees monoids
where $S$ has group of units $G$ and $T$ has group of units $H$.
Let  $S = X^{\ast}G$ for some $X$.
Suppose in addition that the kernel of $\theta$ is contained in $\mathscr{R}$.
Define $N = \{g \in G \colon \theta (g) = 1\}$.
Then $N$ is a right normal divisor.
In addition, if $\theta (s_{1}) = \theta (s_{2})$ then there exists $n \in N$
such that $s_{1} = s_{2}n$.
If $\theta$ is surjective then $\theta$ is atom preserving.
\end{proposition}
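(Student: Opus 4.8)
The plan is to work throughout with the normal form $S = X^{\ast}G$ and to rely on the facts that $S$ and $T$, being left Rees monoids, are left cancellative and carry normalized length functions, together with the hypothesis that the kernel of $\theta$ is contained in $\mathscr{R}$. First I would record the trivial preliminary that $\theta$ sends invertible elements to invertible elements, so $\theta(G) \subseteq H$ and $N$ is precisely the kernel of the group homomorphism $\theta|_{G} \colon G \rightarrow H$; in particular $N$ is a normal subgroup of $G$.

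The technical heart of the statement is the claim that for every $n \in N$ and every $x \in X^{\ast}$ one has $n \cdot x = x$ and $n|_{x} \in N$. To see $n \cdot x = x$: since $\theta(nx) = \theta(n)\theta(x) = \theta(x)$, the pair $(nx,x)$ lies in the kernel of $\theta$, hence in $\mathscr{R}$, so $nx \, \mathscr{R} \, x$; writing $nx = (n \cdot x)(n|_{x})$ in normal form and applying part (1) of Lemma~\ref{le: greens} forces $n \cdot x = x$. Consequently $nx = x(n|_{x})$, so $\theta(x)\theta(n|_{x}) = \theta(nx) = \theta(x) = \theta(x) \cdot 1$, and since $T$ is left cancellative we get $\theta(n|_{x}) = 1$, that is $n|_{x} \in N$. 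Granting this claim the right-normal-divisor assertion is immediate: for $s = xg$ with $x \in X^{\ast}$, $g \in G$ and for $n \in N$ we compute $ns = (n \cdot x)(n|_{x})g = x(n|_{x})g = (xg)(g^{-1}(n|_{x})g) = sn'$, where $n' = g^{-1}(n|_{x})g \in N$ because $N \trianglelefteq G$ and $n|_{x} \in N$; thus $Ns \subseteq sN$, so $N$ is a right normal divisor.

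Next I would prove that $\theta(s_{1}) = \theta(s_{2})$ implies $s_{1} = s_{2}n$ for some $n \in N$. By hypothesis $s_{1} \, \mathscr{R} \, s_{2}$, so by part (3) of Lemma~\ref{le: Greens_relations} we have $s_{1}G = s_{2}G$, whence $s_{1} = s_{2}g$ for some $g \in G$. Writing $s_{2} = xg_{2}$ with $x \in X^{\ast}$ and $g_{2} \in G$, we get $\theta(x)\theta(g_{2}) = \theta(s_{2}) = \theta(s_{1}) = \theta(s_{2})\theta(g) = \theta(x)\theta(g_{2})\theta(g)$, and left cancellativity of $T$ reduces this to $\theta(g_{2}) = \theta(g_{2})\theta(g)$; since $\theta(g_{2})$ is a unit of $H$ we conclude $\theta(g) = 1$, so $g \in N$ and we may take $n = g$. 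Finally, suppose $\theta$ is surjective and let $a$ be an atom of $S$. If $\theta(a)$ were invertible, surjectivity would give $w \in S$ with $\theta(w) = \theta(a)^{-1}$, so $\theta(aw) = 1 = \theta(1)$ and the previous paragraph gives $aw = n \in N \subseteq G$; but then $\lambda(a) = 0$, contradicting that $a$ is an atom, so $\theta(a)$ is noninvertible. If $\theta(a) = pq$ in $T$, write $p = \theta(u)$ and $q = \theta(v)$ by surjectivity, so $\theta(a) = \theta(uv)$ and hence $a = uvn$ for some $n \in N$ by the previous paragraph; as $a$ is an atom, either $u$ is invertible, whence $p = \theta(u) \in H$, or $vn$ is invertible, in which case $v = (vn)n^{-1} \in G$ and $q = \theta(v) \in H$. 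Therefore $\theta(a)$ is an atom.

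The only step that genuinely requires care is the key claim of the second paragraph, and within it the discipline of using left cancellativity of $T$ — available precisely because $T$ is itself a left Rees monoid — rather than attempting to cancel arbitrary elements of $T$; the conjugation $n' = g^{-1}(n|_{x})g$ is the one other place needing attention, since it is what makes the one-sided inclusion $Ns \subseteq sN$ come out correctly.
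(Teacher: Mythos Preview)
Your proof is correct and follows essentially the same route as the paper's: establish $n\cdot x = x$ and $n|_{x}\in N$ via the $\mathscr{R}$-constraint and left cancellation in $T$, then conjugate to obtain $Ns\subseteq sN$; deduce the ``$s_{1}=s_{2}n$'' statement from $s_{1}\,\mathscr{R}\,s_{2}$ and cancellation; and handle atom-preservation by pulling a factorization in $T$ back through surjectivity and using the $\mathscr{R}$-constraint again. The one place you are actually more careful than the paper is in explicitly verifying that $\theta(a)$ is noninvertible (via $\theta(aw)=1\Rightarrow aw\in N$), a step the paper's proof leaves implicit.
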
 
\begin{proof} Let $s \in S$ be an arbitrary element.
Then $s = xg$.
Let $h \in N$.
Then $\theta (hs) = \theta (s)$.
By assumption, $hs \, \mathscr{R} \, s$.
But $hs = hxg = (h \cdot x)h|_{x}g$ and so by Lemma~\ref{le: greens},
we have that $x = h \cdot x$.
From $\theta (hs) = \theta (s)$ we get that
$\theta (x h|_{x}g) = \theta (xg)$.
Thus by left cancellation, we get that 
$\theta (h|_{x}g) = \theta (g)$.
The element $\theta (g)$ is invertible and so $\theta (h|_{x}) = 1$.
It follows that $h|_{x} \in N$.
Thus $hs = x h|_{x}g = (xg)g^{-1}h|_{x}g = sg^{-1}h|_{x}g$.
But $N$ is certainly a normal subgroup of $G$ and so $N$ is a right normal divisor.

Let $\theta (s_{1}) = \theta (s_{2})$ where $s_{1} = x_{1}g_{1}$ and let $s_{2} = x_{2}g_{2}$.
By assumption, $s_{1} \, \mathscr{R} \, s_{2}$ and so $x_{1} = x_{2} = x$, say.
By left cancellation, we have that $\theta (g_{1}) = \theta (g_{2})$.
It follows that $g_{2} = g_{1}n$ for some $n \in N$.

Suppose that $\theta$ is surjective.
Let $a \in S$ be an atom and suppose that $\theta (a) = \theta (b)\theta (c)$.
Then $a \, \mathscr{R} \, bc$.
In particular, $a$ and $bc$ have the same length.
It follows that since $a$ is an atom so too is $bc$.
This means that $b$ is an atom and $c$ is invertible or vice-versa.
In the first case, $\theta (c)$ is invertible and in the second case $\theta (b)$ is invertible.
Thus $\theta (a)$ is an atom.
\end{proof}

\begin{remark} {\em It follows from the classical work by Rees \cite{Rees},
that right normal divisors determine and are determined by those monoid congruences on
left Rees monoids that are contained in the $\mathscr{R}$-relation.
In fact, if $S = X^{\ast}G$ and $N \subseteq G$ is a right normal divisor
then the quotient of $S$ determined by $N$ is isomorphic to $X^{\ast}G/N$.
It follows that the congruence determined by the kernel of $S$ is the
maximum monoid congruence contained in the $\mathscr{R}$-relation.
Thus Proposition~2.7.4 of \cite{N1} is a version of Rees's theory of right normal divisors.}
\end{remark}

The relation $\mathscr{R}$ is always a {\em left} congruence.

\begin{proposition} The relation $\mathscr{R}$ is a right congruence, and so a congruence,
if and only if the action of $G$ on $X^{\ast}$ is trivial.
\end{proposition}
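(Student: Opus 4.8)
The plan is to prove both implications using the description of the $\mathscr{R}$-order on a left Rees monoid. The key facts I would invoke are Lemma~\ref{le: greens}(1), that $xg \,\mathscr{R}\, yh$ holds exactly when $x = y$, and the defining identity $gx = (g\cdot x)(g|_{x})$ for $g \in G$ and $x \in X^{\ast}$ (so that the $X^{\ast}$-component of the product $gx$ is $g \cdot x$); for the ``so a congruence'' clause I would also use the remark preceding the statement, that $\mathscr{R}$ is always a left congruence.

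For the direction ``trivial action $\Rightarrow$ $\mathscr{R}$ a right congruence'': given $xg \,\mathscr{R}\, xh$ and an arbitrary element $yk \in S$, I would compute the $X^{\ast}$-components of $(xg)(yk)$ and $(xh)(yk)$, namely $x(g\cdot y)$ and $x(h\cdot y)$; triviality of the action makes both equal to $xy$, so Lemma~\ref{le: greens}(1) gives $(xg)(yk) \,\mathscr{R}\, (xh)(yk)$. Together with the fact that $\mathscr{R}$ is a left congruence, this shows $\mathscr{R}$ is a congruence.

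For the converse, assume $\mathscr{R}$ is a right congruence. First I would reduce to showing that $g\cdot x = x$ for every $g \in G$ and every atom $x \in X$, since axiom (SS4) together with an induction on length propagates triviality from $X$ to all of $X^{\ast}$. Then, for fixed $g$ and $x$, I would exploit the fact that $1 \,\mathscr{R}\, g$ (both are units, so $1S = S = gS$): right-multiplying by the atom $x$ and invoking the right-congruence hypothesis yields $x = 1x \,\mathscr{R}\, gx = (g\cdot x)(g|_{x})$, whence Lemma~\ref{le: greens}(1) forces $g \cdot x = x$, as required.

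All the computations involved are short, so there is no serious obstacle. The only point requiring any ingenuity is in the converse: one must notice that feeding the pair of units $1 \,\mathscr{R}\, g$ into the right-congruence property and right-multiplying by a single atom is exactly what brings the value $g \cdot x$ to the surface as an $X^{\ast}$-component. Once that test element has been selected, everything else follows immediately from Lemma~\ref{le: greens}.
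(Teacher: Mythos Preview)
Your proposal is correct and follows essentially the same approach as the paper: both directions hinge on the identity $gx = (g\cdot x)g|_{x}$ together with the characterization of $\mathscr{R}$ in Lemma~\ref{le: greens}(1), and for the converse both feed the pair $1\,\mathscr{R}\,g$ into the right-congruence hypothesis. The only difference is that the paper applies this test with an arbitrary $x \in X^{\ast}$ directly, so your reduction to atoms and the subsequent induction via (SS4) are unnecessary (though not incorrect).
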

\begin{proof}
Suppose that $\mathscr{R}$ is a right congruence.
Let $g \in G$.
Then $gS = S = 1S$ and so $g \, \mathscr{R} \,1$.
Let $x \in X^{\ast}$ be arbitrary.
By assumption, $gx \, \mathscr{R} \, x$.
Thus $gxS = xS$.
There is therefore a unit $h$ such that $gx = xh$.
But $gx = (g \cdot x)g|_{x}$.
By uniqueness, $x = g \cdot x$, and so the action is trivial.

Conversely, suppose that the action is trivial.
Let $a \, \mathscr{R} \, b$. 
We prove that  $ac \, \mathscr{R} \, bc$.
Let $a = bg$ and $c = yh$.
Then
$$ac = bgc = bgyh = b(g \cdot y)g|_{y}h = byg|_{y}h = byh(h^{-1}g|_{y}h) = bc(h^{-1}g|_{y}h)$$
and so $ac \, \mathscr{R} \, bc$, as required.
\end{proof}

\begin{remark}{\em
Suppose that the action of $G$ on $X^{\ast}$ is trivial.
Define $G \times X^{\ast} \rightarrow G$
by $(x,g) \mapsto g|_{x} = g \circ x$.
Then
$g \circ 1 = g|_{1} = g$ by (SS5)
and
$g \circ (xy) = g|_{xy} = (g|_{x})|_{y} = (g \circ x) \circ y$ by (SS6)
and
$(gh) \circ x = (gh)|_{x} = g|_{h \cdot x}h|_{x} = (g \circ x)(h \circ x)$
by (SS8) and the fact that the action is trivial.
Thus the monoid $X^{\ast}$ acts on the group $G$ on the right by endomorphisms.
In this case, the product assumes the following form
$$(xg)(yh) = x(g \cdot y)g|_{y}h = xyg|_{y}h = xy(g \circ y)h.$$  
It follows that $S$ can be described as a semidirect product of two monoids.
We regard semidirect products of monoids as `degenerate' Zappa-Sz\'ep products.
Observe also that in this case $S/\mathscr{R}$ is a free monoid isomorphic to $X^{\ast}$.}
\end{remark}

We now touch on some properties of the $\mathscr{H}$-classes.

\begin{lemma}\label{le: H-structure} Let $S = X^{\ast}G$. 
Then $\left| H_{xg} \right| = \left| \mbox{\rm im} (\phi_{x}) \right|$.
\end{lemma}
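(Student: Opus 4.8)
The plan is to read off the $\mathscr{H}$-class of $xg$ directly from the description of $\mathscr{H}$ on a left Rees monoid and then count it using the uniqueness of the Zappa-Sz\'ep decomposition $S = X^{\ast}G$. By part (3) of Lemma~\ref{le: greens}, an element $yh$ of $S$ satisfies $yh \, \mathscr{H} \, xg$ if and only if $y = x$ and there exists $k \in G_{x}$ with $k|_{x} = gh^{-1}$. So the first step is to rewrite this condition: $yh \, \mathscr{H} \, xg$ iff $y = x$ and $gh^{-1} \in \mbox{\rm im}(\phi_{x})$, since $k|_{x} = \phi_{x}(k)$ ranges over all of $\mbox{\rm im}(\phi_{x})$ as $k$ ranges over $G_{x}$.

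Next I would use that $\mbox{\rm im}(\phi_{x})$ is a subgroup of $G$: this is immediate from part (2) of Lemma~\ref{le: two}, which says $\phi_{x} \colon G_{x} \rightarrow G$ is a homomorphism. Consequently the condition $gh^{-1} \in \mbox{\rm im}(\phi_{x})$ is equivalent to $h \in \mbox{\rm im}(\phi_{x})\,g$ (taking inverses and using that the image is closed under inversion). Hence
$$H_{xg} = \{ xh \colon h \in \mbox{\rm im}(\phi_{x})\,g \}.$$

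The final step is the counting. Since $S = X^{\ast}G$ with each element expressible uniquely as a product of an element of $X^{\ast}$ and an element of $G$, the map $G \rightarrow S$ given by $h \mapsto xh$ is injective. Therefore $\left| H_{xg} \right| = \left| \mbox{\rm im}(\phi_{x})\,g \right|$, and since right translation by $g$ is a bijection of $G$, this equals $\left| \mbox{\rm im}(\phi_{x}) \right|$, as required.

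I do not anticipate a serious obstacle here; the argument is essentially a repackaging of Lemma~\ref{le: greens}(3) together with the subgroup property of $\mbox{\rm im}(\phi_{x})$. The only point requiring a little care is the manipulation of inverses when passing from ``$gh^{-1} \in \mbox{\rm im}(\phi_{x})$'' to a coset description of the admissible $h$'s, and making explicit that uniqueness of the decomposition in the Zappa-Sz\'ep product is what turns the set-theoretic bijection $h \mapsto xh$ into an injection into $S$.
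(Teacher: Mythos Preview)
Your argument is correct and follows essentially the same route as the paper's proof: both invoke part~(3) of Lemma~\ref{le: greens} to identify $H_{xg}$ with $\{xh : h \in \mbox{\rm im}(\phi_{x})g\}$ and then read off the cardinality. You simply make explicit two points the paper leaves tacit, namely that $\mbox{\rm im}(\phi_{x})$ is a subgroup (so the passage from $gh^{-1} \in \mbox{\rm im}(\phi_{x})$ to $h \in \mbox{\rm im}(\phi_{x})g$ is legitimate) and that uniqueness of the Zappa--Sz\'ep decomposition makes $h \mapsto xh$ injective.
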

\begin{proof} By Lemma~\ref{le:  greens},
we have that $yh \, \mathscr{H} \, xg$ if and only if $y = x$ and there exists $k \in G_{x}$ such that
$k|_{x} = gh^{-1}$. 
It follows that it consists of all elements $xh$ where $h \in \mbox{\rm im}(\phi_{x})g$.
The result now follows.
\end{proof}

We now extend some notation introduced earlier.
Let $S$ be a monoid with group of units $G$.
For any $s \in S$ define the following two subgroups of $G$
$$G_{s}^{+} = \{g \in G \colon gs \in sG\} \mbox{ and } G_{s}^{-} = \{g \in G \colon sg \in Gs \}.$$

\begin{lemma} Let $S = X^{\ast}G$ be a left Rees monoid.
If $s = xh$ then $G_{s}^{+} = G_{x}$.
\end{lemma}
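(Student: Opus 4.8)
The plan is to unwind both defining conditions against the unique factorization $S = X^{\ast}G$ supplied by Theorem~\ref{the: correspondence}. First I would note that, since $h$ lies in the group of units, $sG = (xh)G = xG$, while $gs = g(xh) = (gx)h$; hence the condition $gs \in sG$ that defines membership in $G_{s}^{+}$ is equivalent to $gxh \in xG$, and, multiplying on the right by $h^{-1}$, to the cleaner condition $gx \in xG$. In particular this already shows that $G_{s}^{+}$ depends only on $x$ and not on the chosen unit $h$, which is a useful sanity check on the statement.

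The second step is to recognise $gx \in xG$ as a statement about the self-similar action. Recall from the construction in the proof of Theorem~\ref{the: correspondence} that $gx = (g \cdot x)(g|_{x})$ is precisely the unique way of writing the element $gx \in S$ as a product of an element of $X^{\ast}$ with an element of $G$. Every element of $xG$ has such a factorization with $X^{\ast}$-component equal to $x$, so by uniqueness of the normal form $gx \in xG$ holds if and only if $g \cdot x = x$, i.e. if and only if $g \in G_{x}$. Combining the two steps gives
$$g \in G_{s}^{+} \Leftrightarrow gx \in xG \Leftrightarrow g \cdot x = x \Leftrightarrow g \in G_{x},$$
which is the asserted equality. (Alternatively, the first step can be phrased via Green's relations: $gs \in sG$ if and only if $gs \,\mathscr{R}\, s$, after which one invokes part (1) of Lemma~\ref{le: greens}; but the direct computation above is the shortest route.)

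I do not anticipate any genuine obstacle: the whole argument rests on the uniqueness of the $X^{\ast}G$-factorization and on the fact that $g \cdot x$ is, by definition, the $X^{\ast}$-part of $gx$. The only point requiring care is the bookkeeping with the unit $h$ — keeping it on the right throughout so that it cancels cleanly when one passes between $s = xh$ and $x$.
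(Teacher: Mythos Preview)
Your argument is correct and follows essentially the same route as the paper: both proofs reduce the condition $gs \in sG$ to the equality $g \cdot x = x$ via the unique factorization $gx = (g\cdot x)\,g|_{x}$ in $X^{\ast}G$. The only cosmetic difference is that you first cancel the unit $h$ by observing $sG = xG$, whereas the paper carries $h$ through the computation and identifies the explicit witness $gs = s\,(h^{-1}g|_{x}h)$ for the converse inclusion.
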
 
\begin{proof} Let $g \in G_{s}^{+}$.
Then $gs = sg'$ for some $g'$.
Hence $g(xh) = xhg'$.
Thus $(g \cdot x) g|_{x}h = xhg'$.
It follows that $g \cdot x = x$ and so $g \in G_{x}$.
Conversely, suppose that $g \in G_{x}$.
Then $g(xh) = (g \cdot x)g|_{x}h = x g|_{x}h$.
Hence $gs = sh^{-1}g|_{x}h$ and so $g \in G_{x}^{+}$.
\end{proof}

Left Rees monoids contain exactly one idempotent, the identity, and so contain only one group $\mathscr{H}$-class,
the group of units.
There are, however, groups associated with any $\mathscr{H}$-class whose definition we now recall.
Let $S$ be a monoid.
Let $a \in S$.
We are interested in all the elements $s$ such that $H_{a}s \subseteq H_{a}$.
Each such element $s$ determines a permutation of $H_{a}$ and the  {\em Sch\"utzenberger group} associated with the 
$\mathscr{H}$-class $H_{a}$ is then the group generated by these permuations and Sch\"utzenberger groups of $\mathscr{H}$-classes
belonging to the same $\mathscr{D}$-class (see below) are isomorphic \cite{Lallement1}.

\begin{lemma}\label{le: Schutz} The Sch\"utzenberger group associated with the $\mathscr{H}$-class $H_{xg}$ is 
$$g^{-1}\phi_{x}(G_{x})g.$$
It is also equal to $G_{xg}^{-}$.
\end{lemma}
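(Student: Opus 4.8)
The plan is to read both assertions off a direct computation, using the normal form $S = X^{\ast}G$ in which every element is uniquely $xh$ with $x \in X^{\ast}$, $h \in G$. First I would recall, combining Lemma~\ref{le: greens}(3) with Lemma~\ref{le: H-structure}, that $H_{xg} = \{\,xh \colon h \in \phi_{x}(G_{x})g\,\}$, so in particular every element of $H_{xg}$ has length $|x|$. The next step is to identify the stabilizer $\Sigma = \{\,s \in S \colon H_{xg}s \subseteq H_{xg}\,\}$. If $s$ has positive length then $(xh)s$ has length strictly greater than $|x|$, so it cannot lie in $H_{xg}$; hence $\Sigma \subseteq G$. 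For a unit $k$ we have $(xh)k = x(hk)$, and this lies in $H_{xg}$ for every $h \in \phi_{x}(G_{x})g$ precisely when $\phi_{x}(G_{x})gk = \phi_{x}(G_{x})g$, i.e. when $gkg^{-1} \in \phi_{x}(G_{x})$, i.e. when $k \in g^{-1}\phi_{x}(G_{x})g$. Thus $\Sigma = g^{-1}\phi_{x}(G_{x})g$.

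Then I would pass from the stabilizer to the Sch\"utzenberger group itself. The permutation of $H_{xg}$ induced by $k \in \Sigma$ is $\rho_{k} \colon xh \mapsto xhk$; one has $\rho_{k_{1}}\rho_{k_{2}} = \rho_{k_{1}k_{2}}$, and $k \mapsto \rho_{k}$ is injective since $xhk = xh$ forces $hk = h$ by left cancellation in $S$. Because this map is a homomorphism from the group $g^{-1}\phi_{x}(G_{x})g$, its image is already a subgroup of the symmetric group on $H_{xg}$, so the group generated by the permutations $\rho_{k}$ coincides with that image; under the resulting isomorphism the Sch\"utzenberger group of $H_{xg}$ is identified with $g^{-1}\phi_{x}(G_{x})g$, as claimed.

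For the second assertion I would compute $G_{xg}^{-}$ directly from its definition. An element $k \in G$ lies in $G_{xg}^{-}$ iff $(xg)k = l(xg)$ for some $l \in G$. Writing both sides in normal form gives $x(gk) = (l \cdot x)(l|_{x}g)$, and uniqueness of the normal form forces $l \cdot x = x$, that is $l \in G_{x}$, together with $gk = l|_{x}g = \phi_{x}(l)g$, that is $k = g^{-1}\phi_{x}(l)g$. Conversely, every $l \in G_{x}$ yields such a $k$. Hence $G_{xg}^{-} = g^{-1}\phi_{x}(G_{x})g$, which is exactly the subgroup obtained above.

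I expect no genuine difficulty here: the only thing that needs care is the bookkeeping around the definition of the Sch\"utzenberger group, namely checking that the stabilizer $\Sigma$ consists only of units (the length argument) and that right multiplication acts faithfully on $H_{xg}$ (left cancellation). Once these two points are in place, both identifications reduce to short coset manipulations in $G$, and the equality of the two descriptions of the group is immediate.
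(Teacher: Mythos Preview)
Your proposal is correct and follows essentially the same route as the paper: compute the right stabilizer of $H_{xg}$, observe it must consist of units, identify it with $g^{-1}\phi_{x}(G_{x})g$ via the coset condition, and then compute $G_{xg}^{-}$ by writing $(xg)k = l(xg)$ in normal form. You are slightly more explicit than the paper in two places---using the length function to force stabilizing elements to be units (the paper simply asserts invertibility), and checking faithfulness of the right action so that the stabilizer genuinely \emph{is} the Sch\"utzenberger group rather than merely surjecting onto it---but these are elaborations of points the paper passes over, not a different argument.
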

\begin{proof}
We first calculate the right stabilizer of $H_{xg}$.
Observe that if $H_{xg}a = H_{xg}$ then $a$ has to be invertible.
If $l \in G$ and $H_{xg}l \subseteq H_{xg}$ then it is easy to check that $l \in g^{-1}\phi_{x}(G_{x})g$,
and that if  $l \in g^{-1}\phi_{x}(G_{x})g$ then $H_{xg}l \subseteq H_{xg}$.
Equality rather than inclusion follows from the fact that $g^{-1}\phi_{x}(G_{x})g$ is a subgroup of $G$.
It is now immediate that $g^{-1}\phi_{x}(G_{x})g$ {\em is} the Sch\"utzenberger group associated with the $\mathscr{H}$-class $H_{xg}$.

We now prove the second claim.
Let $h \in G$ be such that $(xg)h \in Gxg$.
For such an $h$ there exists $k \in G$ such that
$xgh = kxg$.
Thus $x = k \cdot x$ and $h = g^{-1}(k|_{x})g$.
Hence $h \in g^{-1}\phi_{x}(G_{x})g$.
Conversely, suppose that $h \in g^{-1}\phi_{x}(G_{x})g$.
Then $h = g^{-1}(k|_{x})g$ for some  $k \in G_{x}$.
Thus $(xg)h = k(xg)$.
\end{proof}

We now consider the properties of the maps $\phi_{x}$.

\begin{proposition}\label{prop: cancellative} Let $S = X^{\ast}G$ be a left Rees monoid.
Then the following are equivalent.
\begin{enumerate}

\item The functions $\phi_{x} \colon \: G_{x} \rightarrow G$ are injective for all $x \in X^{\ast}$.

\item The monoid $S$ is right cancellative (and so cancellative).

\end{enumerate}
\end{proposition}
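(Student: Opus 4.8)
The plan is to prove the two implications directly from the Zappa--Sz\'ep normal form $s = xg$ with $x \in X^{\ast}$, $g \in G$, together with the product rule $(xg)(yh) = x(g \cdot y)(g|_{y})h$, the uniqueness of this normal form, and the freeness of $X^{\ast}$. I would also note at the outset that since a left Rees monoid is left cancellative by definition, proving right cancellativity in (2) automatically gives cancellativity, which disposes of the parenthetical remark.

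For (2) $\Rightarrow$ (1): by part (5) of Lemma~\ref{le: two} it suffices to show that $\phi_{x}$ is injective for every letter $x \in X$. Suppose $k \in G_{x}$ with $\phi_{x}(k) = k|_{x} = 1$. Then $kx = (k \cdot x)(k|_{x}) = x$, since $k \cdot x = x$ and $k|_{x} = 1$; that is, $kx = 1 \cdot x$ in $S$. As $S$ is right cancellative, $x$ is right-cancellable, so $k = 1$, and $\phi_{x}$ is injective.

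For (1) $\Rightarrow$ (2): I would first reduce to cancelling a single letter. Given $ac = bc$, write $c = yg$ with $y \in X^{\ast}$, $g \in G$; multiplying on the right by $g^{-1}$ replaces $c$ by $y$, and writing $y = x_{1} \cdots x_{n}$ as a product of letters, it is enough to prove that $sx = tx$ implies $s = t$ for $s,t \in S$ and $x \in X$, for then the letters $x_{n}, x_{n-1}, \ldots, x_{1}$ can be cancelled one at a time (and if $y$ is empty then $a = b$ at once). Now write $s = zg$ and $t = wh$; then $sx = z(g \cdot x)(g|_{x})$ and $tx = w(h \cdot x)(h|_{x})$, with $g \cdot x, h \cdot x \in X$. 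Uniqueness of the normal form gives $z(g \cdot x) = w(h \cdot x)$ and $g|_{x} = h|_{x}$, and since $X^{\ast}$ is free and $g \cdot x, h \cdot x$ are single letters, comparing last letters and the remaining prefixes forces $z = w$ and $g \cdot x = h \cdot x$. Put $k = h^{-1}g$; then $k \cdot x = x$, so $k \in G_{x}$, and using axiom (SS8) and part (1) of Lemma~\ref{le: two},
$$\phi_{x}(k) = (h^{-1}g)|_{x} = h^{-1}|_{g \cdot x}\, g|_{x} = h^{-1}|_{h \cdot x}\, h|_{x} = (h|_{x})^{-1} h|_{x} = 1.$$
By hypothesis (1), $\phi_{x}$ is injective, so $k = 1$, i.e.\ $g = h$, whence $s = zg = wh = t$.

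The normal-form bookkeeping and the reduction to cancelling letters are routine; the one step that takes a little care is the computation $\phi_{x}(h^{-1}g) = 1$, where axiom (SS8) and Lemma~\ref{le: two}(1) are doing the real work, and this is the step I would expect to be the main (and only mild) obstacle. An alternative, even shorter route would be to observe that the atoms of $X^{\ast}G$ are the $xg$ with $x \in X$, that $h \cdot (xg) = (h \cdot x)(h|_{x}g)$ equals $xg$ precisely when $h \in \ker\phi_{x}$, and then to invoke the earlier characterisation of right cancellative Levi monoids as those whose bimodule of atoms is left free together with Lemma~\ref{le: two}(5); I mention this mainly as a sanity check on the statement.
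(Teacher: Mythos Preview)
Your proof is correct and follows essentially the same approach as the paper: both directions hinge on the computation $(h^{-1}g)|_{y} = 1$ via (SS8) and Lemma~\ref{le: two}(1), together with the length-preserving action to separate the $X^{\ast}$- and $G$-components. The only difference is cosmetic: the paper carries out (1)$\Rightarrow$(2) for an arbitrary string $y \in X^{\ast}$ in one step (using that $|g \cdot y| = |l \cdot y|$ forces $w = z$ and $g \cdot y = l \cdot y$ in the free monoid), whereas you first reduce to single letters---an unnecessary but harmless detour.
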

\begin{proof}
(1)$\Rightarrow$(2).
We prove that $S$ is right cancellative.
Let $ab = cb$
where $a = wg$, $b = yk$ and $c = zl$.
We therefore get $w(g \cdot y) = z(l \cdot y)$ and $g|_{y}k = l|_{y}k$.
Since the action is length-preserving and by uniqueness we have that
$g \cdot y = l \cdot y$, $w = z$ and $g|_{y} = l|_{y}$.
Our result will follow if we can show that $g = l$.
Observe that $g^{-1}l \in G_{y}$.
Thus $\phi_{y}(g^{-1}l)$ is defined and equals $(g^{-1}l)|_{y}$.
But $(g^{-1}l)|_{y} = 1$ by Lemma~\ref{le: two}.
Thus $g = l$, as required.

(2)$\Rightarrow$(1).
We prove that $\phi_{x}$ is injective.
Let $g,h \in G_{x}$ and suppose that $\phi_{x}(g) = \phi_{x}(h)$.
Thus $g|_{x} = h|_{x}$.
Now $gx = (g \cdot x)g|_{x} = xg|_{x}$,
and $hx = (h \cdot x)h|_{x} = xh|_{x}$.
Thus $gx = hx$.
By right cancellation, $g = h$, and so $\phi_{x}$ is injective.
\end{proof}

\begin{remark} {\em By part (5) of Lemma~\ref{le: two}, all the functions $\phi_{x}$ are injective precisely
when the functions $\phi_{x}$ are injective where the $x$ range over all letters.
If the left Rees monoid is actually irreducible, then by part (4) of Lemma~\ref{le: two}, 
it is a Rees monoid if any one of the maps $\phi_{x}$ is injective where $x$ is a letter.}
\end{remark}

\begin{proposition}\label{prop: self-replicating} Let $S = X^{\ast}G$ be a left Rees monoid.
Then the following are equivalent.
\begin{enumerate}

\item The functions $\phi_{x} \colon \: G_{x} \rightarrow G$ are surjective for all $x \in X^{\ast}$.

\item $\mathscr{R} = \mathscr{H}$.

\item $\mathscr{R} \subseteq \mathscr{L}$.

\item $\mathscr{L} = \mathscr{D}$.

\item For all $x \in X^{\ast}$ and $g \in G$ we have that $xg \, \mathscr{L} \, x$.

\end{enumerate}
\end{proposition}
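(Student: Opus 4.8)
The plan is to prove the five statements equivalent along the chain $(1)\Leftrightarrow(5)\Leftrightarrow(3)\Leftrightarrow(2)$ together with $(3)\Leftrightarrow(4)$. Only the first link $(1)\Leftrightarrow(5)$ uses the arithmetic of the self-similar action; the other links are formal manipulations of Green's relations, resting on the single structural fact that in a left Rees monoid $S=X^{\ast}G$ the $\mathscr{R}$-class of any element $xg$ is the set $xG=\{xk:k\in G\}$, which is immediate from part (1) of Lemma~\ref{le: greens} (and contains $x=x\cdot 1$), together with the identities $\mathscr{H}=\mathscr{R}\cap\mathscr{L}$ and $\mathscr{D}=\mathscr{L}\circ\mathscr{R}$ recalled in the introduction.

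For $(1)\Leftrightarrow(5)$ I would apply part (2) of Lemma~\ref{le: greens} with $y=x$ and $h=1$: it tells us that $xg\,\mathscr{L}\,x$ holds if and only if there is a unit $k$ with $k\cdot x=x$ and $k|_{x}=g$. But $k\cdot x=x$ says exactly that $k\in G_{x}$, and then $k|_{x}=\phi_{x}(k)$; hence $xg\,\mathscr{L}\,x$ is equivalent to $g\in\operatorname{im}(\phi_{x})$. Therefore condition (5) --- that $xg\,\mathscr{L}\,x$ for every $x\in X^{\ast}$ and every $g\in G$ --- is precisely the assertion that $\operatorname{im}(\phi_{x})=G$ for all $x\in X^{\ast}$, which is condition (1). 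No induction on word length is needed, although one could alternatively reduce (1) to letters using part (8) of Lemma~\ref{le: two}.

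The remaining links are routine. For $(5)\Leftrightarrow(3)$: the inclusion $\mathscr{R}\subseteq\mathscr{L}$ says exactly that $xg\,\mathscr{L}\,xh$ for all $x\in X^{\ast}$ and $g,h\in G$, and since $\mathscr{L}$ is transitive this holds if and only if it holds in the special case $h=1$, i.e. if and only if (5) holds. For $(3)\Leftrightarrow(2)$: since always $\mathscr{H}=\mathscr{R}\cap\mathscr{L}$, the inclusion $\mathscr{R}\subseteq\mathscr{L}$ is equivalent to $\mathscr{R}=\mathscr{H}$. For $(3)\Leftrightarrow(4)$: one always has $\mathscr{L}\subseteq\mathscr{D}=\mathscr{L}\circ\mathscr{R}$; if $\mathscr{R}\subseteq\mathscr{L}$ then $\mathscr{D}=\mathscr{L}\circ\mathscr{R}\subseteq\mathscr{L}\circ\mathscr{L}=\mathscr{L}$, giving $\mathscr{L}=\mathscr{D}$, and conversely $\mathscr{L}=\mathscr{D}$ forces $\mathscr{R}\subseteq\mathscr{D}=\mathscr{L}$.

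I do not expect a genuine obstacle. The one point that wants a little care is in the first step: the unit $k$ delivered by part (2) of Lemma~\ref{le: greens} must be one that actually fixes $x$, so that $k|_{x}$ is literally the value $\phi_{x}(k)$ of the map named in the statement, rather than merely the restriction of an arbitrary unit. A cardinality-free variant of $(1)\Leftrightarrow(2)$ is also available via part (3) of Lemma~\ref{le: greens}, which identifies $H_{xg}$ with $\{xh:gh^{-1}\in\operatorname{im}(\phi_{x})\}$, so that $H_{xg}=R_{xg}$ if and only if $\operatorname{im}(\phi_{x})=G$; I would avoid deducing this from Lemma~\ref{le: H-structure} directly, since inferring $\operatorname{im}(\phi_{x})=G$ from $|H_{xg}|=|\operatorname{im}(\phi_{x})|$ would tacitly assume $G$ finite.
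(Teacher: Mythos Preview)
Your proposal is correct and follows essentially the same approach as the paper. The paper proves the cycle $(1)\Rightarrow(2)\Rightarrow(3)\Rightarrow(4)\Rightarrow(5)\Rightarrow(1)$, whereas you organize the argument as the biconditionals $(1)\Leftrightarrow(5)$, $(5)\Leftrightarrow(3)$, $(3)\Leftrightarrow(2)$, $(3)\Leftrightarrow(4)$; the substantive step in both is the same application of part~(2) of Lemma~\ref{le: greens} linking surjectivity of $\phi_{x}$ to $xg\,\mathscr{L}\,x$, and the remaining links are the same formal Green's-relation manipulations. Your closing remark about not invoking Lemma~\ref{le: H-structure} via cardinality is well taken.
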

\begin{proof}
(1)$\Rightarrow$(2).
Suppose that all the functions $\phi_{x} \colon \: G_{x} \rightarrow G$ are surjective.
We have that $\mathscr{H} \subseteq \mathscr{R}$ always, so we need only establish the reverse inclusion.
We have that $xg \, \mathscr{R} \, xh$, by Lemma~\ref{le: greens}.
By assumption, there is $k \in G_{x}$ such that $\phi_{x}(k) = gh^{-1}$.
But by Lemma~\ref{le: greens}, this shows that $xg \, \mathscr{H} \, xh$, as required.

(2)$\Rightarrow$(3). $\mathscr{R} = \mathscr{H} = \mathscr{L} \cap \mathscr{R} \subseteq \mathscr{L}$.
Thus $\mathscr{R} \subseteq \mathscr{L}$.

(3)$\Rightarrow$(4). $\mathscr{L} \subseteq \mathscr{D} = \mathscr{R} \circ \mathscr{L} \subseteq \mathscr{L} \circ \mathscr{L} \subseteq \mathscr{L}$.
Thus $\mathscr{D} = \mathscr{L}$.

(4)$\Rightarrow$(5). By Lemma~\ref{le: greens}, we have $x \, \mathscr{R} \, xg$.
Thus $x \, \mathscr{D} \, xg$ and so $x \, \mathscr{L} \, xg$ by assumption.

(5) $\Rightarrow$ (1). 
Let $g \in G$ and $x \in X^{\ast}$.
By assumption, $xg \, \mathscr{L} \, x$.
By Lemma~\ref{le: greens}, there exists $k$ such that $k \cdot x = x$ and $k|_{x} = g$.
Thus $\phi_{x}(k) = g$ and so $\phi_{x}$ is surjective.
\end{proof}

\begin{lemma}\label{le: more_level_transitivity} Let $S = X^{\ast}G$ be a left Rees monoid.
Suppose that the functions $\phi_{x} \colon \: G_{x} \rightarrow G$ are surjective for all $x \in X^{\ast}$.
Then the action of $G$ on $X$ is transitive if and only if the action of $G$ on $X^{\ast}$ is level transitive.
\end{lemma}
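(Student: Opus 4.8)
One direction is immediate: if the action of $G$ on $X^{\ast}$ is level transitive then in particular it is transitive on $X^{1} = X$. So the content is in the converse, and the plan is to prove it by induction on the length $n$ of strings, showing that $G$ acts transitively on $X^{n}$ for every $n \geq 1$. The base case $n = 1$ is precisely the hypothesis that $G$ acts transitively on $X$.

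For the inductive step, suppose $G$ acts transitively on $X^{n}$, and let $y, y' \in X^{n+1}$. I would write $y = zx$ and $y' = z'x'$ with $z, z' \in X^{n}$ and $x, x' \in X$. By the induction hypothesis there is $g \in G$ with $g \cdot z = z'$. Applying (SS4) and using that the action is length-preserving, we get
$$g \cdot y = g \cdot (zx) = (g \cdot z)(g|_{z} \cdot x) = z' w,$$
where $w = g|_{z} \cdot x \in X$. So after applying $g$ the first $n$ letters already agree with those of $y'$, and it remains to adjust the final letter from $w$ to $x'$ \emph{without disturbing the prefix} $z'$.

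This last adjustment is where the surjectivity hypothesis enters, and it is the only nontrivial point in the argument. By transitivity of the action on $X$, choose $k \in G$ with $k \cdot w = x'$. Since $z' \in X^{\ast}$, the map $\phi_{z'} \colon G_{z'} \rightarrow G$ is surjective by assumption, so there exists $h \in G_{z'}$ with $h|_{z'} = k$. Then, using (SS4) once more together with $h \cdot z' = z'$,
$$h \cdot (z'w) = (h \cdot z')(h|_{z'} \cdot w) = z'(k \cdot w) = z'x' = y'.$$
Hence $(hg) \cdot y = h \cdot (g \cdot y) = h \cdot (z'w) = y'$, which proves transitivity on $X^{n+1}$ and completes the induction. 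The main obstacle — really the whole idea — is recognising that surjectivity of the restriction map $\phi_{z'}$ is exactly the tool that lets one lift an element of $G$ acting on the last coordinate to an element of the stabiliser $G_{z'}$, so that the already-matched prefix is preserved; everything else is bookkeeping with the axioms (SS1)--(SS8) and Lemma~\ref{le: greens}.
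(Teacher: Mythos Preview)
Your proof is correct and essentially identical to the paper's own argument: both proceed by induction on the length, splitting a string of length $n+1$ as a prefix of length $n$ followed by a letter, using the inductive hypothesis to match the prefixes, and then invoking surjectivity of $\phi_{z'}$ to lift an element of $G$ (chosen by transitivity on $X$) to the stabiliser $G_{z'}$ so as to fix the last letter without disturbing the prefix. The only differences are notational.
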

\begin{proof} Only one direction needs proving.
We use induction. 
Suppose that $G$ is transitive on $X^{n}$.
We prove that it is transitive on $X^{n+1}$.
Let $u$ and $v$ be strings of length $n+1$.
Then $u = wx$ and $v = zy$ where $x,y \in X$.
By the induction hypothesis there is a $g \in G$ such that
$g \cdot w = z$.
Thus $g \cdot u = (g \cdot w)(g|_{w} \cdot x) = z(g|_{w} \cdot x)$.
By the transitivity of $G$ on $X$, there is $k \in G$ such that $k \cdot (g|_{w} \cdot x) = y$,
and by assumption, there is $h \in G_{z}$ such that $h|_{z} = k$.
Thus 
$h \cdot (z(g|_{w} \cdot x))  
=
(h \cdot z)(h|_{z} \cdot g|_{w} \cdot x)
=
zy
=
v
$,
and so $(hg) \cdot u = v$.
Thus the action is level transitive.
\end{proof}

A monoid $S$ is said to be {\em right reversible}
if for all $a,b \in S$ the set $Sa \cap Sb \neq \emptyset$. 

\begin{proposition}\label{prop: recurrent} Let $S = X^{\ast}G$ be a left Rees monoid.
Then the following are equivalent.
\begin{enumerate}

\item The action of $G$ on $X$ is transitive, and the functions $\phi_{x} \colon \: G_{x} \rightarrow G$ are surjective for all $x \in X^{\ast}$.

\item The monoid $S$ has a maximum proper principal left ideal.

\item  $S/\mathscr{L}$ is an infinite descending chain.

\item $S$ is right reversible.

\end{enumerate}
\end{proposition}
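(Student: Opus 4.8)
The plan is to prove the cycle of implications $(1)\Rightarrow(3)\Rightarrow(2)\Rightarrow(4)\Rightarrow(1)$, drawing on the structure theory developed above. For $(1)\Rightarrow(3)$: assuming $G$ acts transitively on $X$ and that every $\phi_x$ is surjective, Lemma~\ref{le: more_level_transitivity} promotes transitivity on $X$ to level transitivity on $X^{\ast}$, so by part (3) of Proposition~\ref{prop: level_transitivity} the poset $S/\mathscr{J}$ is an infinite descending chain order-isomorphic to $\mathbb{N}$ with its reverse order; meanwhile Proposition~\ref{prop: self-replicating} gives $\mathscr{L}=\mathscr{D}$, and $\mathscr{D}=\mathscr{J}$ by Lemma~\ref{le: Greens_relations}, so $S/\mathscr{L}=S/\mathscr{J}$ and $(3)$ holds. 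For $(3)\Rightarrow(2)$: the group of units forms the top $\mathscr{L}$-class, since $Sg=S$ exactly when $g$ is invertible, and the length function strictly increases along any chain of proper inclusions of principal left ideals; hence an infinite descending chain of $\mathscr{L}$-classes is order-isomorphic to $\mathbb{N}$ reversed, so it has a unique $\mathscr{L}$-class $L_c$ immediately below the units, and then $Sc$ is a proper principal left ideal that contains every proper principal left ideal, i.e.\ the maximum one.

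For $(2)\Rightarrow(4)$: let $M=Sc$ be the maximum proper principal left ideal (so $c$ is noninvertible) and show $Sa\cap Sb\neq\emptyset$ for all $a,b\in S$ by induction on $\lambda(a)+\lambda(b)$. If $a$ or $b$ is invertible then $Sa=S$ or $Sb=S$ and the intersection is nonempty. Otherwise $Sa$ and $Sb$ are proper principal left ideals, hence contained in $M$, so $a=a'c$ and $b=b'c$ with $\lambda(a')<\lambda(a)$ and $\lambda(b')<\lambda(b)$; by the inductive hypothesis there are $u,v$ with $ua'=vb'$, whence $ua=vb\in Sa\cap Sb$.

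For $(4)\Rightarrow(1)$, the crux, suppose $S$ is right reversible and write $S=X^{\ast}\bowtie G$. Given letters $x,y\in X$, pick $s,t$ with $sx=ty$ and write $s=(u,a)$, $t=(v,b)$; computing both Zappa-Sz\'ep products and comparing the $X^{\ast}$-components (after matching lengths) forces $u=v$ and $a\cdot x=b\cdot y$, so $y=(b^{-1}a)\cdot x$ and $G$ is transitive on $X$. Now fix a string $x\in X^{\ast}$ and an element $g\in G$, and pick $s=(u,a)$, $t=(v,b)$ with $sx=t(xg)$; comparing components now gives $a\cdot x=b\cdot x$ and $a|_x=b|_x g$, so $k:=b^{-1}a\in G_x$. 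Since $a\cdot x=b\cdot x$, axiom (SS8) together with part (1) of Lemma~\ref{le: two} (which yields $b^{-1}|_{b\cdot x}=(b|_x)^{-1}$) give $\phi_x(k)=(b^{-1}a)|_x=b^{-1}|_{b\cdot x}\,a|_x=(b|_x)^{-1}a|_x=g$, so $\phi_x$ is onto; as $x$ was arbitrary, $(1)$ follows and the cycle closes.

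I expect $(4)\Rightarrow(1)$ to be the main obstacle: the transitivity half is a routine normal-form comparison, but deducing surjectivity of \emph{every} $\phi_x$ from right reversibility is the only place where the full force of the hypothesis is used, and it needs the restriction identities of Lemma~\ref{le: two} handled with care; the induction in $(2)\Rightarrow(4)$ is the other mildly delicate point, where one must confirm that the witnessing intersection genuinely descends to strictly shorter elements.
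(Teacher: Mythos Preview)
Your cycle $(1)\Rightarrow(3)\Rightarrow(2)\Rightarrow(4)\Rightarrow(1)$ is correct, and each step goes through as you describe. The only place that deserves a second look is $(3)\Rightarrow(2)$: you claim the chain is order-isomorphic to $\mathbb{N}$ reversed, which is true but not quite immediate from ``the length strictly increases along proper inclusions''; what that observation really buys you is that above any $\mathscr{L}$-class there are only finitely many others, and that already yields a maximum proper class (take any atom). Your induction in $(2)\Rightarrow(4)$ is clean, and your Zappa--Sz\'ep calculation in $(4)\Rightarrow(1)$ is correct, including the use of (SS8) and Lemma~\ref{le: two}(1).

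The paper organises things differently. Instead of your inductive $(2)\Rightarrow(4)$ and computational $(4)\Rightarrow(1)$, it proves $(4)\Rightarrow(3)$ in one line: a left Rees monoid is equidivisible and hence \emph{left} rigid (if $Sa\cap Sb\neq\emptyset$ then $ua=vb$, and equidivisibility forces $a\in Sb$ or $b\in Sa$), so right reversibility makes all principal left ideals comparable and $S/\mathscr{L}$ is a chain. The equivalence $(1)\Leftrightarrow(2)$ is proved directly, and $(4)$ then follows from the chain structure. So the paper never needs to reconstruct $(1)$ from $(4)$ by a normal-form computation. Your route is more self-contained and shows explicitly how the self-similar data are recovered from right reversibility; the paper's route is shorter because it exploits left rigidity, a structural consequence of equidivisibility that you do not invoke.
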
 
\begin{proof}
(1)$\Rightarrow$(2). Let $x \in X$.
We prove that $Sx$ is the maximum principal left ideal.
Let $y \in X$.
Then by transitivity there is $g \in G$ such that $g \cdot x = y$.
Thus $gx = (g \cdot x)g|_{x} = yg|_{x}$.
By Proposition~\ref{prop: self-replicating}, we have that $yg|_{x} \, \mathscr{L} \, y$.
Thus $Sgx = Sx = Syg|_{x} = Sy$.
It follows that $Sx = Sy$ for all $y \in X$.
By Proposition~\ref{prop: self-replicating}, we have that $Sx = Syg$ for all $y \in X$ and $g \in G$.
Now let $uk$ be an arbitrary element of $S$ where $|u| > 1$.
Then $u = u'y$ where $|y| = 1$.
It follows that $Suk \subseteq Syk = Sx$, by the above.
We have proved that $Sx$ is indeed the maximum proper principal left ideal.

(2)$\Rightarrow$(1).
Let $Sxg$ be the maximum proper principal left ideal.
We prove first that $|x| = 1$.
Suppose not.
Then we can write $x = uy$ where $|y| = 1$ and $|u| \geq 1$.
Thus $Sxg = Suyg \subseteq Syg$.
By assumption $Sxg = Syg$.
It follows that $yg = sxg$ for some $s \in S$.
Now $|y| = 1$ and $|x| > 1$ and so $s$ would have to be invertible at worst
but then by uniqueness we get a contradiction by comparing the lengths
of the elements of $X^{\ast}$ on each side.
We deduce as claimed that $|x| = 1$.

Since $Sxg$ is a maximum principal left ideal we have that $Sx \subseteq Sxg$.
By length considerations again, there is an invertible element $k$ such that $x = kxg$.
But then $Sx = Skxg = Sxg$.
Hence the maximum principal left ideal has the form $Sx$ where $x \in X$.
Let $y \in X$.
Then $Sy \subseteq Sx$ and so by length considerations there is $k \in G$ such that
$y = kx$ and so $y = (k \cdot x)k|_{x}$.
By uniqueness $k|_{x} = 1$ and so $y = k \cdot x$.
We have therefore proved that $G$ is transitive on $X$.

Let $y \in X$ and $h \in G$.
Then $Syh \subseteq Sx$.
Thus there is an invertible element $k$ such that $yh = kx$.
Thus $Syh = Skx = Sx = Sy$.
We have therefore shown that $yh \, \mathscr{L} \, y$.
By Lemma~\ref{le: greens}, there exists $k \in G_{y}$ such that $\phi_{y}(k) = h$.
It follows that the function $\phi_{y} \colon \: G_{y} \rightarrow G$ is surjective for all $y \in X$.
By part (8) of Lemma~\ref{le: two}, 
it follows that $\phi_{u}$ is surjective for all $u \in X^{\ast}$.

(2)$\Rightarrow$(3). By Lemma~\ref{le: more_level_transitivity}, the action is also level transitive.
By Proposition~\ref{prop: level_transitivity}, 
this implies that the principal two-sided ideals form an infinite descending chain.
By Proposition~\ref{prop: self-replicating}, 
we have that $\mathscr{L} = \mathscr{D}$.
By Proposition~\ref{prop: self-replicating},
it follows that $x \, \mathscr{L} \, y$ if and only if $\left| x \right| = \left| y \right|$. 
Thus for any $x,y \in X^{n}$, we have that $Sx = Sy$.
Let $x$ be a string of length $n$.
Write $x = yz$ where $y$ has length $1$ and $z$ has length $n-1$.
We have that $Sx \subseteq Sz$.
It follows that the principal left ideals form an infinite descending chain.

(3)$\Rightarrow$(2). Immediate.

(1)$\Rightarrow$(4). 
By (2) above, the principal left ideals form a chain.
It follows that given any two such ideals one must be contained in the other and so the
semigroup is right reversible.

(4)$\Rightarrow$(3). 
Suppose that the semigroup is right reversible.
By assumption, any two principal left ideals must intersect and so one must be contained in the other
since left Rees monoids are also left rigid.
It follows that the principal left ideals form an infinite descending chain.
\end{proof}

A self-similar group action is {\em recurrent}
if $G$ is transitive on $X$, and $\phi_{x}$ is onto for any $x \in X$ \cite{N1}.
We shall also say that the corresponding left Rees monoid is {\em recurrent}.
Such monoids were, in fact, the subject of Rees's classic paper \cite{Rees}.
Rees's theory can now be phrased in our terms.

\begin{theorem}[Rees]\label{the: Rees} Let $G$ be a group and let $\alpha \colon G \rightarrow G$ be an endomorphism of $G$.
Put $S = \mathbb{N} \times G$ with product defined by 
$$(m,g)(n,h) = (m+n,\alpha^{n}(g)h).$$
Then $S$ is a left Rees monoid in which the principal right ideals form an infinite descending chain
order isomorphic to the natural numbers with their reverse order.
Furthermore, every  left Rees monoid in which the principal right ideals form such an infinite descending chain
is isomorphic to a monoid of the form $S$.
\end{theorem}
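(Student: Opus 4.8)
The plan is to prove both directions through the correspondence between left Rees monoids and self-similar group actions (Theorem~\ref{the: correspondence}), recognising the monoids $\mathbb{N} \times G$ of the statement as precisely the Zappa-Sz\'ep products built on a \emph{one-letter} free monoid.

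For the first direction, I would take $X = \{x\}$ a one-element set, so $X^{\ast} = \{x^{n} : n \geq 0\}$, and define a self-similar action of $G$ on $X^{\ast}$ by declaring the action trivial, $g \cdot x^{n} = x^{n}$, and the restriction by $g|_{x^{n}} = \alpha^{n}(g)$. Checking (SS1)--(SS8) is routine: the only substantive points are (SS6), which reduces to $\alpha^{m+n} = \alpha^{n} \circ \alpha^{m}$, and (SS8), which reduces to $\alpha$ being a homomorphism. By Theorem~\ref{the: correspondence} the Zappa-Sz\'ep product $X^{\ast} \bowtie G$ is a left Rees monoid, and its product $(x^{m},g)(x^{n},h) = (x^{m}(g \cdot x^{n}), g|_{x^{n}}h) = (x^{m+n}, \alpha^{n}(g)h)$ is, under $x^{m} \leftrightarrow m$, exactly the product in the statement. (Alternatively one can verify directly that $S = \mathbb{N} \times G$ is a monoid with identity $(0,1)$, is left cancellative, right rigid, and satisfies (LR3) and (LR4).) The principal right ideals are then easy to compute: $(m,g)S = \{(m+n,\alpha^{n}(g)h) : n \geq 0, h \in G\} = \{k : k \geq m\} \times G$, which depends only on $m$; hence they are the sets $I_{m} = \{k : k \geq m\} \times G$ and $S = I_{0} \supset I_{1} \supset I_{2} \supset \cdots$ is an infinite descending chain order-isomorphic to $\mathbb{N}$ with its reverse order.

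For the converse, let $S$ be a left Rees monoid whose principal right ideals form such a chain, and by Theorem~\ref{the: correspondence} write $S = X^{\ast}G$ for a basis $X$. Since $S/\mathscr{R}$ is order-isomorphic to $X^{\ast}$ under the prefix ordering (combining the observation preceding this result with Lemma~\ref{le: greens}), the hypothesis forces the prefix order on $X^{\ast}$ to be a chain; but distinct letters of a free monoid are prefix-incomparable, so $|X| = 1$, say $X = \{x\}$. The action of $G$ on $X^{\ast}$ is length-preserving with singleton levels, hence trivial, so (as in the remark on semidirect products) $X^{\ast}$ acts on $G$ on the right by endomorphisms. Setting $\alpha(g) = g|_{x}$, axiom (SS8) with trivial action shows $\alpha$ is an endomorphism of $G$, and $g|_{x^{n}} = \alpha^{n}(g)$ follows by induction from (SS5) and (SS6). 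The product in $S = X^{\ast}G$ becomes $(x^{m}g)(x^{n}h) = x^{m}(g \cdot x^{n})\,g|_{x^{n}}\,h = x^{m+n}\alpha^{n}(g)h$, so $x^{m}g \mapsto (m,g)$ is an isomorphism of $S$ onto $\mathbb{N} \times G$ with the product of the statement. The main obstacle is this converse, and within it the small but indispensable observation that a free monoid with totally ordered prefix order is free on a single generator; after that, the identification of the family $g \mapsto g|_{x^{n}}$ with the powers of the single endomorphism $g \mapsto g|_{x}$ is immediate, and the remainder is routine verification.
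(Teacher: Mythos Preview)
The paper does not give its own proof of this theorem: it is stated as a reformulation of Rees's classical result \cite{Rees} in the language of left Rees monoids, immediately after the sentence ``Rees's theory can now be phrased in our terms,'' and no argument follows. Your proof is correct and is precisely the argument the paper's machinery is set up to provide: the forward direction realises $\mathbb{N}\times G$ as the Zappa--Sz\'ep product $X^{\ast}\bowtie G$ for a one-letter alphabet with trivial action and restriction $g|_{x^{n}}=\alpha^{n}(g)$, and the verification of (SS1)--(SS8) and of the right-ideal structure is exactly as you indicate. For the converse you correctly invoke the identification of $S/\mathscr{R}$ with $X^{\ast}$ under the prefix order (stated in the paper just before Proposition~3.4), deduce $|X|=1$ from total ordering, observe that the action is forced to be trivial, and read off the endomorphism $\alpha=\phi_{x}$ from (SS8); the isomorphism $x^{m}g\mapsto(m,g)$ is then immediate. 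There is nothing to compare against, but your argument is the natural one in this framework and is complete.
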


We denote the monoid described above by $\mathbb{N} \, \ast_{\alpha} \, G$.
The structure of {\em recurrent Rees monoids}  as defined above can be analysed in more detail. 
This is because such a monoid is a right Rees monoid in which the principal left ideals form an infinite descending chain.
We may therefore apply the dual form of Theorem~\ref{the: Rees}.
Let $G$ be a group and let $\alpha \colon \: G \rightarrow G$ be a homomorphism.
On the set $G \times \mathbb{N}$ define a product by
$$(g,p)(h,q) = (g\alpha^{p}(h),p+q).$$
Then this defines a right cancellative monoid in which the set of principal left ideals forms an infinite descending chain.
We denote this monoid by $G \ast_{\alpha} \mathbb{N}$.
The monoid $G \ast_{\alpha} \mathbb{N}$ is cancellative precisely when $\alpha$ is injective;
this follows by Proposition~\ref{prop: cancellative}, though it is also an easy direct calculation.

\begin{theorem}\label{the: recurrent_Rees} Let $G$ be a group and let $\alpha \colon \: G \rightarrow G$ be an injective homomorphism with image $H$.
Then  
$G \ast_{\alpha} \mathbb{N}$
is a recurrent Rees monoid having $\left| G : H \right|$ proper maximal principal right ideals
and every recurrent Rees monoid is isomorphic to one constructed in this way.
\end{theorem}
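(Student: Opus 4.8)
The plan is to establish the two assertions — that $G \ast_{\alpha} \mathbb{N}$ is a recurrent Rees monoid with $|G:H|$ maximal proper principal right ideals, and that every recurrent Rees monoid arises in this way — by assembling the machinery already developed, chiefly the dual of Theorem~\ref{the: Rees} and Proposition~\ref{prop: recurrent}. For the construction I would first equip $G \ast_{\alpha} \mathbb{N}$ with the map $\lambda(g,p) = p$, check that it is a homomorphism onto $\mathbb{N}$ whose kernel $G \times \{0\}$ is exactly the group of units, so that $\lambda$ is a length function; since an element of length at least $2$ factors nontrivially as $(g,1)(1,p-1)$, the atoms are precisely the pairs $(g,1)$ and $\lambda$ is the normalized length function. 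The monoid $G \ast_{\alpha} \mathbb{N}$ is equidivisible — this can be read off the dual of Theorem~\ref{the: Rees}, or checked directly: from $(a,p)(b,q) = (c,r)(d,s)$ with $p \leq r$ one computes $a^{-1}c = \alpha^{p}(b\,\alpha^{r-p}(d)^{-1})$, so $a^{-1}c \in \mathrm{im}(\alpha^{p})$ and $v = (b\,\alpha^{r-p}(d)^{-1},\, r-p)$ is the required interleaving element. When $\alpha$ is injective, $G \ast_{\alpha} \mathbb{N}$ is in addition left cancellative — by Proposition~\ref{prop: cancellative}, or directly since $\alpha^{p}(h) = \alpha^{p}(h')$ then forces $h = h'$ — hence a Rees monoid; and as its principal left ideals form an infinite descending chain, Proposition~\ref{prop: recurrent} shows that the associated self-similar action is recurrent. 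Thus $G \ast_{\alpha} \mathbb{N}$ is a recurrent Rees monoid.

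To count the maximal proper principal right ideals, Lemma~\ref{le: bradbury} tells us that these are exactly the ideals $aS$ with $a$ an atom, hence the ideals $(g,1)S$ with $g \in G$. Using Lemma~\ref{le: Greens_relations} together with the description of the units, $(g,1)S = (g',1)S$ iff $(g,1) = (g',1)(k,0)$ for some $k \in G$, i.e. iff $g'^{-1}g = \alpha(k)$, i.e. iff $gH = g'H$ where $H = \mathrm{im}(\alpha)$. So the maximal proper principal right ideals are in bijection with the left cosets of $H$ in $G$, of which there are $|G:H|$.

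For the converse, let $S$ be a recurrent Rees monoid with group of units $G$. Being a Rees monoid, $S$ is cancellative, and by Proposition~\ref{prop: recurrent} its principal left ideals form an infinite descending chain; hence $S$ is a right cancellative (indeed right Rees) monoid of exactly the kind produced by the dual of Theorem~\ref{the: Rees}, which yields an endomorphism $\alpha$ of $G$ with $S \cong G \ast_{\alpha} \mathbb{N}$. Cancellativity of $S$ forces $G \ast_{\alpha} \mathbb{N}$ to be cancellative, so $\alpha$ is injective by Proposition~\ref{prop: cancellative}; writing $H = \mathrm{im}(\alpha)$, the previous paragraph then identifies the number of maximal proper principal right ideals of $S$ with $|G:H|$, completing the argument.

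I do not expect a genuine obstacle here: the proof is essentially an orchestration of results already proved. The points that need care are the bookkeeping of the various dual statements — that $G \ast_{\alpha} \mathbb{N}$ qualifies as a right Rees monoid, and that Proposition~\ref{prop: recurrent} becomes applicable only once left cancellativity has been secured — together with the small verification that $\lambda(g,p) = p$ is the normalized length function, so that ``atom'' coincides with ``element of length one''. If one prefers to avoid invoking the dual of Theorem~\ref{the: Rees} when establishing equidivisibility, the identity $a^{-1}c = \alpha^{p}(b\,\alpha^{r-p}(d)^{-1})$ recorded above is then the only nonroutine step.
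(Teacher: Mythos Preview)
Your proposal is correct and follows essentially the same route as the paper, which states the theorem without a formal proof and relies on the preceding paragraph: apply the dual of Theorem~\ref{the: Rees} to identify recurrent Rees monoids with the monoids $G \ast_{\alpha} \mathbb{N}$, use Proposition~\ref{prop: cancellative} (in its dual form, or the direct check you give) to see that cancellativity is equivalent to injectivity of $\alpha$, and invoke Proposition~\ref{prop: recurrent} for the chain condition on principal left ideals. Your additional verification of the coset count for the maximal proper principal right ideals and the explicit equidivisibility calculation are welcome elaborations of details the paper leaves implicit.
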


\begin{remark}{\em 
There is a connection between recurrent Rees monoids and strong representations of the polycyclic monoids \cite{JL1}.
Let $G$ be a group and $H$ a subgroup with finite index $n$.
Then the set of cosets $G/H$ forms a partition of $G$ where all the blocks have the same cardinality as $H$.
We shall concentrate on the case where $H$ is isomorphic to $G$.
Let $\alpha \colon G \rightarrow H$ be an isomorphism and let $G = \bigcup_{i=1}^{n} g_{i}H$ be a coset decomposition of $G$.
Then we can define bijections $\alpha_{i} \colon G \rightarrow g_{i}H$ by $\alpha_{i}(g) = g_{i}\alpha(g)$.
This is a strong representation of $P_{n}$ in $I(G)$ which we call a {\em strong affine representation}.
From the isomorphism  $\alpha^{-1} \colon H \rightarrow G$ we may construct an irreducible
Rees monoid $S$ in which $S/\mathscr{L}$ is an infinite descending chain.
There is a bijection between the sets of bases of $S$ and the coset decompositions $G = \bigcup_{i=1}^{n} g_{i}H$.
The data of such a Rees monoid and a basis may be used to construct a strong affine representation
$P_{n} \rightarrow I(G)$ to the symmetric inverse monoid on $G$ and every such representation arises in this way.}
\end{remark}

We now turn to the full restriction maps;
these are the maps $G \rightarrow G$ given by $g \mapsto g|_{x}$ where $x \in X^{\ast}$.

\begin{lemma}\label{le: yan} 
Let $S$ be a left Rees monoid.
Then the following are equivalent.
\begin{enumerate}
\item The functions from $G$ to itself given by $g \mapsto g|_{x}$ are injective for all $x \in X^{\ast}$. 

\item The monoid $S$ is cancellative and each $\mathscr{L}$-class contains at most one element from $X^{\ast}$.
\end{enumerate}
\end{lemma}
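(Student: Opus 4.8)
The plan is to derive the equivalence from three facts already in hand: Lemma~\ref{le: greens}, which describes the $\mathscr{L}$-relation on a left Rees monoid $S = X^{\ast}G$; Proposition~\ref{prop: cancellative}, which says $S$ is cancellative precisely when every $\phi_{x} \colon G_{x} \rightarrow G$ is injective; and parts (6) and (7) of Lemma~\ref{le: two}, which let me pass between ``$g \mapsto g|_{x}$ is injective for all $x \in X^{\ast}$'' and ``for every $x \in X^{\ast}$, $g|_{x} = 1$ implies $g = 1$''. The overall strategy is that injectivity of the full restriction maps $g \mapsto g|_{x}$ is exactly the conjunction of injectivity of the $\phi_{x}$ (which governs cancellativity) and of units acting freely enough on $X^{\ast}$ that $\mathscr{L}$-related strings coincide.

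For (1) $\Rightarrow$ (2) I would argue as follows. If every $g \mapsto g|_{x}$ is injective, then so is each restriction $\phi_{x} \colon G_{x} \rightarrow G$, so $S$ is right cancellative by Proposition~\ref{prop: cancellative}, hence cancellative since $S$ is left cancellative by definition. For the $\mathscr{L}$-class statement, take $x, y \in X^{\ast}$ with $x \,\mathscr{L}\, y$; Lemma~\ref{le: greens} supplies a unit $k$ with $k \cdot y = x$ and $k|_{y} = 1$. Since $1|_{y} = 1$ and $g \mapsto g|_{y}$ is injective, $k = 1$, and then $x = 1 \cdot y = y$. So each $\mathscr{L}$-class contains at most one string.

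For (2) $\Rightarrow$ (1) the one step that requires an idea is building an $\mathscr{L}$-relation out of the hypothesis $g|_{x} = 1$. Given $x \in X^{\ast}$ and $g \in G$ with $g|_{x} = 1$, I would observe that $gx = (g \cdot x)(g|_{x}) = g \cdot x$ already lies in $X^{\ast}$, and that since $g$ is a unit $Sx = S(gx)$, so $x \,\mathscr{L}\, g \cdot x$ with both members in $X^{\ast}$. The hypothesis then forces $g \cdot x = x$, that is $g \in G_{x}$, so $\phi_{x}(g) = g|_{x} = 1 = \phi_{x}(1)$; since $S$ is cancellative, $\phi_{x}$ is injective by Proposition~\ref{prop: cancellative}, giving $g = 1$. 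Doing this for every $x \in X^{\ast}$ and then invoking parts (6) and (7) of Lemma~\ref{le: two} upgrades ``$g|_{x} = 1 \Rightarrow g = 1$'' to injectivity of all the full restriction maps.

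The only real obstacle is that last manoeuvre in (2) $\Rightarrow$ (1): noticing that $g|_{x} = 1$ makes $gx$ land back in $X^{\ast}$ and be $\mathscr{L}$-equivalent to $x$, which is exactly what lets the two separate hypotheses of (2) combine. Once that bridge is built, the rest is routine citation of Lemma~\ref{le: greens} and Proposition~\ref{prop: cancellative}.
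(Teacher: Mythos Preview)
Your proposal is correct and follows essentially the same route as the paper's proof: both directions hinge on Lemma~\ref{le: greens}(2) to translate between $\mathscr{L}$-relations and the pair $(k\cdot y,\,k|_{y})$, on Proposition~\ref{prop: cancellative} for the cancellativity $\Leftrightarrow$ $\phi_{x}$ injective step, and on parts (6) and (7) of Lemma~\ref{le: two} to upgrade ``$g|_{x}=1\Rightarrow g=1$'' to full injectivity. The only cosmetic difference is that in $(2)\Rightarrow(1)$ the paper runs the argument only for letters $x\in X$ before invoking (7) and (6), whereas you carry it out for all $x\in X^{\ast}$; this is harmless redundancy, since your conclusion specialises to letters and the cited chain (7) then (6) still applies.
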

\begin{proof}
(1)$\Rightarrow$(2).
Suppose that the given functions are all injective.
Clearly the $\phi_{x}$ are injective and so $S$ is cancellative by Proposition~\ref{prop: cancellative}.
Suppose now that $x \, \mathscr{L} \, y$ where $x,y \in X^{\ast}$.
Then by Lemma~\ref{le: greens}, there exists a $k \in G$ such that
$k \cdot y = x$ and $k|_{y} = 1$.
By injectivity, we have that $k = 1$ and so $x = y$, as required.

(2)$\Rightarrow$(1).
Suppose that $S$ is cancellative and each $\mathscr{L}$-class contains at most one element from $X^{\ast}$. 
Let $g|_{x} = 1$ where $x \in X$.
Then by Lemma~\ref{le: greens}, we have that $g \cdot x \, \mathscr{L} \, x$.
But $g \cdot x \in X$ and so $g \cdot x = x$, by assumption.
Thus $g \in G_{x}$.
By Proposition~\ref{prop: cancellative}, we have that $g = 1$.
Thus by part (7) of Lemma~\ref{le: two},
the functions $g \mapsto g|_{x}$ are injective for all letters $x$.
It follows by part (6) of Lemma~\ref{le: two} that the functions $g \mapsto g|_{x}$ are
injective for all strings $x$.
\end{proof}

We now turn to surjectivity of the full restriction maps.

\begin{lemma}\label{le: tan}
Let $S$ be a left Rees monoid.
Then the following are equivalent.
\begin{enumerate}
\item The functions from $G$ to itself given by $g \mapsto g|_{x}$ are surjective for all $x \in X^{\ast}$. 

\item $S = GX^{\ast}$.

\item Each $\mathscr{L}$-class contains at least one element from $X^{\ast}$.
\end{enumerate}
\end{lemma}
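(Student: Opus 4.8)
The plan is to prove the cycle of implications $(1)\Rightarrow(2)\Rightarrow(3)\Rightarrow(1)$, using throughout the normal form $S = X^{\ast}G$ together with the defining identity $gx = (g\cdot x)(g|_{x})$ from the correspondence in Theorem~\ref{the: correspondence}.

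For $(1)\Rightarrow(2)$ I would take an arbitrary element $s = xh$ with $x\in X^{\ast}$ and $h\in G$ and rewrite it in the form (unit)(string). Since the function $g\mapsto g|_{x}$ is surjective by hypothesis, there is $g\in G$ with $g|_{x} = h^{-1}$, and then $gs = gxh = (g\cdot x)(g|_{x})h = (g\cdot x)h^{-1}h = g\cdot x$ lies in $X^{\ast}$, so that $s = g^{-1}(g\cdot x)\in GX^{\ast}$. As the reverse inclusion is trivial, this gives $S = GX^{\ast}$. The implication $(2)\Rightarrow(3)$ should then be immediate: given an $\mathscr{L}$-class with representative $s$, write $s = gy$ with $g\in G$ and $y\in X^{\ast}$ using $S = GX^{\ast}$; since $g$ is a unit we have $Sg = S$, hence $Ss = Sgy = Sy$, so $y$ is an element of $X^{\ast}$ lying in the $\mathscr{L}$-class of $s$.

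For $(3)\Rightarrow(1)$ I would first reduce to letters: by part (9) of Lemma~\ref{le: two} it is enough to show that $g\mapsto g|_{x}$ is surjective for each $x\in X$. Fix such an $x$ and an arbitrary target $h\in G$, and consider $xh^{-1}\in S$. By hypothesis its $\mathscr{L}$-class contains some $y\in X^{\ast}$, and applying part (2) of Lemma~\ref{le: greens} to $xh^{-1}\,\mathscr{L}\,y$ produces an invertible $k$ with $k\cdot y = x$ and $k|_{y} = h^{-1}$. Then $y = k^{-1}\cdot x$, so part (1) of Lemma~\ref{le: two} yields $k^{-1}|_{x} = (k|_{y})^{-1} = h$; thus $g = k^{-1}$ is the required preimage of $h$, and part (9) of Lemma~\ref{le: two} upgrades surjectivity from letters to all strings.

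The step I expect to be the main obstacle is $(3)\Rightarrow(1)$: the datum extracted from the $\mathscr{L}$-relation is a restriction $k|_{y}$ based at a letter $y$ which itself depends on the chosen target $h$, and turning this into surjectivity of the single map $g\mapsto g|_{x}$ relies on combining the string-to-letter reduction (part (9) of Lemma~\ref{le: two}) with the ``inverse of a restriction'' identity $(g|_{x})^{-1} = g^{-1}|_{g\cdot x}$ (part (1) of Lemma~\ref{le: two}). The other two implications are routine bookkeeping with the normal form and should not present any difficulty.
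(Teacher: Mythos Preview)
Your proof is correct and follows the same overall strategy as the paper: the cycle $(1)\Rightarrow(2)\Rightarrow(3)\Rightarrow(1)$, with the same use of the normal form and of Lemma~\ref{le: greens}.

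The one place where you take a longer path than necessary is $(3)\Rightarrow(1)$. You reduce to letters via part~(9) of Lemma~\ref{le: two} and then invoke the inverse identity $(g|_{x})^{-1}=g^{-1}|_{g\cdot x}$ to turn the datum $k|_{y}=h^{-1}$ into $k^{-1}|_{x}=h$. The paper avoids both detours: for an \emph{arbitrary} $x\in X^{\ast}$ and target $g\in G$, it considers $xg^{-1}$, picks $y\in X^{\ast}$ in its $\mathscr{L}$-class, and then applies Lemma~\ref{le: greens}(2) to the relation read in the other order, $y\,\mathscr{L}\,xg^{-1}$, which directly yields $k$ with $k\cdot x=y$ and $k|_{x}=g$. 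So the step you flagged as the main obstacle is in fact the most immediate one once you exploit the symmetry of $\mathscr{L}$, and neither the reduction to letters nor the inverse-of-restriction formula is needed.
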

\begin{proof}
(1) $\Rightarrow$ (2).
Suppose that the function from $G$ to itself given by $g \mapsto g|_{x}$ is surjective for all $x \in X^{\ast}$.
Let $xg \in S$.
By assumption there is an element $h \in G$ such that
$h^{-1}|_{x} = g^{-1}$.
Put $y = h^{-1} \cdot x$.
Then
$$hy = h(h^{-1} \cdot x) = [(hh^{-1}) \cdot x]h|_{h^{-1}\cdot x} = x(h^{-1}|_{x})^{-1} = xg$$
using Lemma~\ref{le: two}.
This immediately implies that $S = GX^{\ast}$.

(2) $\Rightarrow$ (3).
Suppose that $S = GX^{\ast}$.
Let $xg \in S$.
Then by assumption, $xg = hy$ for some $h \in G$ and $y \in X^{\ast}$.
Thus $x = h \cdot y$ and $g = h|_{y}$.
It follows by Lemma~\ref{le: greens} that $xg \, \mathscr{L} \, y$.

(3) $\Rightarrow$ (1).
Let $x \in X^{\ast}$ and $g \in G$.
By assumption, $xg^{-1} \, \mathscr{L} \, y$ for some $y \in X^{\ast}$.
By Lemma~\ref{le: greens}, there exists $k \in G$ such that
$k \cdot x = y$ and $k|_{x} = g$.
\end{proof}

We now have the following.

\begin{proposition}\label{prop: symmetric} Let $S$ be a left Rees monoid.
Then the following are equivalent.
\begin{enumerate}

\item The functions from $G$ to itself given by $g \mapsto g|_{x}$ are bijective for all $x \in X^{\ast}$.

\item $S$ is a cancellative monoid in which each $\mathscr{L}$-class contains exactly one element of $X^{\ast}$.

\item $S = GX^{\ast}$ uniquely.

\end{enumerate}
\end{proposition}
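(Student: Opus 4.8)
The plan is to assemble the proposition from Lemma~\ref{le: yan} and Lemma~\ref{le: tan}, which already characterise, respectively, injectivity and surjectivity of the full restriction maps $g \mapsto g|_{x}$, and then to translate between the $\mathscr{L}$-class formulation of (2) and the uniqueness formulation of (3). So the work splits cleanly as (1)$\Leftrightarrow$(2) and (2)$\Leftrightarrow$(3).

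First I would observe that condition (1)—bijectivity of every map $g \mapsto g|_{x}$—is precisely the conjunction of ``$g \mapsto g|_{x}$ is injective for all $x \in X^{\ast}$'' and ``$g \mapsto g|_{x}$ is surjective for all $x \in X^{\ast}$''. By Lemma~\ref{le: yan} the first conjunct is equivalent to: $S$ is cancellative and each $\mathscr{L}$-class contains at most one element of $X^{\ast}$. By Lemma~\ref{le: tan} the second conjunct is equivalent to: $S = GX^{\ast}$, equivalently each $\mathscr{L}$-class contains at least one element of $X^{\ast}$. Conjoining the two counting conditions collapses them into ``exactly one element of $X^{\ast}$ per $\mathscr{L}$-class'', so (1) is equivalent to ``$S$ cancellative and each $\mathscr{L}$-class contains exactly one element of $X^{\ast}$'', which is (2). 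Thus (1)$\Leftrightarrow$(2) is essentially bookkeeping on top of the two lemmas.

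For (2)$\Rightarrow$(3): Lemma~\ref{le: tan} already gives $S = GX^{\ast}$, so I only need uniqueness. Given $gy = g'y'$ with $g,g' \in G$ and $y,y' \in X^{\ast}$, multiplication by a unit preserves $\mathscr{L}$-classes (Lemma~\ref{le: Greens_relations}(2)), so $y \,\mathscr{L}\, gy = g'y' \,\mathscr{L}\, y'$; by (2) each $\mathscr{L}$-class meets $X^{\ast}$ in at most one point, hence $y = y'$, and then cancellativity forces $g = g'$. For (3)$\Rightarrow$(2): the inclusion $S = GX^{\ast}$ gives ``at least one element of $X^{\ast}$ per $\mathscr{L}$-class'' by Lemma~\ref{le: tan}; uniqueness gives ``at most one'', since if $x,y \in X^{\ast}$ with $x \,\mathscr{L}\, y$ then Lemma~\ref{le: greens}(2) supplies a unit $k$ with $k \cdot y = x$ and $k|_{y} = 1$, whence $ky = (k \cdot y)(k|_{y}) = x$, so $(k,y)$ and $(1,x)$ both decompose $x$ and therefore $k = 1$, $y = x$. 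Finally, to recover cancellativity from (3): by Proposition~\ref{prop: cancellative} together with part (5) of Lemma~\ref{le: two} it suffices to show that $\phi_{x}$ has trivial kernel for each letter $x$ (it is a homomorphism by part (2) of Lemma~\ref{le: two}), and indeed if $g \in G_{x}$ with $g|_{x} = 1$ then $gx = (g \cdot x)(g|_{x}) = x$, so uniqueness of the $GX^{\ast}$-decomposition forces $g = 1$. Combining, (3) yields (2).

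The only step that is not mere translation between already-established equivalences is the passage from (3) to cancellativity in the last part: $S = GX^{\ast}$ on its own says nothing about right cancellation, and one must genuinely exploit uniqueness—via the identity $gx = (g \cdot x)(g|_{x})$ for $g \in G_{x}$—to conclude that the kernels of the maps $\phi_{x}$ are trivial. Everything else reduces directly to Lemma~\ref{le: yan}, Lemma~\ref{le: tan}, Lemma~\ref{le: greens} and Proposition~\ref{prop: cancellative}.
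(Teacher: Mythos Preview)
Your proof is correct and follows essentially the same strategy as the paper: the equivalence (1)$\Leftrightarrow$(2) is assembled from Lemmas~\ref{le: yan} and~\ref{le: tan} exactly as you do, and the remaining link to (3) uses the same ingredients (Lemma~\ref{le: greens}, Lemma~\ref{le: two}, Proposition~\ref{prop: cancellative}). The only organisational difference is that the paper closes the cycle via (1)\&(2)$\Rightarrow$(3) and (3)$\Rightarrow$(1), proving injectivity of the \emph{full} restriction maps from uniqueness using part~(7) of Lemma~\ref{le: two}, whereas you go (2)$\Rightarrow$(3)$\Rightarrow$(2) and recover cancellativity by showing the \emph{restricted} maps $\phi_{x}$ have trivial kernel; your route is if anything slightly more economical.
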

\begin{proof}
The equivalence of (1) and (2) follows from Lemmas~\ref{le: yan} and \ref{le: tan}.

(1) and (2) $\Rightarrow$ (3). 
By (1), the functions are in particular surjective and so
by Lemma~\ref{le: tan}, we have that $S = GX^{\ast}$.
Suppose that $gx = hy$ where $g,h \in G$ and $x,y \in X^{\ast}$.
Then $g \cdot x = h \cdot y$ and $g|_{x} = h|_{y}$.
Observe that
$$(h^{-1}g)|_{x} 
= (h^{-1}|_{g \cdot x})g|_{x} 
= (h^{-1}|_{h \cdot y})g|_{x}
= (h|_{y})^{-1}g|_{x} = 1$$
using Lemma~\ref{le: two}.
Also $(h^{-1}g) \cdot x = y$.
It follows by Lemma~\ref{le: greens} that $x\, \mathscr{L} \, y$.
But then by (2), it follows that $x = y$.
We then deduce that $g = h$ using (1), the fact that the functions are injective.
We have shown that every element of $G$ can be written uniquely as a product of an element of $G$ and an element of $X^{\ast}$.
 
(3) $\Rightarrow$ (1).
Suppose that
$S = GX^{\ast}$ {\em uniquely}.
In particular, $S = GX^{\ast}$ and so by Lemma~\ref{le: tan} the function from $G$ to itself given by $g \mapsto g|_{x}$ is surjective.
We now show that these functions are injective.
By part (6) of Lemma~\ref{le: two}, it is enough to check when $x \in X$.
We will verify that the condition of part (7) of Lemma~\ref{le: two} holds.
Suppose that $g|_{x} = 1$.
Then
$gx = (g \cdot x)g|_{x} = g \cdot x$.
Thus
$$1(g \cdot x) = gx.$$
Thus by uniqueness $g = 1$, and so by part (7) of Lemma~\ref{le: two} the function from $G$ to itself
given by $g \mapsto g|_{x}$ is injective.
Thus (1) holds.
\end{proof}

We shall say that a left Rees monoid $S = X^{\ast}G$ is {\em left symmetric} if it is 
a Rees monoid in which each $\mathscr{L}$-class contains exactly one element from $X^{\ast}$.
In particular, this means that the poset of principal right ideals is isomorphic to the poset of principal left ideals.
In a left symmetric Rees monoid, each left basis is a right basis.
A {\em symmetric Rees monoid} is a Rees monoid that is both left and right symmetric.

\section{Decompositions of left Rees monoids}

The goal of this section is to show how each left Rees monoid can be decomposed using the monoid analogues
of free products with amalgamation and HNN extensions.
Free products with amalgamation are a well-known tool in semigroup theory and we use standard results.
However, our definition of HNN extensions for the kinds of monoids that we shall be dealing with appears to be new
and we shall need to develop their theory from scratch.

Let $S = X^{\ast}G$.
We denote the set of orbits of $G$ on $X$ by $\{X_{1}, \ldots, X_{m}\}$.
By Proposition~\ref{prop: level_transitivity}, the number of such orbits is equal to
the number of maximal proper principal ideals of $S$ and so is an intrinsic property of the monoid.
Recall that if the action of $G$ on $X$ is transitive then we say that the associated left Rees monoid is {\em irreducible}.
For each $i \leq i \leq m$ define $S_{i} = X_{i}^{\ast}G$.

\begin{lemma} 
With the above notation, $S_{i}$ is a submonoid of $S$ and is itself an irreducible left Rees monoid.
In addition, the group of units of all the $S_{i}$ is equal to $G$.
If $i \neq j$ then $S_{i} \cap S_{j} = G$.
\end{lemma}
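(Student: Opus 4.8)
The plan is to realise each $S_{i}$ as a Zappa-Sz\'ep product and read the three claims off the correspondence of Theorem~\ref{the: correspondence} together with uniqueness of the $X^{\ast}G$ normal form. First I would check that the ambient self-similar action $(G,X)$ restricts to the orbit $X_{i}$. The one thing to verify is that $g \cdot x \in X_{i}^{\ast}$ whenever $x \in X_{i}^{\ast}$ and $g \in G$: since $X_{i}$ is a $G$-orbit on $X$ it is invariant under the left action on letters, and applying (SS4) repeatedly to $x = x_{1} \cdots x_{n}$ shows that every letter of $g \cdot x$ is of the form $g' \cdot x_{k}$ with $x_{k}$ a letter of $x$, hence lies in $X_{i}$. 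Since $g|_{x} \in G$ automatically, the action and restriction restrict to operations $G \times X_{i}^{\ast} \to X_{i}^{\ast}$ and $G \times X_{i}^{\ast} \to G$, and as (SS1)--(SS8) are identities they pass to the restriction; so $(G,X_{i})$ is a self-similar group action, using that $X_{i}^{\ast}$, being the submonoid of the free monoid $X^{\ast}$ generated by a subset of its basis, is free on $X_{i}$.

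Next I would note that $X_{i}^{\ast} \times G$ is then closed under the Zappa-Sz\'ep product of $X^{\ast} \times G = S$ and contains $(1,1)$, so $S_{i} = X_{i}^{\ast}G$ is a submonoid of $S$ which is exactly the Zappa-Sz\'ep product $X_{i}^{\ast} \bowtie G$ of the restricted action; Theorem~\ref{the: correspondence} then gives that $S_{i}$ is a left Rees monoid. For the group of units: any element of $S_{i}$ that is invertible in $S_{i}$ is invertible in $S$, hence lies in $G$, and $G \subseteq S_{i}$ consists of units of $S_{i}$, so the group of units of $S_{i}$ is $G$. For irreducibility: $G$ acts on $X_{i}$ with a single orbit, so part (2) of Proposition~\ref{prop: level_transitivity} shows $S_{i}$ is irreducible.

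For the last claim, let $i \neq j$. Distinct orbits are disjoint, so $X_{i} \cap X_{j} = \emptyset$ and hence $X_{i}^{\ast} \cap X_{j}^{\ast} = \{1\}$ in the free monoid $X^{\ast}$. An element $w \in S_{i} \cap S_{j}$ can be written $w = xg = yh$ with $x \in X_{i}^{\ast}$, $y \in X_{j}^{\ast}$, $g,h \in G$, and uniqueness of the $X^{\ast}G$-factorisation in $S$ forces $x = y \in X_{i}^{\ast} \cap X_{j}^{\ast} = \{1\}$, so $w = g \in G$. Since $G \subseteq S_{i} \cap S_{j}$ trivially, this gives $S_{i} \cap S_{j} = G$.

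The only genuinely non-formal point is the invariance $g \cdot x \in X_{i}^{\ast}$ used in the first step --- that a $G$-orbit of letters stays closed under the induced left action on arbitrary strings. Once that is in hand, everything else is a transcription of the correspondence theorem and of the uniqueness of the $X^{\ast}G$ normal form, so I do not anticipate any further obstacle.
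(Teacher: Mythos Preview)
Your proposal is correct and follows essentially the same route as the paper: both hinge on the observation that $g \cdot x \in X_{i}^{\ast}$ whenever $x \in X_{i}^{\ast}$ (the paper phrases this as ``by induction, $gy = y'g'$ with $y' \in X_{i}^{\ast}$''), from which closure of $S_{i}$ under multiplication is immediate. The paper then dismisses the remaining claims as ``easy to prove'', whereas you spell them out by invoking the correspondence theorem and Proposition~\ref{prop: level_transitivity}(2); this is just a more explicit version of the same argument.
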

\begin{proof} Let $xg,yh \in X_{i}^{\ast}G$.
Then $(xg)(yh) = x(gy)h$.
By induction, it is easy to prove that $gy = y'g'$ where $y' \in X_{i}^{\ast}$.
Thus $S_{i}$ is closed under multiplication.
The remainder of the lemma is easy to prove.
\end{proof}

We call the left Rees monoids $S_{i}$ constructed above the {\em irreducible components} of the left Rees monoid $S$.
For the next part, we shall recall a classical result proved by Bourbaki \cite{Bourbaki}, 
but we also refer the reader to \cite{Lallement1,Dekov} for alternative proofs.
We state the result in a form dual to that given in 
Proposition~5 of Section~7, p. I. 81 of \cite{Bourbaki}.

\begin{theorem}[Bourbaki]\label{the: bourbaki} Let $S_{i}$, where $i \in I$, be a family of monoids.
We suppose that $G$ is a submonoid of all the $S_{i}$ and that $S_{i} \cap S_{j} = G$ for all $i \neq j$.
Denote the identity of $G$ by $1$.
Assume that for each $i$ there is a subset $F_{i} \subseteq S_{i}$, containing 1, such that
the map from $F_{i} \times G$ to $S_{i}$ given by $(f_{i},g) \mapsto f_{i}a$, with the product taken in $S_{i}$, is a bijection. 
Then every element of $\ast_{G} S_{i}$, the free product with amalgamation of the $S_{i}$, can 
be written uniquely in the form
$$s_{1} \ldots s_{m}g 
\mbox{ where } g \in G \mbox{ and }
s_{1} \in F_{i_{1}}, \ldots, s_{m} \in F_{i_{m}}
\mbox{ and }
i_{1} \neq i_{2},
i_{2} \neq i_{3}, \ldots.$$
\end{theorem}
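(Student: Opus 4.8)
The plan is to establish both existence and uniqueness of the displayed normal form by the van~der~Waerden trick: realise the amalgamated free product $P = \ast_{G}S_{i}$ as a monoid of transformations of the set $N$ of all \emph{formal} normal forms, so that uniqueness reduces to injectivity of the tautological map $N\to P$. Two preliminary remarks. Each $s\in S_{i}$ has a \emph{unique} expression $s = fg$ with $f\in F_{i}$ and $g\in G$; and $F_{i}\cap G = \{1\}$, since $f\in F_{i}\cap G$ would yield the two decompositions $f = f\cdot 1 = 1\cdot f$. Hence ``$s_{k}\neq 1$'' and ``$s_{k}\notin G$'' are equivalent, and for genuine uniqueness one should take the $s_{k}$ in $F_{i_{k}}\setminus\{1\}$.

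Existence is a short induction on the least number of $S_{i}$-syllables needed to express an element $p\in P$. If $p\in S_{i}$, use $S_{i} = F_{i}G$. Otherwise $p = p't$ with $t\in S_{i}$ and $p' = f_{1}\cdots f_{m}g$ already in normal form by induction; replacing $gt$ by its $F_{i}\times G$-decomposition in $S_{i}$ when $i\neq i_{m}$, or $f_{m}gt$ by its $F_{i}\times G$-decomposition when $i = i_{m}$, and discarding the new last syllable if it happens to equal $1$, produces a normal form for $p$. No cascade occurs, since only the last one or two syllables are disturbed and the preceding prefix is untouched.

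For uniqueness, let $N$ be the set of tuples $(f_{1},\dots,f_{m};g)$ with $m\geq 0$, $g\in G$, $f_{k}\in F_{i_{k}}\setminus\{1\}$, and $i_{1}\neq i_{2}\neq\cdots\neq i_{m}$, and let $\mathrm{Map}(N,N)$ be the monoid of all self-maps of $N$ under composition. First, for $g\in G$ define $\mu(g)\colon N\to N$ by recursion on $m$: put $\mu(g)(\,;h) = (\,;gh)$ and $\mu(g)(f_{1},\dots,f_{m};h) = \bigl(f_{1}';\,\mu(g')(f_{2},\dots,f_{m};h)\bigr)$, where $gf_{1} = f_{1}'g'$ in $S_{i_{1}}$. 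The decisive point is that $gf_{1}\notin G$ (as $f_{1}\notin G$), so $f_{1}'\neq 1$ always; thus $\mu(g)$ preserves both the length and the leading index, and an immediate induction gives $\mu(1) = \mathrm{id}$ and $\mu(g_{1}g_{2}) = \mu(g_{1})\mu(g_{2})$. Next, for $t\in S_{i}$ set $\lambda_{i}(t) = \mu(t)$ if $t\in G$; otherwise write $t = f_{*}g_{*}$ with $f_{*}\in F_{i}\setminus\{1\}$ and define $\lambda_{i}(t)(f_{1},\dots,f_{m};h)$ to be $\mu(g_{*})(f_{1},\dots,f_{m};h)$ with $f_{*}$ prepended when $m = 0$ or $i_{1}\neq i$, and to be $\mu(g_{*}')(f_{2},\dots,f_{m};h)$ with $f_{*}'$ prepended --- omitting it when $f_{*}' = 1$ --- when $i_{1} = i$, where $tf_{1} = f_{*}'g_{*}'$ in $S_{i}$. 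Each $\lambda_{i}\colon S_{i}\to\mathrm{Map}(N,N)$ then sends $1$ to $\mathrm{id}$ and restricts to $\mu$ on $G$. The one point still requiring work is that $\lambda_{i}$ is a homomorphism, $\lambda_{i}(st) = \lambda_{i}(s)\lambda_{i}(t)$: this is a finite case analysis (according to whether $s,t$ lie in $G$ and whether the argument's leading index equals $i$), each case collapsing by uniqueness of the $F_{i}\times G$-decomposition in $S_{i}$ together with the homomorphism property of $\mu$. Granting it, the $\lambda_{i}$ agree on $G$, so the universal property of the amalgamated free product of monoids supplies a monoid homomorphism $\Lambda\colon P\to\mathrm{Map}(N,N)$ restricting to each $\lambda_{i}$. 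Put $\pi(p) = \Lambda(p)(\,;1)$ and $\iota(f_{1},\dots,f_{m};g) = f_{1}\cdots f_{m}g$. Then $\Lambda(\iota(f_{1},\dots,f_{m};g)) = \lambda_{i_{1}}(f_{1})\cdots\lambda_{i_{m}}(f_{m})\,\mu(g)$; applying this to $(\,;1)$, the factor $\mu(g)$ yields $(\,;g)$ and then each $\lambda_{i_{k}}(f_{k})$, acting on a tuple with empty or leading index $\neq i_{k}$, simply prepends $f_{k}$, so the output is $(f_{1},\dots,f_{m};g)$. Hence $\pi\iota = \mathrm{id}_{N}$, so $\iota$ is injective; combined with the existence step this makes $\iota$ a bijection $N\to P$, which is precisely the assertion.

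The main obstacle is organisational --- packaging the definition of the $\lambda_{i}$ tightly enough that the homomorphism verification stays short. The one genuinely delicate phenomenon is that, while a group element never absorbs an $F$-factor, a non-group element $t$ of $S_{i}$ acting on a normal form whose first syllable also lies in $S_{i}$ may shorten the form, and one must confirm the shortened tuple is still legal; it is, because $\mu(g_{*}')$ leaves intact the leading index $i_{2}\neq i$ of the part on which it acts.
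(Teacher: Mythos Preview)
The paper does not prove this theorem; it merely quotes it from Bourbaki (with alternative references to Lallement and Dekov), so there is no ``paper's own proof'' to compare against. Your van der Waerden strategy is exactly the standard one and your organisation is clean, but there is a genuine gap at the step you yourself flag as decisive.

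You assert that for $g\in G$ and $f_{1}\in F_{i}\setminus\{1\}$ one always has $gf_{1}\notin G$, so that the leading $F$-component $f_{1}'$ in $gf_{1}=f_{1}'g'$ is never $1$. This is true when $G$ is a group (then $f_{1}=g^{-1}(gf_{1})\in G$ would force $f_{1}=1$), but the hypothesis of the theorem is only that $G$ is a \emph{submonoid}. A concrete counterexample: take $S=\{1,e,a,ae\}$ with $e^{2}=e$, $a^{2}=a$, $ea=e$, and the remaining products forced; then $G=\{1,e\}$, $F=\{1,a\}$, the map $F\times G\to S$ is bijective, yet $e\cdot a=e\in G$. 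In this situation your $\mu(e)$ applied to $(a;1)$ produces the illegal tuple $(1;e)$, so $\mu$ is not a self-map of $N$ and the rest of the construction collapses. The fix is not to patch $\mu$ to allow shortening (that wrecks the ``preserves leading index'' property you rely on later), but to switch sides: since the transversal decomposition has $G$ on the \emph{right}, one should build a \emph{right} action of $\ast_{G}S_{i}$ on $N$. Then the first step is always to combine the tail $g\in G$ with the incoming $t\in S_{i}$ inside $S_{i}$ and re-decompose $gt$, which only ever touches the last one or two syllables and never needs the false claim; associativity of each $\rho_{i}$ then reduces directly to associativity in $S_{i}$. Note that for the paper's only use of this theorem (Theorem~4.3), $G$ is the group of units of the $S_{i}$, so your argument as written does suffice there.
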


We now have the following.

\begin{theorem}\label{the: first_decomposition}\mbox{}
\begin{enumerate}

\item Let $S_{1}, \ldots, S_{m}$ be any set of irreducible left Rees monoids whose groups of units are all equal to $G$.
Suppose in addition that $S_{i} \cap S_{j} = G$ whenever $i \neq j$.
Then the free product with amalgamation $S_{1} \, \ast_{G} \, S_{2} \, \ast_{G} \ldots \ast_{G} \, S_{m}$
is a left Rees monoid with group of units $G$ and irreducible components isomorphic to the $S_{i}$.

\item Each left Rees monoid is either irreducible or a free product with amalgamation of irreducible left Rees monoids
having the same groups of units.

\end{enumerate}
\end{theorem}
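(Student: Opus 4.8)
The plan is to establish both parts by reducing everything to the Zappa-Sz\'ep presentation $S = X^{\ast}G$ and then invoking Bourbaki's theorem (Theorem~\ref{the: bourbaki}) in a form adapted to our setting. For part (2), the decomposition is essentially already visible: given a left Rees monoid $S = X^{\ast}G$, partition the basis $X$ into the $G$-orbits $X_{1}, \ldots, X_{m}$ and form the irreducible components $S_{i} = X_{i}^{\ast}G$. Since $G$ acts on each $X_{i}^{\ast}$ with $gy = y'g'$ and $y' \in X_{i}^{\ast}$ (by the lemma preceding Theorem~\ref{the: bourbaki}), the submonoid $S_{i}$ is closed, has group of units $G$, and $S_{i} \cap S_{j} = G$ for $i \neq j$. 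First I would check the hypotheses of Bourbaki's theorem: take $F_{i} = X_{i}^{\ast}$, which contains $1 = \varepsilon$; the map $X_{i}^{\ast} \times G \to S_{i}$, $(y,g) \mapsto yg$, is a bijection precisely because $S_{i}$ is a left Rees monoid and every element factors uniquely as a basis-word times a unit. Then Bourbaki gives that $\ast_{G} S_{i}$ has elements uniquely of the form $s_{1} \ldots s_{k}g$ with $s_{j} \in X_{i_{j}}^{\ast}$ and consecutive indices distinct; concatenating these words recovers exactly $X^{\ast}G = S$, so $S \cong \ast_{G} S_{i}$. If $G$ is already transitive on $X$ then $m = 1$ and $S$ is irreducible; otherwise we have the nontrivial amalgamated free product decomposition.

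For part (1), I would argue in the reverse direction. Suppose we are handed irreducible left Rees monoids $S_{1}, \ldots, S_{m}$, all with group of units $G$, and $S_{i} \cap S_{j} = G$ for $i \neq j$. Each $S_{i}$ has a basis $X_{i}$ (a transversal of the generators of its maximal proper principal right ideals) on which $G$ acts transitively, and $S_{i} = X_{i}^{\ast}G$ with unique factorization. Again apply Bourbaki with $F_{i} = X_{i}^{\ast}$: the free product with amalgamation $T = S_{1} \ast_{G} \cdots \ast_{G} S_{m}$ has every element expressible uniquely as $s_{1} \ldots s_{k}g$ with the $s_{j}$ alternating between distinct $X_{i}^{\ast}$'s. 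Setting $X = X_{1} \sqcup \cdots \sqcup X_{m}$ (disjoint union, legitimate since the $X_{i}$ lie in monoids meeting only in $G$, and no letter is a unit), the normal form shows that every element of $T$ is uniquely a word in $X$ times an element of $G$, i.e.\ $T = X^{\ast}G$ with unique factorization. To conclude $T$ is a left Rees monoid I would verify conditions (LR1)--(LR4) of the Proposition characterizing left Rees monoids, or equivalently exhibit the self-similar action of $G$ on $X^{\ast}$: define $g \cdot x$ and $g|_{x}$ for $x \in X_{i}$ using the action already present in $S_{i}$ (note $gx \in S_{i}$, so $gx = x'g'$ with $x' \in X_{i} \subseteq X$), and extend multiplicatively via axioms (SS1)--(SS8), which hold because they hold in each $S_{i}$ and the orbits are disjoint. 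Then $T \cong X^{\ast} \bowtie G$ is a left Rees monoid by Theorem~\ref{the: correspondence}. Its maximal proper principal right ideals are generated by the letters, the $G$-orbits on $X$ are exactly the $X_{i}$, and so its irreducible components are the $X_{i}^{\ast}G \cong S_{i}$.

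The main obstacle I anticipate is the bookkeeping needed to show that the Bourbaki normal form in $\ast_{G} S_{i}$ translates faithfully into the statement "$T = X^{\ast}G$ uniquely," together with checking that the self-similar action axioms survive the amalgamation. The subtlety is that in a normal form $s_{1} \ldots s_{k}g$ each $s_{j}$ is a nonempty word from a single $X_{i_{j}}^{\ast}$ with $i_{j} \neq i_{j+1}$, but when we regrard $s_{1} \ldots s_{k}$ as a single word in $X = \sqcup X_{i}$ we are simply concatenating, and there is no constraint left --- so the map to $X^{\ast}$ is a bijection on these prefixes. One has to be slightly careful that the Bourbaki decomposition really does give uniqueness at the level of the full word and not just at the level of the alternating blocks; but a word in $X^{\ast}$ determines its maximal monochromatic blocks uniquely, so the two notions of normal form match. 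Once that translation is in hand, the verification that $T$ satisfies (LR1)--(LR4) is routine: left cancellativity and right rigidity are inherited from the $S_{i}$ on each block and follow globally from the unique normal form and equidivisibility of free monoids, while (LR3) follows because $[tT, T]$ for $t = yg$ is order-isomorphic to the set of prefixes of $y$ in $X^{\ast}$, which is finite, and (LR4) is immediate since each $S_{i}$ has a noninvertible element. The identification of the irreducible components then follows from Proposition~\ref{prop: level_transitivity}(1), since the $G$-orbits on the basis $X$ of $T$ are exactly the sets $X_{i}$.
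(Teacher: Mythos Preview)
Your proposal is correct and follows essentially the same approach as the paper: both parts are deduced from Bourbaki's normal form theorem (Theorem~\ref{the: bourbaki}) applied with $F_{i} = X_{i}^{\ast}$, together with the Zappa--Sz\'ep decomposition $S_{i} = X_{i}^{\ast}G$. The paper's own proof is in fact considerably terser than yours---it says only that (1) ``follows almost immediately from Theorem~\ref{the: bourbaki} and the structure of left Rees monoids'' and that (2) is ``virtually immediate by Theorem~\ref{the: bourbaki}''---so your expanded discussion of how the Bourbaki normal form matches the unique $X^{\ast}G$ factorization, and your explicit reconstruction of the self-similar action on $X = \sqcup X_{i}$, are exactly the details the paper leaves to the reader.
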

\begin{proof} 

(1) This follows almost immediately from Theorem~\ref{the: bourbaki} and the structure of left Rees monoids.

(2) Let $S = X^{\ast}G$ be a left Rees monoid.
As above, denote the set of orbits of $G$ on $X$ by $\{X_{1}, \ldots, X_{m}\}$.
For each $i \leq i \leq m$ define $S_{i} = X_{i}^{\ast}G$.
We have proved that each $S_{i}$ is irreducible.
We need to prove that $S$ is isomorphic to $\ast_{G}S_{i}$ but this is virtually immediate by Theorem~\ref{the: bourbaki}.
\end{proof}

Because of the above theorem, we shall focus our attention for the remainder of this section on irreducible left Rees monoids.
The key notion we shall need is a notion of conjugacy for monoids.
Unlike in the group case, our notion of conjugacy will come with a built-in parity.

Let $S$ be a monoid.
An ordered pair of elements $(a,b)$ of $S$ will be said to be {\em right conjugate} with respect
to the element $x \in S$ if $ax = xb$.
We see from Proposition~1.3.4 of \cite{Lothaire}, that the usual notion of conjugacy in free
monoids is related to our notion above.
To work with right conjugacy, we need extra structure.
The proof of the following lemma is trivial. 

\begin{lemma} Let $S$ be a left cancellative monoid and let $x \in S$ be chosen.
\begin{enumerate}

\item Given $a \in S$ there exists at most one element $b \in S$ such that $ax = xb$.

\item Suppose that $a_{1}x = xb_{1}$ and $a_{2}x = xb_{2}$.
Then $a_{1}a_{2}x = xb_{1}b_{2}$.

\end{enumerate}
\end{lemma}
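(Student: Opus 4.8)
The plan is to prove both parts by direct manipulation, using left cancellativity for part (1) and associativity together with the two hypotheses for part (2); no extra structure (length function, equidivisibility) is needed, which is exactly why the statement is flagged as trivial.

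For part (1), I would suppose that $ax = xb$ and also $ax = xb'$ for elements $b,b' \in S$. Chaining the two equalities gives $xb = xb'$, and left cancellativity of $S$ then forces $b = b'$. This is precisely the assertion that the right conjugate of $a$ with respect to $x$ is unique whenever it exists.

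For part (2) I would just compute, rebracketing and substituting. Starting from $a_{1}a_{2}x$, write it as $a_{1}(a_{2}x)$, use $a_{2}x = xb_{2}$ to get $a_{1}(xb_{2}) = (a_{1}x)b_{2}$, then use $a_{1}x = xb_{1}$ to get $(xb_{1})b_{2} = x(b_{1}b_{2})$. Hence $a_{1}a_{2}x = x(b_{1}b_{2})$, so $(a_{1}a_{2}, b_{1}b_{2})$ is again a right-conjugate pair with respect to $x$.

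There is no real obstacle here; the only point worth flagging is the asymmetry between the two halves. Part (1) genuinely uses left cancellativity — without it the right conjugate need not be unique — while part (2) uses nothing beyond associativity and the given equations. Read together, the lemma says that in a left cancellative monoid the partial assignment $a \mapsto b$ defined by $ax = xb$ is single-valued and multiplicative on its domain, which is the fact that will be used later to manufacture the conjugation homomorphisms underlying the HNN-type decomposition of an irreducible left Rees monoid.
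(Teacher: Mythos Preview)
Your proof is correct and is exactly the routine verification the paper has in mind; the paper itself omits the argument entirely, recording only that the lemma is trivial. Your additional remark that part~(2) needs only associativity while part~(1) genuinely uses left cancellativity is accurate and worth noting.
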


The above lemma enables us to prove the following.
Let $S$ be a left cancellative monoid and let $x \in S$ be fixed.
Define
$$S_{x}^{+} = \{ a \in S \colon ax \in xS\}
\mbox{ and }
S_{x}^{-} = \{ b \in S \colon xb \in aS\}.$$
Both of these are submonoids of $S$ and the map $\phi_{x} \colon S_{x}^{+} \rightarrow S_{x}^{-}$
given by $\phi_{x}(a) = b$ iff $ax = xb$ is a surjective homomorphism.
We now turn this around.\\

\noindent
{\bf Embedding problem }
Let $U$ and $V$ be left cancellative monoids and let $\phi \colon \: U \rightarrow V$ be a surjective homomorphism.
Find a left cancellative monoid $S$ that contains
$U$ and $V$ as submonoids 
and an element $x \in S$
such that
for all $u \in U$ we have that $ux = x \phi(u)$.
We say that $(S,x)$ {\em solves} the embedding problem.\\

It is clear that our question should be viewed as being an extension of the usual theory of HNN extensions for groups
to left cancellative monoids.
We shall not attempt to resolve this question in general.
Instead, we shall consider a special case.\\

\noindent
{\bf Special embedding problem }
Let $U$ and $V$ be {\em groups} and let  $\phi \colon U \rightarrow V$
be a surjective homomorphism.
Find a left cancellative monoid $S$ that contains
$U$ and $V$ as submonoids of the group of units of $S$ and an element $x \in S$ such that
for all $u \in U$ we have that $ux = x \phi(u)$.
We say that $(S,x)$ {\em solves} the special embedding problem.\\

We emphasise that even in the case where $\phi$ is injective, the special embedding problem
requires us to find a left cancellative monoid; in other words, we view groups as being left cancellative
monoids. Of course, we may also regard groups as groups: we shall take up that part of the story in the next section.

We show first that the special embedding problem is directly related to the structure of irreducible left Rees monoids.
Let $S = X^{\ast}G$ be an irreducible left Rees monoid. 
Let $x \in X$. 
There is a surjective homomorphism $\phi_{x} \colon G_{x} \rightarrow \mbox{im}(\phi_{x})$
between two subgroups of $G$.
Now for $h \in G_{x}$ we have that
$$hx = (h \cdot x)h|_{x} = x \phi_{x}(h).$$
Thus $(S,x)$ is a solution to the special emebdding problem for  $\phi_{x} \colon G_{x} \rightarrow \mbox{im}(\phi_{x})$.
It follows that irreducible left Rees monoids are solutions to instances of the special embedding problem.
In fact, the special embedding problem may always be solved by an irreducible left Rees monoid;
this follows immediately from our work in Section~2.3 on constructing irreducible left Rees monoids from
partial endomorphisms of their groups of units.

\begin{proposition} Let $H$ be a group and let $\phi$ be a partial endomorphism of $H$
with domain of definition $U$ and image $V$. 
Then there exists a left Rees monoid $S$ and an element $x \in S$ such that
$(S,x)$ solves the special embedding problem for $\phi$.
\end{proposition}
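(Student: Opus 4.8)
The plan is to construct $(S,x)$ directly from the machinery of Section~2.3 on irreducible bimodules, read in the reverse direction, by taking $S$ to be the tensor monoid of an appropriate covering bimodule. First I would repackage the partial endomorphism $\phi \colon U \rightarrow V$ (with $U,V \leq H$) as group data derived from $H$: put $\gamma = \{(u,\phi (u)) \colon u \in U\} \subseteq U \times V$, the graph of $\phi$. Since $\phi$ is a homomorphism, $\gamma$ is a subgroup of $U \times V$; it projects onto $U$ because it is a graph, and onto $V$ because $\phi$ is surjective. Hence $(U,\gamma,V)$ is group data in the sense of Section~2.3.

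Next I would feed this group data into the construction $\overline{X}(U,\gamma,V)$ to obtain an irreducible $H$-bimodule, and invoke the lemma characterising right freeness: because $\gamma$ is the graph of a partial endomorphism of $H$, the right $H$-action on $\overline{X}(U,\gamma,V)$ is free, so $\overline{X}(U,\gamma,V)$ is a covering bimodule. Put $S = \mathsf{T}(\overline{X}(U,\gamma,V))$. By part~(1) of Theorem~\ref{the: first_theorem}, $S$ is a Levi monoid with group of units $H$ whose associated bimodule of atoms is $\overline{X}(U,\gamma,V)$; since that bimodule is right free, $S$ is left cancellative, and therefore a left Rees monoid (in fact an irreducible one, since the bimodule is irreducible). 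In particular $U$ and $V$ are subgroups of the group of units $H$ of $S$.

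Finally I would take $x = [1,1] \in \overline{X}(U,\gamma,V) \subseteq S$. By the lemma guaranteeing that $\overline{X}(U,\gamma,V)$ possesses a point whose associated group data is $(U,\gamma,V)$ (the computation sketched just before that lemma, showing $U \subseteq G_{x}^{+}$ and equality, together with its dual), the point $x$ satisfies $G_{x}^{+} = U$ and $G_{x}^{-} = V$, and the associated surjective homomorphism $\phi_{x} \colon U \rightarrow V$ — which is genuinely single-valued precisely because $S$ is left cancellative (equivalently, because the bimodule is right free) — has graph $\gamma$, so that $\phi_{x} = \phi$. Then for each $u \in U$ we have $ux \in xH$, and the defining relation of $\phi_{x}$ gives $ux = x\,\phi_{x}(u) = x\,\phi (u)$. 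Thus $(S,x)$ solves the special embedding problem for $\phi$.

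I do not expect any serious obstacle: the statement is essentially an instance of the irreducible-bimodule construction, applied backwards. The single point that deserves explicit care is the identification $\phi_{x} = \phi$ — that is, verifying that the surjective homomorphism attached abstractly to the chosen point of $\overline{X}(U,\gamma,V)$, whose graph one reads off from the definition of the equivalence $\equiv$, really is $\gamma$ itself, and that left cancellativity is exactly what turns the relation $\{(g,h) \colon gx = xh\}$ into the graph of a function. Everything else (that $\gamma$ is valid group data, that $[1,1]$ realises it, that the tensor monoid of a covering bimodule is a left Rees monoid) has already been established in the excerpt.
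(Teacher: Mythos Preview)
Your proposal is correct and is exactly the approach the paper intends: the paper does not supply a separate proof for this proposition but simply remarks that it ``follows immediately from our work in Section~2.3 on constructing irreducible left Rees monoids from partial endomorphisms of their groups of units,'' and you have spelled out precisely that construction (package $\phi$ as group data $(U,\gamma,V)$, form $\overline{X}(U,\gamma,V)$, observe it is a covering bimodule, take the tensor monoid, and choose $x=[1,1]$).
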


We now prove that irreducible left Rees monoids are, in some sense, the most general solutions to the
special embedding problem.
First we need a simple lemma.

\begin{lemma}\label{le: normal_form} Let $S = X^{\ast}G$ be an irreducible left Rees monoid where $X = \{x_{i} \colon i \in I\}$.
We assume that $1 \in I$ and put $x = x_{1}$.
Put $H = G_{x}$.
Choose a coset decomposition $G = \bigcup_{i \in I}g_{i}H$. 
Then each element of $S$ can be written uniquely
in the form
$$(k_{1}x) \ldots (k_{p}x)g$$
where $k_{j} \in \{g_{i} \colon i \in I\}$ and $g \in G$ is arbitrary.
\end{lemma}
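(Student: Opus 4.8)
The plan is to induct on the length $\lambda(s) = |y|$, where $s = yg$ is the unique factorization of $s$ with $y \in X^{\ast}$ and $g \in G$ provided by the fact that $S = X^{\ast}G$ is a left Rees monoid (Theorem~\ref{the: correspondence}). The two ingredients are this unique $X^{\ast}G$-factorization and the fact that, since $S$ is irreducible, $G$ acts transitively on $X$ (Proposition~\ref{prop: level_transitivity}); transitivity makes the map $g_{i}H \mapsto g_{i}\cdot x$ a bijection from $\{g_{i}H \colon i \in I\}$ onto $X$, so that every atom can be written as $k \cdot x$ for a unique transversal element $k \in \{g_{i}\colon i \in I\}$.

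For existence: if $\lambda(s)=0$ then $s=g\in G$ and the empty product works. If $\lambda(s) = p \geq 1$, write $s = y_{1}\cdots y_{p}g$ with each $y_{j} \in X$. Choose the transversal element $k_{1}$ with $k_{1}\cdot x = y_{1}$. Since $k_{1}x = (k_{1}\cdot x)(k_{1}|_{x}) = y_{1}(k_{1}|_{x})$ inside $S$, we get $s = (k_{1}x)(k_{1}|_{x})^{-1}y_{2}\cdots y_{p}g$. The element $(k_{1}|_{x})^{-1}y_{2}\cdots y_{p}g$ again lies in $S$ and has length $p-1$, because the action of $G$ on $X^{\ast}$ is length-preserving; by the induction hypothesis it equals $(k_{2}x)\cdots(k_{p}x)g'$ for suitable transversal elements $k_{2},\dots,k_{p}$ and some $g' \in G$, which gives $s$ in the required form.

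For uniqueness, suppose $(k_{1}x)\cdots(k_{p}x)g = (l_{1}x)\cdots(l_{q}x)h$. Applying $\lambda$ forces $p=q$, and the case $p=0$ is immediate. If $p\geq 1$, I would compute the $X^{\ast}G$-normal form of the left-hand side: writing $(k_{2}x)\cdots(k_{p}x)g = zg_{0}$ with $z\in X^{\ast}$ of length $p-1$, and using $k_{1}x = (k_{1}\cdot x)(k_{1}|_{x})$ together with axiom (SS4) to expand $(k_{1}x)(zg_{0})$ as $(k_{1}\cdot x)(k_{1}|_{x}\cdot z)\bigl((k_{1}|_{x})|_{z}\,g_{0}\bigr)$, one sees that the first letter of the normal form is $k_{1}\cdot x$; similarly the right-hand side has first letter $l_{1}\cdot x$. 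By uniqueness of the $X^{\ast}G$-decomposition, $k_{1}\cdot x = l_{1}\cdot x$, hence $k_{1}^{-1}l_{1}\in G_{x}=H$, and since $k_{1},l_{1}$ come from the same transversal, $k_{1}=l_{1}$. As $S$ is left cancellative we may cancel $k_{1}x$ and invoke the induction hypothesis to conclude $k_{j}=l_{j}$ for all $j$ and $g=h$.

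The only genuine work is the normal-form bookkeeping in the uniqueness step, namely rewriting a product $(k_{1}x)zg_{0}$ as (string of length $p$)(group element) and reading off its first letter; this is a direct application of (SS4). Everything else is routine, the one conceptual point being that irreducibility, i.e. transitivity of $G$ on $X$, is exactly what is needed for the existence half, since it is what lets each atom $y_{1}$ be realized as $k_{1}\cdot x$.
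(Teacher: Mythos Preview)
Your argument is correct. The existence and uniqueness inductions both go through exactly as you describe, and the one substantive observation---that orbit--stabilizer together with transitivity of $G$ on $X$ makes $g_{i}H \mapsto g_{i}\cdot x$ a bijection onto $X$---is precisely the point on which everything rests.

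The paper's proof is a one-line repackaging of the same content: rather than running the induction by hand, it simply observes that $\{g_{i}x : i \in I\}$ is a transversal of the $\mathscr{R}$-classes of atoms (i.e.\ a new basis in the sense of the discussion following Theorem~\ref{the: correspondence}), and then the unique factorization $S = X'^{\ast}G$ with $X' = \{g_{i}x\}$ follows from the general structure theory already in place. Concretely, since $g_{i}x = (g_{i}\cdot x)(g_{i}|_{x})$, Lemma~\ref{le: greens}(1) gives $g_{i}x \,\mathscr{R}\, g_{i}\cdot x$, and the bijection $i \mapsto g_{i}\cdot x$ onto $X$ shows that each $\mathscr{R}$-class of atoms contains exactly one $g_{i}x$. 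Your induction is effectively an unwinding of the proof of Theorem~\ref{the: correspondence} applied to this new basis; the paper's version is shorter because it reuses that theorem, while yours is more self-contained.
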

\begin{proof} We need only prove that the set $\{g_{i}x \colon i \in I\}$ is a 
transversal of the $\mathscr{R}$-classes of the set of irreducible elements
which is straightforward.
\end{proof}

We  showed in Section~2,  how left Rees monoids could be constructed from partial endomorphisms of a group by 
constructing first a covering bimodule and then the tensor monoid of that bimodule.
We now show how left Rees monoids may be constructed directly from that partial endomorphism.

\begin{theorem}\label{the: second_structure_theorem}
Let $V$ be a group and let $\phi$ be a partial endomorphism of $V$ with domain of definition $U$.
Let
$$M = \mbox{\rm Mon}\langle V,x \colon ux = x\phi (u) \mbox{ for all } u \in U \rangle$$
be the monoid with generators $V \cup \{x\}$ and relations all those that hold in $V$ together with
the relations explicitly stated.
Then $M$ is a left Rees monoid isomorphic to the one constructed from the partial endomorphism $\phi$ by the methods of Section~2.
\end{theorem}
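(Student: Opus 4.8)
The plan is to exhibit a mutually inverse pair of monoid homomorphisms between $M$ and the left Rees monoid $S = X^{\ast}V$ constructed from $\phi$ by the tensor-monoid machinery of Section~2.3 (via the irreducible bimodule $\overline{X}(U,\gamma,V)$, where $\gamma$ is the graph of $\phi$). Write $S$ concretely using the normal form of Lemma~\ref{le: normal_form}: fix a coset decomposition $V = \bigcup_{i\in I} g_i U$, take $X = \{x_i : i\in I\}$ with a distinguished atom $x = x_1$ whose associated group data is $(U,\gamma,V)$, so that every element of $S$ is uniquely $(k_1 x)\ldots(k_p x)g$ with $k_j \in \{g_i\}$ and $g\in V$. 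Note that in $S$ the relation $ux = x\phi(u)$ holds for all $u\in U$, since $ux = (u\cdot x)u|_x = x\phi_x(u) = x\phi(u)$.

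First I would construct a homomorphism $\theta \colon M \to S$. Since $M$ is defined by a presentation, it suffices to send $V$ into $S$ by the identity inclusion and to send the generator $x$ to the atom $x\in S$, and then check that all defining relations of $M$ are satisfied in $S$: the relations holding in $V$ are automatic, and the relations $ux = x\phi(u)$ hold in $S$ by the computation just noted. This gives a well-defined monoid homomorphism $\theta$. Conversely, I would construct $\psi \colon S \to M$. Using the normal form, define
$$\psi\big((k_1 x)\ldots(k_p x)g\big) = (k_1 x)\ldots(k_p x)g,$$
interpreting the right-hand side as a product in $M$ of the images of $k_j \in V$, the generator $x$, and $g\in V$. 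The content here is that $\psi$ is well-defined and multiplicative. Well-definedness is clear because the normal form is unique; for multiplicativity one multiplies two normal forms in $S$ — this requires pushing a group element $g$ past a string $k x$, which in $S$ is governed by $gk x = (g k)x$ rewritten back into coset-representative form $g' x h$ with $g' \in \{g_i\}$ and $h\in V$ (using $g k = g' u$ for suitable $u \in U$, and then $u x = x\phi(u)$) — and one checks that exactly the same rewriting is valid in $M$ because $M$ satisfies the relations of $V$ together with $ux = x\phi(u)$. Thus $\psi$ is a homomorphism.

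Finally, $\theta$ and $\psi$ are mutually inverse: $\psi\theta$ fixes the generators $V$ and $x$, hence is the identity on $M$; and $\theta\psi$ fixes every normal form of $S$, hence is the identity on $S$. Therefore $M \cong S$, and since $S$ is a left Rees monoid (indeed the irreducible one built from $\phi$ in Section~2.3), so is $M$.

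The main obstacle I expect is verifying that $\psi$ respects multiplication — equivalently, that the defining relations of $M$ suffice to rewrite an arbitrary product of generators into the normal form of Lemma~\ref{le: normal_form}, with no collapse beyond what happens in $S$. Concretely, one must show that every element of $M$ can be written as $(k_1 x)\ldots(k_p x)g$ (existence of the normal form in $M$ follows by repeatedly applying $u x = x\phi(u)$ and the coset decomposition to move group elements rightward) and that two such expressions equal in $M$ are already equal in $S$; the latter is where the explicit structure of the tensor monoid, via Lemma~\ref{le: equality_of_tensors} and the correspondence of Section~2.3, does the work, guaranteeing that $M$ is not "smaller" than $S$. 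Once both the surjection $\theta$ and the section-like map $\psi$ are in hand, the isomorphism is forced.
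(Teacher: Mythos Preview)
Your proposal is correct and follows essentially the same route as the paper: both arguments hinge on the coset-based normal form $(k_1 x)\ldots(k_p x)g$, its existence in $M$ via the rewriting $ux = x\phi(u)$, and its uniqueness in $S$ via Lemma~\ref{le: normal_form}. The only difference is packaging: the paper shows directly that the canonical map $\theta\colon M\to S$ is surjective and injective, whereas you build an explicit inverse $\psi$ and verify it is a homomorphism --- but this verification amounts to precisely the same normal-form computation, so the two proofs are interchangeable.
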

\begin{proof} Let $V = \bigcup_{i \in I} g_{i}U$ be a coset decomposition.
An element of $M$ will be a represented by a string over the alphabet $V \cup \{x\}$
where we may multiply together in $V$ consecutive occurrences of elements of $V$.
It follows that an arbitrary element either contains no occurrence of $x$ and so is just an element of $V$ or has the form
$v_{1}xv_{2}xv_{3} \ldots xv_{m}$.
Given our coset decomposition, each element of $V$ can be written uniquely in the form $g_{i}u_{i}$
for some $u_{i} \in U$.
We claim that each element of $M$ is equal in the presentation to an element which is a product 
of elements of the form $g_{i}x$ followed by an element of $V$.
We call an element written in this way a {\em normal form.}
If the string contains no occurrence of $x$ then we simply have an element of $V$.
Suppose the string begins $vx$ where $v \in V$.
Then we may write $v = g_{i}u$.
Thus $vx = g_{i}ux = (g_{i}x)\phi(u)$.
This process may be continued working left-to-right to conclude with a string of the correct form.
Now let $S$ be the irreducible left Rees monoid contructed from the given partial endomorphism.
The group of units of $S$ is $V$.
Let $y$ be an atom in $S$ whose left stabilizer is $U$.
Using the above coset decomposition and Lemma~\ref{le: normal_form},
we see that $S$ is generated by $V$ and $y$ and that $uy = y\phi (u)$  for all $u \in U$. 
It follows that $S$ is a homomorphic image of $M$.
However, using Lemma~\ref{le: normal_form}, 
we see that distinct normal forms in $S$ represent distinct elements of $S$.
It follows that the homomorphism from $M$ to $S$ is also injective.
Thus $M$ is isomorphic to $S$.
\end{proof}

\begin{remark}{\em  The above theorem throws a new light on irreducible self-similar group actions.
They are intimately related to the notion of HNN extensions for left cancellative monoids.}
\end{remark}

\section{Universal groups}

For each monoid $S$ there is a group $U(S)$ and a homomorphism $\iota \colon \: S \rightarrow U(S)$
such that for each homomorphism $\phi \colon \: S \rightarrow G$ to a group there is a unique homomorphism
$\bar{\phi} \colon \: U(S) \rightarrow G$ such that
$\phi = \bar{\phi} \iota$.
The group $U(S)$ is called the {\em universal group} of $S$.
The monoid $S$ can be embedded in a group iff $\iota$ is injective.

\begin{remark} {\em In \cite{Cohn} and independently in the thesis of von Karger \cite{V},
it is proved, in completely different ways, that right rigid cancellative monoids can always be
embedded in their universal groups.
This result motivated this section but we shall not assume their results here.}
\end{remark}

We begin with the irreducible case.

\begin{proposition}\label{prop: universal1} Let $S = X^{\ast}G$ be an irreducible Rees monoid with group of units $G$.
Let an associated partial automorphism of $G$  
be $\phi \colon A \rightarrow B$.
Then the universal group of $S$ is isomorphic to the group HNN extension
$$G^{\ast} = \mbox{\rm Grp}\langle G,t \colon at = t\phi (a) \mbox{ for all } a \in A \rangle$$
and $S$ can be embedded in $G^{\ast}$.
\end{proposition}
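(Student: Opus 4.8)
The plan is to read off a monoid presentation of $S$, observe that forming the universal group merely reinterprets that presentation in the category of groups, and then use the normal form theorem for HNN extensions to check that the resulting map $\iota$ is injective.

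\emph{Step 1: a presentation of $S$.} Since a Rees monoid is in particular right cancellative, Proposition~\ref{prop: cancellative} shows that the maps $\phi_{x}$ are injective, so the associated partial endomorphism $\phi$ really is an isomorphism $\phi \colon A \to B$ between subgroups of $G$, where $A = G_{x}$ for a suitable atom $x$. By the construction of Section~2.3 expressing an irreducible Rees monoid through a partial automorphism of its group of units, together with Theorem~\ref{the: second_structure_theorem} (applied with the group $G$ in the role of $V$ and $A$ in the role of the domain of definition), one gets
$$S \;\cong\; M \;=\; \mathrm{Mon}\langle\, G,\, x \ \colon\ ax = x\phi(a)\ \text{ for all } a \in A\,\rangle ,$$
a monoid presentation whose relations consist of all relations valid in $G$ together with the displayed ones, and in which $x$ is an atom with $G_{x} = A$.

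\emph{Step 2: the universal group.} For any presented monoid $\mathrm{Mon}\langle \Sigma \mid R\rangle$ the universal group is the group $\mathrm{Grp}\langle \Sigma \mid R\rangle$ on the same generators and relations, with $\iota$ the induced map on $\Sigma$: indeed, for every group $H$ the monoid homomorphisms $\mathrm{Mon}\langle\Sigma\mid R\rangle \to H$ are exactly the functions $\Sigma \to H$ satisfying $R$, i.e. the group homomorphisms $\mathrm{Grp}\langle\Sigma\mid R\rangle \to H$, so the two groups enjoy the same universal property. Applying this to the presentation of Step~1 and using that in a group $ax = x\phi(a)$ is equivalent to $x^{-1}ax = \phi(a)$, we obtain
$$U(S) \;\cong\; \mathrm{Grp}\langle\, G,\, t \ \colon\ at = t\phi(a)\ \text{ for all } a \in A\,\rangle \;=\; G^{\ast},$$
the HNN extension of $G$ with stable letter $t$ and associated isomorphism $\phi \colon A \to B$. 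Under this identification $\iota \colon S \to U(S)$ restricts on $G$ to the canonical embedding $G \hookrightarrow G^{\ast}$ and sends the atom $x$ to $t$.

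\emph{Step 3: $\iota$ is injective.} Fix the transversal $\{g_{i}\}$ of the cosets $g_{i}A$ of $A = G_{x}$ in $G$ appearing in Lemma~\ref{le: normal_form}. By that lemma every element of $S$ has a unique normal form $(k_{1}x)\cdots(k_{p}x)g$ with each $k_{j} \in \{g_{i}\}$ and $g \in G$, and by Step~2 it maps under $\iota$ to $k_{1}t\,k_{2}t \cdots k_{p}t\,g \in G^{\ast}$. Here every occurrence of $t$ carries exponent $+1$, so there is no Britton pinch and the word is reduced; moreover each coefficient $k_{j}$ already lies in the chosen transversal of $A$ in $G$, so the word is in normal form for the HNN extension. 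By the normal form theorem for HNN extensions, distinct words of this shape represent distinct elements of $G^{\ast}$ — equivalently, from $k_{1}t\cdots k_{p}t\,g = k'_{1}t\cdots k'_{q}t\,g'$ one multiplies by the inverse of the right-hand side and repeatedly removes the unique central pinch $t\,(g{g'}^{-1})\,t^{-1}$, Britton's Lemma forcing $p=q$, $k_{j}=k'_{j}$ and $g=g'$. Since distinct elements of $S$ have distinct normal forms, $\iota$ is injective, so $S$ embeds in $G^{\ast}$. The step most in need of care is this last one: one has to verify that the transversal used in Lemma~\ref{le: normal_form} (left cosets of $G_{x}$) is precisely the one for which the all-positive-exponent words $k_{1}t\cdots k_{p}t\,g$ are HNN normal forms — that is, that the relation $ax = x\phi(a)$ pushes the $A$-component of a coefficient across $t$ in the direction that leaves a transversal representative on the left — and then to invoke the HNN normal form theorem in exactly that form; Steps~1 and~2 are quotations of earlier results or routine applications of universal properties.
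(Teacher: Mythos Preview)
Your proof is correct and follows essentially the same route as the paper: both start from the monoid presentation of Theorem~\ref{the: second_structure_theorem}, and both use the HNN normal form theorem together with Lemma~\ref{le: normal_form} to verify the embedding. The one cosmetic difference is in how the universal property is checked: you invoke the general principle that $U(\mathrm{Mon}\langle \Sigma \mid R\rangle) \cong \mathrm{Grp}\langle \Sigma \mid R\rangle$, whereas the paper verifies it directly by extending an arbitrary homomorphism $\theta \colon S \to U$ to $G^{\ast}$ and checking by hand that the extension respects the defining relations; your version is a little cleaner, the paper's a little more self-contained, but they amount to the same thing.
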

\begin{proof} 
We have proved that $S$ is isomorphic to the monoid
$$\mbox{\rm Mon}\langle G,t \colon at = t\phi (a) \mbox{ for all } a \in A \rangle.$$
Choose a coset decomposition $G = \bigcup_{i \in I} g_{i}A$ where $g_{1} = 1$.
We have proved that each element of $S$ can be written uniquely in the form
$$(k_{1}x) \ldots (k_{p}x)g$$
where $k_{l} \in \{g_{i} \colon i \in I\}$ and $g \in G$.
Choose also a coset decomposition $G = \bigcup_{j \in J} h_{j}B$ where $h_{1} = 1$.
By Theorem~IV~2.1 of \cite{LS} and reversing the order in which we write normal forms,
each element of $G^{\ast}$ may be written uniquely as a product of elements of the form
$g't$ where $g' \in  \{g_{i} \colon i \in I\}$ 
and
$h't^{-1}$ where $h' \in  \{h_{j} \colon j \in J\}$ 
followed by an arbitrary element of $G$ under the further condition
that neither $t1t^{-1}$ nor $t^{-1}1t$ occur.
It follows immediately from these normal forms that $S$ may be embedded in $G^{\ast}$.

It remains to show that $G^{\ast}$ is the universal group of $S$.
We shall prove this from the normal forms.
Let $\theta \colon \: S \rightarrow U$ be any homomorphism from $S$ to a group $U$.
From the normal form for elements of $G^{\ast}$ there is a unique extension 
of the function $\theta$ to a function $\psi \colon \: G^{\ast} \rightarrow U$
determined by the fact that $\psi (t) = \theta (t)$ and $\psi$ agrees with $\theta$ on $G$.
It remains to show that $\psi$ is a homomorphism.
When we multiply two normal forms in $G^{\ast}$ together, 
we apply the rules 
$at = t \phi (a)$, whenever $a \in A$, 
and
$bt^{-1} = t^{-1} \phi^{-1}(b)$, whenever $b \in B$,
and 
$tt^{-1} = t^{-1}t = 1$.
But  $\theta (a) \theta (t) = \theta (t) \theta (\phi (a))$ 
and
$\theta (b) \theta (t)^{-1} = \theta (t)^{-1} \theta (\phi^{-1}(b))$
and the result follows.
\end{proof}

The following is now immediate.

\begin{corollary} An irreducible left Rees monoid can be embedded in a group
if and only if it is right cancellative.
\end{corollary}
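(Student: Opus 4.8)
The plan is to treat the two implications very asymmetrically. The forward direction is immediate: if $S$ embeds in a group $G'$, then as a submonoid of $G'$ it inherits cancellativity, and in particular $S$ is right cancellative. Nothing is needed here beyond the fact that groups are cancellative, so no structure theory enters.

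For the converse, I would argue as follows. Suppose $S = X^{\ast}G$ is an irreducible left Rees monoid which is right cancellative. Every left Rees monoid is left cancellative by condition (LR1), so $S$ is in fact cancellative, hence a Rees monoid; since it was assumed irreducible, it is an \emph{irreducible Rees monoid}. By the structure theory developed in Section~2.3 (the classification of irreducible Levi monoids, specialized via the bifree case to Rees monoids), $S$ can be constructed from a partial automorphism $\phi \colon A \rightarrow B$ of its group of units $G$. Thus $S$ satisfies exactly the hypotheses of Proposition~\ref{prop: universal1}, whose conclusion is that $S$ embeds in the group HNN extension $G^{\ast} = \mbox{\rm Grp}\langle G,t \colon at = t\phi (a) \mbox{ for all } a \in A \rangle$. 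Since $G^{\ast}$ is a group, this yields the desired embedding, and the equivalence follows.

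The one step meriting explicit attention — and what I would flag as the mild obstacle, such as it is — is the upgrade from ``right cancellative irreducible left Rees monoid'' to ``irreducible Rees monoid governed by a partial automorphism'': one must note that left cancellativity is automatic, so that right cancellativity genuinely promotes $S$ to a two-sided Rees monoid, and that irreducibility persists under this, so that the hypotheses of Proposition~\ref{prop: universal1} are really met. Everything substantive was already carried out in that proposition and in the structure theorems preceding it, so no fresh computation is required.
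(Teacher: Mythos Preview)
Your argument is correct and matches the paper's intent exactly: the corollary is stated as immediate from Proposition~\ref{prop: universal1}, and your unpacking --- trivial forward direction, then right cancellativity plus automatic left cancellativity gives an irreducible Rees monoid to which Proposition~\ref{prop: universal1} applies --- is precisely the reasoning that makes it immediate.
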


We may now generalize the above results.

\begin{proposition}\label{prop: universal2} 
Let $S$ be a  Rees monoid with group of units $G$.
Let the irreducible components of $S$ be $S_{1}, \ldots, S_{m}$.
Then the universal group of $S$ is the amalgamated free product
$U(S_{1}) \ast_{G} U(S_{2}) \ast_{G} \ldots \ast_{G} U(S_{m})$.
Furthermore, $S$ is embedded in its universal group.
\end{proposition}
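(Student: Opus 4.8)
The plan is to combine the decomposition theorem for Rees monoids (Theorem~\ref{the: first_decomposition}) with the universal-property characterization of amalgamated free products of groups, and then to chase through the normal forms supplied by Theorem~\ref{the: bourbaki} and by the group-theoretic version of the Bourbaki/HNN normal form used already in Proposition~\ref{prop: universal1}. First I would recall that, by Theorem~\ref{the: first_decomposition}(2), since $S$ is a Rees monoid it is either irreducible---in which case Proposition~\ref{prop: universal1} already does the job---or $S \cong S_{1} \ast_{G} S_{2} \ast_{G} \cdots \ast_{G} S_{m}$, the amalgamated free product of monoids, where the $S_{i}$ are the irreducible components, all with group of units $G$. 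So we are reduced to computing the universal group of an amalgamated free product of monoids over a common group of units.

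The key step is the claim that the universal group functor sends the monoid amalgamated free product $\ast_{G} S_{i}$ to the group amalgamated free product $\ast_{G} U(S_{i})$, where the amalgamation on the group side is along the canonical copies of $G$ (recall each $\iota\colon S_{i} \to U(S_{i})$ restricts to the identity on $G$, since $G$ is already a group and hence maps isomorphically). To see this I would verify the universal property directly: a homomorphism from $\ast_{G} S_{i}$ to a group $H$ is, by the universal property of the monoid amalgamated free product, the same thing as a family of homomorphisms $S_{i} \to H$ agreeing on $G$; each of these factors uniquely through $U(S_{i})$ by definition of the universal group; and a family of group homomorphisms $U(S_{i}) \to H$ agreeing on $G$ is, by the universal property of the group amalgamated free product, the same as a single homomorphism $\ast_{G} U(S_{i}) \to H$. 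Composing these bijections shows $\ast_{G} U(S_{i})$ has exactly the universal property defining $U(\ast_{G} S_{i})$, so the two groups are canonically isomorphic. Combined with Proposition~\ref{prop: universal1}, which identifies each $U(S_{i})$ with the HNN extension $G \ast_{\phi_{i}}$ determined by the partial automorphism of the $i$-th component, this gives $U(S) \cong U(S_{1}) \ast_{G} \cdots \ast_{G} U(S_{m})$.

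It remains to show $S$ embeds in $U(S)$. Here I would argue via normal forms. By Theorem~\ref{the: bourbaki} applied to the $S_{i}$ (each $S_{i} = X_{i}^{\ast}G$ admits the transversal $F_{i}$ consisting of the chosen coset representatives times atoms, exactly as in Lemma~\ref{le: normal_form}), every element of $S$ has a unique expression $s_{1}\cdots s_{k}g$ with consecutive $s_{j}$ coming from distinct components. On the group side, the classical amalgamated-free-product normal form theorem (as used in Proposition~\ref{prop: universal1} via \cite{LS}) says that an element of $U(S_{1})\ast_{G}\cdots\ast_{G}U(S_{m})$ has a unique reduced expression as an alternating product of coset representatives of the $U(S_{i})$ modulo $G$, followed by an element of $G$; and within each $U(S_{i})$ the positive elements---the images of $S_{i}$---have their own normal form (coset reps times $t_{i}$, by Proposition~\ref{prop: universal1}). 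Pushing the $S$-normal form of an element through $\iota$ and reducing, I would check that distinct $S$-normal forms give distinct reduced $U(S)$-normal forms: the alternation pattern of component indices is preserved because no cancellation between syllables from different $U(S_{i})$ can occur (reduction only merges adjacent syllables from the same factor), and within a single factor $S_{i} \hookrightarrow U(S_{i})$ is injective by Proposition~\ref{prop: universal1}. Hence $\iota\colon S \to U(S)$ is injective.

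The main obstacle I expect is the bookkeeping in the last paragraph: one must be careful that when two adjacent syllables $s_{j}, s_{j+1}$ of the $S$-normal form happen to lie in the *same* component $S_{i}$---which cannot happen in the $S$-side normal form but *can* appear to happen after one re-expresses everything inside a single $U(S_{i})$---the reduction in $U(S)$ does not collapse information, and that the "positive cone" $\iota(S_i)$ inside $U(S_i)$ interacts correctly with the coset transversals chosen for the outer amalgamation. This is essentially a compatibility check between two layers of normal form (HNN inside, amalgam outside), and it is where one genuinely uses that each $U(S_{i})$ is an HNN extension with stable letter $t_{i}$ rather than an arbitrary overgroup of $G$. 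Everything else is a formal consequence of universal properties.
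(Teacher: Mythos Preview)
Your proposal is correct and follows essentially the same route as the paper: a universal-property/pushout argument to identify $U(S)$ with $U(S_{1}) \ast_{G} \cdots \ast_{G} U(S_{m})$, followed by the embedding via normal forms in amalgamated free products together with Proposition~\ref{prop: universal1}. The paper is terser on the embedding step, simply invoking ``the structure of amalgamated free products of groups''; your final bookkeeping worry is unnecessary, since the Bourbaki normal form for $S$ already alternates components and each nontrivial syllable from $F_{i}$ maps to an element of $U(S_{i}) \setminus G$ (it involves a positive power of the stable letter), so the image is already a reduced word in the group amalgam.
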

\begin{proof} We use the fact that $S$ is isomorphic to the amalgamated free product
$S_{1} \ast_{G} S_{2} \ast_{G} \ldots \ast_{G} S_{m}$.
This means that $S$ is the pushout of the set of maps $G \rightarrow S_{i}$.
Let $\theta \colon \: S \rightarrow V$ be any homomorphism to a group $V$.
This induces homomorphisms from $S_{i} \rightarrow V$ which all agree on $G$.
It follows that we have homomorphisms from each $U(S_{i})$ to $V$
and so by the usual properties of pushouts in the category of groups we have a unique homomorphism
from $U(S_{1}) \ast_{G} U(S_{2}) \ast_{G} \ldots \ast_{G} U(S_{m})$ to $V$.
We have therefore established the claim concerning the universal group.
To show that we have an embedding, we use the fact that each group $U(S_{i})$ is embedded in the amalgamated free product.
Thus by Proposition~\ref{prop: universal1}, we have that each monoid $S_{i}$ is embedded in the free product
via its embedding in the group $U(S_{i})$.
It follows from the structure of amalgamated free products of groups that $S$ is also embedded.
\end{proof}

The following is just a restatement of the above in its general form and, as we explained above,
a direct proof of a special case of a theorem proved in \cite{Cohn,V}.

\begin{theorem}\label{the: embedding} Left Rees monoids can be embedded in groups if and only if
they are right cancellative.
\end{theorem}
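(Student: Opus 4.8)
The plan is to treat the two implications separately, drawing almost entirely on the structure theory built up in Sections~4 and~5. The forward implication is immediate: if a left Rees monoid $S$ embeds in a group, then $S$ is a submonoid of a group and hence cancellative, so in particular right cancellative. No further argument is needed here, so all the content lies in the converse.

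For the converse, suppose $S$ is a right cancellative left Rees monoid. Since, by definition (condition (LR1)), every left Rees monoid is left cancellative, the extra hypothesis upgrades $S$ to a cancellative monoid, i.e.\ a Rees monoid. Now apply Theorem~\ref{the: first_decomposition}: $S$ is either irreducible or an amalgamated free product $S_{1} \ast_{G} \cdots \ast_{G} S_{m}$ of irreducible left Rees monoids all sharing the group of units $G$, where each $S_{i} = X_{i}^{\ast}G$ is a submonoid of $S$ and is itself an irreducible left Rees monoid. Because $S$ is cancellative, so is each submonoid $S_{i}$, whence each $S_{i}$ is in fact an irreducible Rees monoid and Proposition~\ref{prop: universal1} applies to it (the irreducible case being the case $m=1$). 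Then Proposition~\ref{prop: universal2} gives both that the universal group of $S$ is $U(S_{1}) \ast_{G} \cdots \ast_{G} U(S_{m})$ and that $S$ is embedded in it. Hence $S$ embeds in a group, and the theorem follows.

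I do not expect a real obstacle at the level of this statement: it is a repackaging of Propositions~\ref{prop: universal1} and~\ref{prop: universal2}, the key observation being precisely that right cancellativity is exactly what is needed to promote ``left Rees monoid'' to ``Rees monoid'', the setting in which those propositions are available. The genuinely substantive input, already in place upstream, is the embedding of an irreducible Rees monoid into its universal group in Proposition~\ref{prop: universal1}: there one must check that the monoid normal forms $(k_{1}x)\cdots(k_{p}x)g$ of Lemma~\ref{le: normal_form} remain distinct inside the group HNN extension $G^{\ast} = \mathrm{Grp}\langle G, t \colon at = t\phi(a)\text{ for all }a\in A\rangle$, which rests on the Lyndon--Schupp normal form theorem for HNN extensions. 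Granting that, the amalgamation step in Proposition~\ref{prop: universal2} is routine, since each factor $U(S_{i})$ embeds in the amalgamated free product and the embeddings are compatible over $G$. So the final theorem is essentially bookkeeping: state the decomposition, observe the components are Rees monoids, invoke the universal-group computation, and read off the embedding.
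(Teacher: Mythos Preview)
Your proposal is correct and follows exactly the route the paper intends: the paper itself presents Theorem~\ref{the: embedding} merely as a restatement of Propositions~\ref{prop: universal1} and~\ref{prop: universal2}, and you have spelled out precisely that reduction, including the observation that right cancellativity upgrades a left Rees monoid to a Rees monoid so that those propositions apply. Your commentary on where the real work lies (the HNN normal form in Proposition~\ref{prop: universal1}) is also accurate.
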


\begin{example} {\em Consider the embedding $\phi \colon n\mathbb{Z} \rightarrow \mathbb{Z}$ given by $\phi (n) = m$.
Then we may construct an irreducible Rees monoid $S = X^{\ast} \bowtie \mathbb{Z}$ where $\left| X \right| = n$.
The universal group is $BS(m,n)$, a {\em Baumslag-Solitar group}.
It is therefore natural to refer to the corresponding Rees monoids as {\em Baumslag-Solitar monoids}.
This term has also been used by Alan Cain \cite{Cain}.
His semigroups are subsemigroups of ours of the form $X^{\ast} \bowtie \mathbb{N}$.}
\end{example}

Every right reversible cancellative monoid can be embedded in a group $G$
in such a way that $G = S^{-1}S$.
Such a group is called a {\em group of left quotients} of $S$
and is uniquely determined up to isomorphism.
See Section~1.10 of \cite{CP} for proofs of all these assertions.

\begin{corollary} 
The universal group of a recurrent Rees monoid is a group of left quotients.
\end{corollary}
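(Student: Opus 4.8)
The plan is to show that the universal group of a recurrent Rees monoid $S$ is, up to isomorphism, a group of left quotients of $S$; since such a group is unique up to isomorphism, this is exactly the assertion. First I would observe that a recurrent Rees monoid is cancellative (every Rees monoid is) and right reversible: by Proposition~\ref{prop: recurrent}, recurrence of a left Rees monoid is equivalent to right reversibility (condition~(4) there). Hence, by the classical theory recalled above from Section~1.10 of \cite{CP}, $S$ possesses a group of left quotients, namely there is an embedding $\iota \colon S \hookrightarrow Q$ with $Q = \iota(S)^{-1}\iota(S)$, and $Q$ is determined up to isomorphism. Identifying $S$ with $\iota(S)$, it then remains to prove $Q \cong U(S)$, and for this I would verify directly that $(Q,\iota)$ satisfies the universal property defining $U(S)$; uniqueness of objects defined by a universal property then yields $Q \cong U(S)$, and since $S$ is embedded in $Q = S^{-1}S$ the universal group is by definition a group of left quotients.

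So let $\theta \colon S \to H$ be any homomorphism to a group $H$. I would define $\bar\theta \colon Q \to H$ by $\bar\theta(s^{-1}t) = \theta(s)^{-1}\theta(t)$ for $s,t \in S$; uniqueness of such an extension is automatic because $Q$ is generated as a group by $\iota(S)$. The content of the argument is that $\bar\theta$ is well defined and multiplicative. For well-definedness, suppose $s^{-1}t = s'^{-1}t'$ in $Q$; using right reversibility I would choose $u,u' \in S$ with $us = u's' =: w$, so that $s^{-1}t = w^{-1}(ut)$ and $s'^{-1}t' = w^{-1}(u't')$ in $Q$, forcing $ut = u't'$ in $Q$ and hence in $S$ because $\iota$ is injective. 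Applying $\theta$ and combining with $\theta(u)\theta(s) = \theta(w) = \theta(u')\theta(s')$ gives $\theta(s)^{-1}\theta(t) = \theta(w)^{-1}\theta(ut) = \theta(w)^{-1}\theta(u't') = \theta(s')^{-1}\theta(t')$, as required. For multiplicativity, given $s^{-1}t$ and $a^{-1}b$, right reversibility provides $p,q \in S$ with $pt = qa$, whence $ta^{-1} = p^{-1}q$ and $s^{-1}t\,a^{-1}b = (ps)^{-1}(qb)$ in $Q$; applying $\bar\theta$ and using $\theta(pt) = \theta(qa)$ shows $\bar\theta((ps)^{-1}(qb)) = \theta(s)^{-1}\theta(t)\theta(a)^{-1}\theta(b) = \bar\theta(s^{-1}t)\bar\theta(a^{-1}b)$. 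Finally $\bar\theta\iota = \theta$ is immediate from the definition.

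The one place where care is needed -- the main obstacle, such as it is -- is precisely this bookkeeping: each verification requires clearing denominators via right reversibility and then transporting the resulting identity from $Q$ back to $S$ using cancellativity, equivalently the injectivity of $\iota$. Once the extension $\bar\theta$ has been constructed, everything else is formal. As a cross-check one could instead note that a recurrent Rees monoid is irreducible by Proposition~\ref{prop: level_transitivity}, so that $U(S)$ is the HNN extension $G^{\ast}$ of Proposition~\ref{prop: universal1} with $S$ embedded in it, and then identify $G^{\ast}$ with $S^{-1}S$ directly from the normal forms in that proposition; but the quotient-monoid computation above is shorter and avoids normal-form manipulations.
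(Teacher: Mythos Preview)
Your argument is correct and is precisely the standard verification that the Ore group of left quotients of a right reversible cancellative monoid satisfies the universal property of $U(S)$; this is exactly what the paper has in mind, since it states the corollary without proof immediately after citing Section~1.10 of \cite{CP}. Your cross-check via Proposition~\ref{prop: universal1} is also valid (recurrence forces irreducibility, so the HNN description applies), but as you say it is unnecessary here.
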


We conclude with one final case.
Let $S$ be a left symmetric Rees monoid.
This means that the functions $g \mapsto g|_{x}$ are bijective for all $x \in X^{\ast}$.

\begin{proposition} Let $S = X^{\ast}G$ be a left symmetric Rees monoid.
Then the universal group of $S$ is isomorphic to $FG(X) \bowtie G$ where $FG(X)$ is the free group on $X$.
\end{proposition}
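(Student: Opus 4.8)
The plan is to present $U(S)$ and $FG(X) \bowtie G$ by one and the same group presentation. The bulk of the work is to extend the given self-similar action of $G$ on the free monoid $X^{\ast}$ to a self-similar action of $G$ on the free group $FG(X)$, and this is precisely where left symmetry enters.

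First I would write down the presentation of $S$. Since $S = X^{\ast} \bowtie G$, each element has a unique normal form $x_{1} \ldots x_{n}g$ with the $x_{i} \in X$ and $g \in G$, and an arbitrary word over $X \sqcup G$ can be brought to this form using the relations $gx = (g \cdot x)(g|_{x})$ ($g \in G$, $x \in X$) together with the multiplication table $R_{G}$ of $G$; since $S$ has underlying set $X^{\ast} \times G$, distinct normal forms are distinct in $S$, and we obtain the monoid presentation $S = \mathrm{Mon}\langle X, G \mid R_{G},\ gx = (g \cdot x)(g|_{x})\ (g \in G, x \in X)\rangle$. As the universal group of a monoid given by a presentation is the group with the same presentation,
$$U(S) = \mathrm{Grp}\langle X, G \mid R_{G},\ gx = (g \cdot x)(g|_{x})\ (g \in G, x \in X)\rangle .$$

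Next I would extend the action to $FG(X)$. For $x \in X$ the restriction map $\psi_{x} \colon G \to G,\ k \mapsto k|_{x}$, is a bijection because $S$ is left symmetric (this is part~(1) of Proposition~\ref{prop: symmetric}). I would then set $g|_{x^{-1}} := \psi_{x}^{-1}(g)$ and $g \cdot x^{-1} := \big(\psi_{x}^{-1}(g) \cdot x\big)^{-1}$ for $x \in X$, and extend to all reduced words over $X \cup X^{-1}$ by the cocycle formulas forced by (SS2), (SS4), (SS6) and (SS8). The real content is to check that this is well defined, i.e.\ compatible with the free reductions $xx^{-1} = 1 = x^{-1}x$, and then that (SS1)--(SS8) hold on $FG(X)$; both reduce to the two identities $\psi_{x}^{-1}(g|_{x}) = g$ and $\psi_{x}(\psi_{x}^{-1}(g)) = g$, which hold precisely because $\psi_{x}$ is a bijection. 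With this done, the Zappa-Sz\'ep product $FG(X) \bowtie G$ is defined, and it is a group since both factors are; moreover it restricts on $X^{\ast} \sqcup G$ to the original data, so $S$ embeds in $FG(X) \bowtie G$.

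Finally I would compare presentations. The group $FG(X) \bowtie G$ is generated by its subgroups $FG(X)$ and $G$ subject to: the (empty) relations of the free group $FG(X)$, the relations $R_{G}$, and the straightening relations $gw = (g \cdot w)(g|_{w})$ for all $g \in G$ and $w \in FG(X)$. An induction on the reduced length of $w$, using (SS4) and (SS6) for the extended action, shows that the straightening relation for $w$ is a consequence of those for the single letters $w \in X$ (the case $w = x^{-1}$ being handled by rearranging the relation $hx = (h \cdot x)(h|_{x})$ with $h = \psi_{x}^{-1}(g)$). Hence $FG(X) \bowtie G$ has exactly the presentation displayed above for $U(S)$, so the identity on $X \sqcup G$ induces an isomorphism $U(S) \to FG(X) \bowtie G$, and chasing generators shows it commutes with $\iota \colon S \to U(S)$ and the inclusion $S \hookrightarrow FG(X) \bowtie G$. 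The main obstacle is the middle step: checking that the extended maps really define a self-similar action of $G$ on $FG(X)$, i.e.\ that they survive free reduction and verify all of (SS1)--(SS8). One could instead bypass presentations and check the universal property of $FG(X) \bowtie G$ directly, since any monoid homomorphism $\theta \colon S \to H$ to a group restricts to a homomorphism on $G$ and extends uniquely to $FG(X)$ on $X$, making $(w,g) \mapsto \theta(w)\theta(g)$ the required homomorphism; but this still needs the extended action.
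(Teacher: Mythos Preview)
Your argument is correct, and it shares with the paper the essential ingredient: the extension of the self-similar action from $X^{\ast}$ to $FG(X)$ via $g|_{x^{-1}} := \psi_{x}^{-1}(g)$ and $g \cdot x^{-1} := (\psi_{x}^{-1}(g) \cdot x)^{-1}$, which is exactly how the paper proceeds (it writes $\rho_{x}$ for your $\psi_{x}$). The verification that this extension is well defined under free reduction and satisfies (SS1)--(SS8) is common to both proofs and is, as you rightly flag, the real work.

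Where you diverge is in the identification of $FG(X) \bowtie G$ with $U(S)$. You argue by presentations: writing down a monoid presentation for $S$, passing to the group presentation for $U(S)$, and then showing that $FG(X) \bowtie G$ satisfies the same presentation (the key reduction being that the straightening relation for $x^{-1}$ is a rearrangement of the one for $x$ with $h = \psi_{x}^{-1}(g)$). The paper instead verifies the universal property directly: given $\theta \colon S \to H$, it sets $\phi(wg) = \alpha(w)\beta(g)$ with $\alpha \colon FG(X) \to H$ the unique extension of $\theta|_{X}$ and $\beta = \theta|_{G}$, and checks that $\phi$ is a homomorphism by reducing to the single identity $\beta(g)\alpha(x^{-1}) = \alpha(g \cdot x^{-1})\beta(g|_{x^{-1}})$. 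This is precisely the alternative you sketch in your last sentence. Your route buys a cleaner conceptual statement (same presentation, hence isomorphic) at the cost of one implicit step you might make explicit: that the surjection from the presented group onto $FG(X) \bowtie G$ is injective, which follows because every element of the presented group can be written $wg$ and such elements map to the distinct pairs $(w,g)$. The paper's route avoids presentations entirely but needs the same computation with $x^{-1}$ you isolated.
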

\begin{proof} The first step is to prove that we may construct the Zappa-Sz\'ep product $FG(X) \bowtie G$
given that we already have the  Zappa-Sz\'ep product $X^{\ast} \bowtie G$.
We denote the map $g \mapsto g|_{x}$ by $\rho_{x}$ which, by assumption, is a bijection.
We first define the function $G \times FG(X) \rightarrow FG(X)$ given by $(g,w) \mapsto g|_{w}$
as follows.
Let $x \in X$.
Define
$$g |_{x^{-1}} = \rho_{x}^{-1}(g).$$
If $w \in FG(X)$ is the reduced string $w = x_{1}^{\epsilon_{1}} \ldots x_{m}^{\epsilon_{m}}$, where the superscripts are either plus one or minus one,
then define
$$g |_{w} = ( \ldots (g |_{x_{1}^{\epsilon_{1}}}) \ldots) |_{x_{m}^{\epsilon_{m}}}.$$ 
Next we define the function $G \times FG(X) \rightarrow FG(X)$ given by $(g,w) \mapsto g \cdot w$.
Let $x \in X$.
Define
$$g \cdot x^{-1} = (g|_{x^{-1}}  \cdot x)^{-1}.$$
We shall now extend this definition to all reduced strings in $FG(X)$ in such a way
that $g \cdot (uv) = (g \cdot u)(g|_{u} \cdot v)$ for all $u,v \in FG(X)$.
This can be proved by means of induction.
Observe that 
$$g \cdot (xx^{-1}) = 1 g \cdot (x^{-1}x) = 1$$
and
$$g |_{xx^{-1}} = g = g|_{x^{-1}x}$$
however they are computed.
This means that our definitions are well-defined when working with elements of the free group.
One can now check that the axioms (SS1)--(SS8), generalized to this setting as explained in Remark~3.4,
all hold; only (SS8) and (SS2) needing any work.
See Theorem~2.4.1 of \cite{AW} for details.
We have therefore constructed the group  $FG(X) \bowtie G$  into which the monoid $S$ is embedded in the obvious way.
It remains to show that it is also the universal group.

Let $\theta \colon S \rightarrow H$ be a homomorphism to the group $H$.
This restricts to a map from $X$ to $H$ and therefore to a unique homomorphism
$\alpha \colon FG(X) \rightarrow H$.
Denote the restriction of $\theta$ to $G$ by $\beta$.
If $wg \in FG(X) \bowtie G$ define $\phi (wg) = \alpha (w) \beta (g)$.
Clearly, $\phi$ agrees with $\theta$ when restricted to $S$.
If we can proved that $\phi$ is a homomorphism it will necessarily be the unique homomorphism
extending $\theta$.

To prove that $\phi$ is a homomorphism,
it is enough to show that
$$\beta (g) \alpha (x^{-1}) = \alpha (g \cdot x^{-1}) \beta (g|_{x^{-1}}).$$
We have that
$$xg^{-1} = \rho_{x}^{-1}(g)^{-1}(\rho_{x}^{-1}(g) \cdot x).$$
Thus
$$\alpha (x) \beta (g)^{-1} = \beta (\rho_{x}^{-1}(g))^{-1}\alpha (\rho_{x}^{-1}(g) \cdot x).$$
Using the fact that $\alpha (x)^{-1} = \alpha (x^{-1})$, we get that
$$\beta (g) \alpha (x^{-1})
=
\alpha (g \cdot x^{-1})
 \beta (g|_{x^{-1}}),$$
as required.
\end{proof}




\end{document}